\def\no{\noindent}
\def\pmatrix{\left(\begin{array}}
\def\endpmatrix{\end{array}\right)}
\newtheorem{theo}{Theorem}
\newtheorem{lem}{Lemma}
\newtheorem{defi}{Definition}
\newtheorem{assum}{Assumption}
\newtheorem{prop}{Proposition}
\newcommand{\abs}[1]{\left\vert#1\right\vert}
\newcommand{\norm}[1]{\left\Vert#1\right\Vert}
\title{Long-time  momentum and actions behaviour of
energy-preserving methods  for semilinear wave equations via spatial
spectral semi-discretizations}
\author{Bin Wang\,
\footnote{School of Mathematical Sciences, Qufu Normal University,
Qufu 273165, P.R. China; Mathematisches Institut, University of
T\"{u}bingen, Auf der Morgenstelle 10, 72076 T\"{u}bingen, Germany.
The research is supported in part by the Alexander von Humboldt
Foundation and by the Natural Science Foundation of Shandong
Province (Outstanding Youth Foundation) under Grant ZR2017JL003.
E-mail:~{\tt wang@na.uni-tuebingen.de} } \and Xinyuan
Wu\thanks{School of Mathematical Sciences, Qufu Normal University,
Qufu 273165, P.R. China; Department of Mathematics, Nanjing
University, Nanjing 210093, P.R. China. The research is supported in
part by the National Natural Science Foundation of China under Grant
11671200. E-mail:~{\tt xywu@nju.edu.cn}} }
\begin{document}
\maketitle

\begin{abstract} As is known that wave equations have physically very important
properties which should be respected by numerical schemes in order
to predict correctly the solution over a long time period. In this
paper, the long-time behaviour of momentum and actions for
energy-preserving methods is analysed for semilinear wave equations.
A full discretisation of wave equations is derived and analysed by
firstly using  a spectral semi-discretisation in space and then by
applying the adopted average vector field (AAVF) method in time.
This numerical scheme can exactly preserve the energy of the
semi-discrete system. The main theme of this paper is to analyse
another important physical property of the scheme. It is shown that
this scheme yields near conservation of a modified momentum and
modified actions over long times. Both the results are rigorously
proved based on the technique of modulated Fourier expansions in two
stages. First a multi-frequency modulated Fourier expansion of the
AAVF method is constructed and then two almost-invariants of the
modulation system are derived.
\medskip

\no{\bf Keywords:} Semilinear wave equations\and Energy-preserving
methods\and Multi-frequency modulated Fourier expansion\and Momentum
and actions conservation

\medskip
\no{\bf MSC:}35L70\and  65M70\and   65M15

\end{abstract}

\section{Introduction}
This paper is concerned with the long-time behaviour of
energy-preserving (EP) methods  when applied to the following
one-dimensional semilinear wave equation (see
\cite{Cohen08-1,Cohen08,Hairer08})
\begin{equation}\label{wave equa}
\begin{array}[c]{ll}
u_{tt}-u_{xx}+\rho u+g(u)=0,\ \ \ t>0,\ \ -\pi\leq x\leq \pi,
\end{array}
\end{equation}
where $g$  is a  nonlinear and smooth real function with $g(0) =
g'(0) = 0$ and $\rho$ is a real number satisfying $\rho> 0$.
Similarly to the refs. \cite{Cohen08-1,Cohen08,Hairer08}, the
initial values $u(\cdot, 0)$ and $u_t (\cdot, 0)$ for this equation
are assumed to be bounded by a small parameter $\epsilon$, which
provides  small initial data  in appropriate Sobolev norms.
Meanwhile,  periodic boundary conditions are   considered in this
paper.

It is noted  that several important quantities are conserved by the
solution  of \eqref{wave equa}.    The total energy
\begin{equation*}
\begin{aligned}
H(u,v)=\frac{1}{ 2 \pi}\int_{-\pi}^{\pi}\Big(
\frac{1}{2}\big(v^2+(\partial_{x}u)^2+\rho
u^2\big)(x)+U(u(x))\Big)dx
\end{aligned}
\end{equation*}
is   exactly preserved along the solution,    where    $v
=\partial_{t}u$ and the potential $U(u)$ is of the form $U'(u) =
g(u)$. The solution of \eqref{wave equa} also conserves the momentum
\begin{equation*}
\begin{aligned}
K(u,v)=\frac{1}{ 2
\pi}\int_{-\pi}^{\pi}\partial_{x}u(x)v(x)dx=-\sum\limits_{k=-\infty}^{\infty}\mathrm{i}ju_{-j}v_{j},
\end{aligned}
\end{equation*}
  where $u_j $ and $v_j$ are  the Fourier coefficients in the series $u(x)
=\sum\limits_{j=-\infty}^{\infty}u_je^{\mathrm{i}jx}$ and
 $v(x)
=\sum\limits_{j=-\infty}^{\infty}v_je^{\mathrm{i}jx}$, respectively.
 The harmonic actions $$ I_j(u,v)=\frac{\omega_j}{ 2}|u_j|^2+
\frac{1}{ 2\omega_j}|v_j|^2 $$ are conserved for the linear wave
equation, where $\omega_j=\sqrt{\rho+j^2}$ for $j\in \mathbb{Z} $.
For the nonlinear case, it has been proved in
\cite{Bambusi03,Cohen08} that the actions remain constant up to
small deviations over a long time period for   smooth and small
initial data and almost all values of $\rho> 0$.

It has now become a common practice that the consideration of
qualitative properties in ordinary and more recently in partial
differential equations is important when designing numerical schemes
in the sense of structure preservation. In recent decades many
numerical methods have been developed and researched for solving
wave equations (see, e.g.
\cite{Cano13,Cano06,Cano14,Gauckler15,Gauckler17-1,Gauckler18,Grimm06,Liu_Iserles_Wu(2017-2),wang-2016,wubook2018}).
 As one important aspect of the analysis, long-time conservation properties of  wave equations or
of some numerical methods  applied to wave equations have been well
studied and we refer the reader to
 \cite{Cohen08,Cohen08-1,Gauckler16,Gauckler12,Hairer08}. All these analyses are achieved by the technique of
modulated Fourier expansions, which was developed by Hairer and
Lubich in \cite{Hairer00}  and   has been frequently used in the
long-term analysis (see, e.g.
\cite{Cohen12,Hairer16,hairer2006,McLachlan14,Sanz-Serna09}). On the
other hand, as an important kind of methods, energy-preserving (EP)
methods have also been the subject of many investigations for wave
equations. EP methods can exactly preserve the energy of the
considered system. With regard to some examples of this topic, we
refer the reader to
\cite{r5,Celledoni12,Li_Wu(JCP2016),JMAA(2015)_Liu_Wu,kai-2017,CiCP(2017)_Mei_Liu_Wu,IMA2018}.
Unfortunately however, it seems that the long-time behaviour of EP
methods in other structure-preserving aspects has  not been studied
for wave equations  in the literature, such as the numerical
conservation of momentum and actions.

The  main contribution of this paper is to rigorously analyse  the
long-time momentum and actions conservations of EP methods for wave
equations.   To our knowledge, this is the first research that
studies the   long-time behaviour of EP methods on wave equations by
using modulated Fourier expansions. We organise the rest of this
paper  as follows. A full discretisation of the semilinear wave
equation \eqref{wave equa} by using spectral semi-discretisation in
space and EP methods in time is given in Sect. \ref{sec Spectral
semi-dis}. The main result of this paper is presented in Sect.
\ref{sec:Main results} and a numerical experiment is carried out to
support  the theoretical result. The proof of the main result is
given in detail in   Sect. \ref{sec:proof}, where the modulated
Fourier expansion of EP methods is constructed and two
almost-invariants of the modulated system are studied. Some
conclusions and further discussions are included in Sect.
\ref{sec:conclusions}.

\section{Full discretisation}\label{sec Spectral semi-dis}

\subsection{Spectral semi-discretisation in space}
We first discretise the wave equation in space by using a spectral
semi-discretisation introduced in  \cite{Cohen08-1,Hairer08}. Choose
equidistant collocation points $x_k = k\pi /M$ (for $k
=-M,-M+1,\ldots,M-1$) for the pseudo-spectral semi-discretisation in
space and consider the   real-valued trigonometric polynomials as an
approximation for the solution of \eqref{wave equa}
\begin{equation}\label{trigo pol}
\begin{array}[c]{ll}
 u^{M}(x,t)=\sum\limits_{|j|\leq M}^{'}
 q_j(t)\mathrm{e}^{\mathrm{i}jx},\quad  v^{M}(x,t)=\sum\limits_{|j|\leq M}^{'}
 p_j(t)\mathrm{e}^{\mathrm{i}jx},
\end{array}
\end{equation}
where  $p_j (t) = \frac{d}{dt}q_j (t)$ and the prime indicates that
the first and last terms in the summation are taken with the factor
$1/2$. We collect all the $q_j $ in a  $2M$-periodic coefficient
vector $\mathbf{q}(t) = (q_j (t))$, which is a solution of the
$2M$-dimensional system of oscillatory ODEs
\begin{equation}
\frac{d^2 \mathbf{q}}{dt^2}+\Omega^2 \mathbf{q}=f(\mathbf{q}), \label{prob}%
\end{equation}
where  $\Omega$ is diagonal with entries $\omega_j$ for $|j|\leq M$
and
$f(\mathbf{q})=-\mathcal{F}_{2M}g(\mathcal{F}^{-1}_{2M}\mathbf{q}),$
and $\mathcal{F}_{2M}$ denotes the discrete Fourier transform
$(\mathcal{F}_{2M}w)_j=\frac{1}{2M}\sum\limits_{k=-M}^{M-1}w_k\mathrm{e}^{-\mathrm{i}jx_k}$
 for $|j|\leq M.$
It is seen that the system \eqref{prob} is   a finite-dimensional
complex Hamiltonian system with the energy
\begin{equation}\label{H}H_M(\mathbf{q},\mathbf{p})=\frac{1}{ 2}\sum\limits_{|j|\leq
M}^{'}\big(
|p_j|^2+\omega_j^2|q_j|^2\big)+V(\mathbf{q}),\end{equation} where
$V(\mathbf{q})=\frac{1}{2M}\sum\limits_{k=-M}^{M-1}U((\mathcal{F}^{-1}_{2M}q)_k).$
The actions (for $|j|\leq M$) and the momentum of \eqref{prob}
 respectively read
$$ I_j(\mathbf{q},\mathbf{p})=\frac{\omega_j}{ 2}|q_j|^2+ \frac{1}{
2\omega_j}|p_j|^2,\ \ K(\mathbf{q},\mathbf{p})=-\sum\limits_{|j|\leq
M}''\mathrm{i}jq_{-j}p_{j},$$
 where the double prime indicates that the first and
last terms in the summation are taken with the factor $1/4$. We are
interested in real approximation \eqref{trigo pol} throughout this
study and thus it holds that
  $q_{-j} = \bar{q}_j$,  $p_{-j} =
\bar{p}_{j}$ and $I_{-j} = I_j$.

 It is noted that the energy \eqref{H} is exactly preserved along the solution of  \eqref{prob}.
 For the momentum  and actions in the
semi-discretisation, the following results have been proved in
\cite{Hairer08}.

\begin{theo} \label{08 theo 1}  (See \cite{Hairer08}.)
 Under the non-resonance condition (9) and    the assumption (16)  given in
 \cite{Hairer08}, it holds that
\begin{equation*}
\begin{aligned}
&\sum\limits_{l=0}^{M}\omega_l^{2s+1}\frac{|I_l(\mathbf{q}(t),\mathbf{p}(t))-I_l(\mathbf{q}(0),\mathbf{p}(0))|}{\epsilon^2}
\leq
C  \epsilon,\\
&\frac{|K(\mathbf{q}(t),\mathbf{p}(t))-K(\mathbf{q}(0),\mathbf{p}(0))|}{\epsilon^2}
\leq C  t \epsilon M^{-s+1},
\end{aligned}
\end{equation*}
where $0\leq t\leq \epsilon^{-N+1}$ and the  constant $C$ is
independent of $\epsilon, M, h$ and $t$.
\end{theo}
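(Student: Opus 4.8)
The plan is to use the technique of modulated Fourier expansions developed by Hairer and Lubich, adapted to the oscillatory system \eqref{prob}. Writing $\bm{\omega}=(\omega_j)_{|j|\le M}$ for the frequency vector, I first seek a modulated Fourier expansion of the solution,
\begin{equation*}
q_j(t)\approx\sum_{\mathbf{k}}z_j^{\mathbf{k}}(\epsilon t)\,e^{\mathrm{i}(\mathbf{k}\cdot\bm{\omega})t},
\end{equation*}
where the sum runs over multi-indices $\mathbf{k}=(k_l)$ with only finitely many nonzero entries and the $z_j^{\mathbf{k}}$ are smooth modulation functions of the slow time $\epsilon t$. Substituting this ansatz into \eqref{prob} and Taylor-expanding the nonlinearity $f(\mathbf{q})$ about zero, I compare the coefficients of each exponential $e^{\mathrm{i}(\mathbf{k}\cdot\bm{\omega})t}$. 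This yields a hierarchy of equations for the $z_j^{\mathbf{k}}$ in which the factor $\omega_j^2-(\mathbf{k}\cdot\bm{\omega})^2$ appears as a prefactor; solving for the non-dominant modulation functions requires this factor to stay bounded away from zero whenever $\mathbf{k}\cdot\bm{\omega}\ne\pm\omega_j$, which is precisely where the non-resonance condition (9) of \cite{Hairer08} enters.

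Next I would bound the modulation functions in the weighted, Sobolev-type norm that carries the factors $\omega_l^{s}$ appearing in the statement. Exploiting the smallness of the initial data and the property $g(0)=g'(0)=0$, an induction on the order $\|\mathbf{k}\|$ shows that each $z_j^{\mathbf{k}}$ has size comparable to $\epsilon^{\|\mathbf{k}\|}$ and that the remainder after truncation is $\mathcal{O}(\epsilon^{N})$ on intervals of length $\mathcal{O}(\epsilon^{-1})$; assumption (16) of \cite{Hairer08} is what makes these bounds hold uniformly in the remaining parameters.

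The heart of the argument is to extract two almost-invariants from the symmetries of the modulation system, which inherits a formal Hamiltonian structure from \eqref{prob}. Invariance of that structure under the rotations $z_j^{\mathbf{k}}\mapsto e^{\mathrm{i}(\mathbf{k}\cdot\bm{\theta})}z_j^{\mathbf{k}}$ yields, via a Noether-type argument, a functional whose time derivative along solutions is of very high order in $\epsilon$, and whose mode-wise parts reproduce the weighted action sum $\sum_l\omega_l^{2s+1}I_l$ up to controlled errors. The invariance under the translation phase shift $z_j^{\mathbf{k}}\mapsto e^{\mathrm{i}j\theta}z_j^{\mathbf{k}}$ produces a second almost-invariant close to the momentum $K$; here the truncation at $|j|\le M$ breaks exact invariance, and the resulting aliasing is what produces the factor $M^{-s+1}$ in the momentum estimate. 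Integrating these small time derivatives shows that the almost-invariants drift only slightly over the long interval---by an amount bounded independently of $t$ in the action case and growing linearly in $t$ at the rate set by the aliasing error in the momentum case---while the bounds of the previous step certify that they differ from the genuine actions and momentum by comparably small amounts; patching $\mathcal{O}(\epsilon^{-N+2})$ consecutive short intervals then propagates the near-conservation to all $t\le\epsilon^{-N+1}$ and gives the two stated estimates after dividing by $\epsilon^{2}$.

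The main obstacle I expect is the interplay between the small denominators $\omega_j^2-(\mathbf{k}\cdot\bm{\omega})^2$ and the large weights $\omega_l^{2s+1}$: the non-resonance condition keeps the denominators from being too small, but one must verify that the resulting estimates survive multiplication by these weights uniformly in the discretisation parameter $M$, which demands a careful, frequency-by-frequency accounting of the near-resonant multi-indices.
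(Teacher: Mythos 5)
Your proposal follows essentially the same route as the actual proof of this statement. Be aware, though, that the paper itself contains no proof of Theorem 1: it is quoted verbatim from \cite{Hairer08}, where the argument lives, and the machinery you describe --- a modulated Fourier expansion in time of the solution of \eqref{prob}, small denominators $\omega_j^2-(\mathbf{k}\cdot\boldsymbol{\omega})^2$ controlled by the non-resonance condition, almost-invariants obtained \`a la Noether from the invariance of the extended potential under $z_j^{\mathbf{k}}\mapsto \mathrm{e}^{\mathrm{i}(\mathbf{k}\cdot\boldsymbol{\mu})\theta}z_j^{\mathbf{k}}$ and $z_j^{\mathbf{k}}\mapsto \mathrm{e}^{\mathrm{i}j\theta}z_j^{\mathbf{k}}$, the aliasing relations $j_1+\cdots+j_m\equiv j \bmod 2M$ breaking the translation symmetry and producing the $M^{-s+1}$ drift, and patching of intervals of length $\epsilon^{-1}$ --- is exactly the Hairer--Lubich scheme, which is also the template the present paper adapts in Section 4 to the AAVF time discretisation.

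One quantitative claim in your sketch is not provable as stated: the induction cannot give $z_j^{\mathbf{k}}$ of size $\epsilon^{\norm{\mathbf{k}}}$. Under the weak non-resonance condition the denominators $\omega_j^2-(\mathbf{k}\cdot\boldsymbol{\omega})^2$ are only bounded below by a quantity of order $\epsilon^{1/2}$ times frequency factors (compare the right-hand side of \eqref{inequa}), so each elimination of a non-dominant mode costs a factor $\epsilon^{-1/2}$, and the correct sizes are $\norm{\zeta^{\mathbf{k}}}_s\leq C\,\epsilon^{[[\mathbf{k}]]}\boldsymbol{\omega}^{-|\mathbf{k}|}$ with $[[\mathbf{k}]]=(\norm{\mathbf{k}}+1)/2$, as in the rescaling of Section 4.5; the weights $\boldsymbol{\omega}^{-|\mathbf{k}|}$ are precisely what makes the estimates uniform in $M$, the obstacle you rightly single out. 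This slip does not derail the overall plan, since the almost-invariant, drift and patching arguments only require the defect after truncation to be $O(\epsilon^{N+1})$ in the weighted norms, and the corrected $\epsilon^{[[\mathbf{k}]]}$-scaling delivers exactly that; but with your stronger claimed scaling the induction would fail to close at the first near-resonant index.
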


\subsection{EP methods in time}

\begin{defi}
\label{defAAVF}  (See \cite{wang2012-PLA,wu2013-JCP}.) For
efficiently solving the  oscillatory system \eqref{prob}, the
adopted average vector field (AAVF)  method has been developed,
which is defined as
\begin{equation}\left\{
\begin{aligned}
 \mathbf{q}_{n+1}&=\phi_0(V)\mathbf{q}_{n}+h\phi_1(V)\mathbf{p}_{n}+h^2\phi_2(V)\displaystyle\int_{0}^{1}f((1-\sigma)\mathbf{q}_{n}+\sigma \mathbf{q}_{n+1})d\sigma,\\
\mathbf{p}_{n+1}&=-h\Omega^2\phi_1(V)\mathbf{q}_{n}+\phi_0(V)\mathbf{p}_{n}+h\phi_1(V)\displaystyle\int_{0}^{1}f((1-\sigma)\mathbf{q}_{n}+\sigma
\mathbf{q}_{n+1})d\sigma,
\end{aligned}\right.\label{AAVFmethod}%
\end{equation}
where $h$ is the stepsize, and \begin{equation}
\phi_{l}(V):=\sum\limits_{k=0}^{\infty}\dfrac{(-1)^{k}V^{k}}{(2k+l)!},\
\ l=0,1,2  \label{Phi01}
\end{equation} are matrix-valued functions of
$V=h^{2}\Omega^2$.
\end{defi}

It follows from  \eqref{Phi01} that
\begin{equation*}
\phi_{0}(V)=\cos(h\Omega),\quad  \phi_{1}(V) = \sin(h\Omega) (h\Omega)^{-1},\quad  \phi_{2}(V) = (I-\cos(h\Omega)) (h\Omega)^{-2}.
\end{equation*}
We note that this method \eqref{AAVFmethod}  reduces to  the well
known  average vector field (AVF)  method when $V=0$.  The following
properties of the AAVF method have been shown in
\cite{wang2012-PLA,wu2013-JCP}.
\begin{prop}\label{prop h} (See \cite{wang2012-PLA,wu2013-JCP}.)
The AAVF method is symmetric and exactly preserves the   energy
\eqref{H}, which means that
$$H_M(\mathbf{q}_{n+1},\mathbf{p}_{n+1})=H_M(\mathbf{q}_{n},\mathbf{p}_{n})\quad \textmd{for}\quad n=0,1,\ldots.$$

\end{prop}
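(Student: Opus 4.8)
The plan is to prove the two assertions separately after recasting one step of \eqref{AAVFmethod} in propagator form. Introducing the averaged nonlinearity
$$
\bar{f}_n:=\int_0^1 f\big((1-\sigma)\mathbf{q}_n+\sigma\mathbf{q}_{n+1}\big)\,\dd\sigma,
$$
the scheme becomes
$$
\pmatrix{c}\mathbf{q}_{n+1}\\ \mathbf{p}_{n+1}\endpmatrix
= R(h)\pmatrix{c}\mathbf{q}_{n}\\ \mathbf{p}_{n}\endpmatrix
+\pmatrix{c}h^2\phi_2(V)\\ h\phi_1(V)\endpmatrix\bar{f}_n ,
$$
where $R(h)=\pmatrix{cc}\cos(h\Omega)&\Omega^{-1}\sin(h\Omega)\\ -\Omega\sin(h\Omega)&\cos(h\Omega)\endpmatrix$ is the exact propagator of the linear oscillator $\ddot{\mathbf{q}}+\Omega^2\mathbf{q}=0$, obtained from the closed forms of $\phi_0,\phi_1,\phi_2$ recorded after \eqref{Phi01}. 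The two facts driving the whole argument are: (i) $R(h)^{-1}=R(-h)$, immediate from the angle-addition identities for $\cos(h\Omega)$ and $\sin(h\Omega)$; and (ii) the discrete-gradient property of the averaged field, used below.

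For symmetry I would exchange $(\mathbf{q}_n,\mathbf{p}_n)\leftrightarrow(\mathbf{q}_{n+1},\mathbf{p}_{n+1})$ and replace $h$ by $-h$, and check that \eqref{AAVFmethod} is reproduced, i.e. $\Phi_{-h}\circ\Phi_h=\mathrm{id}$. Three observations make this routine. First, $\phi_0,\phi_1,\phi_2$ are functions of $V=h^2\Omega^2$ and hence even in $h$, so they are unchanged. Second, the substitution $\sigma\mapsto 1-\sigma$ shows that $\bar{f}_n$ is invariant under swapping its two endpoints, so the averaged term is identical in the exchanged scheme. Third, solving the propagator form for $(\mathbf{q}_n,\mathbf{p}_n)$ and using $R(h)^{-1}=R(-h)$ reduces the claim to the single vector identity
$$
R(-h)\pmatrix{c}h^2\phi_2(V)\\ h\phi_1(V)\endpmatrix=\pmatrix{c}-h^2\phi_2(V)\\ h\phi_1(V)\endpmatrix ,
$$
which I would verify componentwise from $h^2\phi_2(V)=(I-\cos(h\Omega))\Omega^{-2}$ and $h\phi_1(V)=\Omega^{-1}\sin(h\Omega)$ using only $\cos^2+\sin^2=1$. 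This yields exactly the $h\to-h$ form of \eqref{AAVFmethod}.

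Energy preservation rests on the average-vector-field mechanism. The crucial ingredient is that the semi-discrete nonlinearity is a gradient: writing $f(\mathbf{q})=-\nabla V(\mathbf{q})$ for the potential $V(\mathbf{q})=\frac{1}{2M}\sum_k U((\mathcal{F}_{2M}^{-1}\mathbf{q})_k)$ --- which I would justify by the chain rule together with the unitarity (Parseval relation) of $\mathcal{F}_{2M}$ --- the fundamental theorem of calculus along the segment $\sigma\mapsto(1-\sigma)\mathbf{q}_n+\sigma\mathbf{q}_{n+1}$ gives $V(\mathbf{q}_{n+1})-V(\mathbf{q}_n)=-\langle\bar{f}_n,\,\mathbf{q}_{n+1}-\mathbf{q}_n\rangle$ in the weighted Hermitian inner product associated with $H_M$. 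It then remains to show that the change in the quadratic part $\frac{1}{2}\sum'(|p_j|^2+\omega_j^2|q_j|^2)$ equals $+\langle\bar{f}_n,\mathbf{q}_{n+1}-\mathbf{q}_n\rangle$, so that the total change in $H_M$ cancels. I would prove this mode by mode: inserting the scalar update for $(q_j,p_j)$ into $\omega_j^2|q_{n+1,j}|^2+|p_{n+1,j}|^2$, the pure rotation part of $R(h)$ preserves $\omega_j^2|q_j|^2+|p_j|^2$, and collecting the terms linear and quadratic in $\bar{f}_{n,j}$ and simplifying with $\cos^2+\sin^2=1$ produces precisely $2\,\Re\big(\bar{f}_{n,j}\,\overline{(q_{n+1,j}-q_{n,j})}\big)$. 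Summing over $|j|\le M$ with the prime weights and halving gives the required identity.

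The main obstacle I anticipate is the energy step rather than the symmetry algebra, and it splits into two points that must be handled carefully. First, one must fix the correct weighted Hermitian inner product in which $f=-\nabla V$ holds, so that the averaged fundamental-theorem-of-calculus identity is literally true for the real trigonometric polynomials subject to $q_{-j}=\bar{q}_j$; this is exactly where unitarity of the discrete Fourier transform is essential. Second, the mode-by-mode cancellation between the kinetic/linear-potential change and the potential change is an exact trigonometric identity, so the bookkeeping of the $(I-\cos(h\Omega))\Omega^{-2}$ and $\Omega^{-1}\sin(h\Omega)$ coefficients must be carried out precisely, ensuring that both the $\bar{f}_{n,j}$-linear and $\bar{f}_{n,j}$-quadratic contributions vanish. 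Once these are in place, $H_M(\mathbf{q}_{n+1},\mathbf{p}_{n+1})=H_M(\mathbf{q}_n,\mathbf{p}_n)$ follows immediately.
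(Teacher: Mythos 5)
Your proposal is correct. Note that the paper itself does not prove Proposition \ref{prop h} at all --- it imports the statement from \cite{wang2012-PLA,wu2013-JCP} --- so the relevant comparison is with the proof in those references, and your argument is essentially that standard one: symmetry by checking the method coincides with its adjoint ($n\leftrightarrow n+1$, $h\to-h$, using evenness of $\phi_l(h^2\Omega^2)$ and the $\sigma\mapsto 1-\sigma$ invariance of the averaged field), and energy conservation by the average-vector-field discrete-gradient mechanism combined with the trigonometric identities among $\phi_0,\phi_1,\phi_2$. The only cosmetic difference is that the cited papers carry out the quadratic-part cancellation globally in matrix form, using identities such as $\phi_0(V)^2+V\phi_1(V)^2=I$, whereas you diagonalize and do it mode by mode via the rotation $R(h)$; the computations are equivalent, and your key identity $R(-h)\,(h^2\phi_2,\;h\phi_1)^{\intercal}=(-h^2\phi_2,\;h\phi_1)^{\intercal}$ checks out. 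Your flagged caveat about the gradient structure $f=-\nabla V$ under the primed summation convention is the right one to worry about: with $q_{-M}=q_M$ identified by $2M$-periodicity, the two half-weighted endpoint terms merge, $\partial V/\partial q_{-j}=(\mathcal{F}_{2M}\,g(\mathcal{F}_{2M}^{-1}\mathbf{q}))_j=-f_j(\mathbf{q})$, and the per-mode quantity $2\,\Re\big(\bar{f}_{n,j}\,\overline{(q_{n+1,j}-q_{n,j})}\big)$ sums with the prime weights to exactly cancel $V(\mathbf{q}_{n+1})-V(\mathbf{q}_n)$, so the proof closes as you describe.
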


\noindent Clearly, the energy-preserving AAVF method does not
exclude symmetry structure, and as is known that preserving the
energy and symmetries of the systems at the discrete level is
important for geometry integrators.

\section{Main result and numerical experiment} \label{sec:Main results}

 \subsection{Notations} In this paper, we take the following   notations, which have been used in \cite{Cohen08-1}.
  For  sequences of integers
$\mathbf{k} = (k_l)_{l=0}^{M}$,      $\boldsymbol{\omega} =
(\omega_l)_{l=0}^{M}$, and a real $\sigma$, denote
\begin{equation*}
\begin{aligned}
|\mathbf{k}| = (|k_l|)_{l=0}^{\infty},\quad
\norm{\mathbf{k}}=\sum\limits_{l=0}^{M}|k_l|, \quad \
\mathbf{k}\cdot
\boldsymbol{\omega}=\sum\limits_{l=0}^{M}k_l\omega_l, \quad \
\boldsymbol{\omega}^{\sigma |\mathbf{k}|}=\Pi_{l=0}^{M}
\omega_l^{\sigma |k_l|}.
\end{aligned}
\end{equation*}
 Denote  by  $\langle j\rangle$  the unit coordinate vector $(0, \ldots , 0, 1, 0,
\ldots,0)^{\intercal}$  with the only entry $1$ at the $|j|$-th
position. For $s\in \mathbb{R}^+$,  the Sobolev space of
$2M$-periodic sequences $\mathbf{q}=(q_j)$ endowed with the weighted
norm $\norm{\mathbf{q}}_s=\Big(\sum\limits_{|j|\leq
M}''\omega_j^{2s} |q_j|^2\Big)^{1/2}$ is   denoted by $H^{s}$.
Moreover, we  set $ [[\mathbf{k}]]=\left\{\begin{aligned} &
(\norm{\mathbf{k}}+1)/2,\quad
\mathbf{k}\neq\textbf{0},\\
&3/2,\qquad\quad\quad \ \  \mathbf{k}=\textbf{0}.
\end{aligned}\right.
$

 \subsection{Main result}
Before presenting the main result of this paper,    the following
assumptions are needed (see \cite{Cohen08-1}).
\begin{assum}\label{ass}
 $\bullet$ It is assumed that
the initial values  of \eqref{prob} are bounded by
\begin{equation}\label{initi cond}
\big(\norm{\mathbf{q}(0)}_{s+1}^2+\norm{\mathbf{p}(0)}_{s}^2\big)^{1/2}\leq
\epsilon
\end{equation}
with a small parameter $\epsilon>0$.

 $\bullet$   The non-resonance condition is
 considered for a given stepsize $h$:
\begin{equation}
\abs{\sin\big(\frac{h}{2}(\omega_j-\mathbf{k}\cdot\boldsymbol{\omega})\big)\cdot
\sin\big(\frac{h}{2}(\omega_j+\mathbf{k}\cdot\boldsymbol{\omega})\big)}
\geq \epsilon^{1/2}h^2(
 \omega_j+|\mathbf{k} \cdot\boldsymbol{\omega}|).
 \label{inequa}%
\end{equation}
  If this is violated, we   define a  set of
near-resonant indices
\begin{equation}
\mathcal{R}_{\epsilon,h}=\{(j, \mathbf{k}):|j|\leq M,\
\norm{\mathbf{k}}\leq2N,\ \ \mathbf{k}\neq\pm\langle j\rangle,\
\textmd{not}\ \textmd{satisfying} \ \eqref{inequa}\},
 \label{near-resonant R}%
\end{equation}
where   $N \geq 1$ is    the truncation number of the expansion
\eqref{MFE-AAVF} which will be presented in the next section. We
make the following assumption for this set.
 Suppose that there exist  $\sigma
> 0$ and a constant $C_0$ such that
\begin{equation}
\sup_{(j, \mathbf{k})\in\mathcal{R}_{\epsilon,h}}
\frac{\omega_j^{\sigma}}{\boldsymbol{\omega}^{\sigma
|\mathbf{k}|}}\epsilon^{\norm{\mathbf{k}}/2}\leq C_0 \epsilon^N.
 \label{non-resonance cond}%
\end{equation}

 $\bullet$ We require the following numerical non-resonance
 condition
\begin{equation}
|\sin(h\omega_j)|\geq h \epsilon^{1/2}\ \ for \ \ |j|\leq M.
 \label{further-non-res cond}
\end{equation}

 $\bullet$ For a positive constant $c > 0$, consider another non-resonance condition
\begin{equation}
\begin{aligned}
&|\sin(\frac{h}{2}(\omega_j-\mathbf{k}\cdot\boldsymbol{\omega}))\cdot
\sin(\frac{h}{2}(\omega_j+\mathbf{k}\cdot\boldsymbol{\omega}))| \geq
c h^2 |2\phi_2
(h^2\omega^2_j)|\\
&\textmd{for}\ (j, \mathbf{k}) \ \textmd{of the form}\ j=j_1+j_2\
\textmd{and}\ \mathbf{k}=\pm\langle j_1\rangle\pm\langle j_2\rangle,
 \end{aligned}
 \label{another-non-res cond}%
\end{equation}
 which leads to improved
conservation estimates.

\end{assum}

 We are now in a position to present the main result of this paper.
\begin{theo}\label{main theo} Define the following modified   momentum and
actions, respectively
\begin{equation*}
\begin{aligned}
&\hat{I}_j(\mathbf{q},\mathbf{p})= \frac{ \cos(\frac{1}{2}h
\omega_j)}{ \textmd{sinc}(\frac{1}{2}h
\omega_j)}I_j(\mathbf{q},\mathbf{p}) ,\ \
\hat{K}(\mathbf{q},\mathbf{p})=-\sum\limits_{|j|\leq
M}''\mathrm{i}j\frac{ \cos(\frac{1}{2}h \omega_j)}{
\textmd{sinc}(\frac{1}{2}h \omega_j)}q_{-j}p_{j}.
\end{aligned}
\end{equation*}
Suppose that the  conditions  of Assumptions \ref{ass} are true with
$s \geq \sigma + 1$. Then for the AAVF method \eqref{AAVFmethod} and
 $0\leq t=nh\leq \epsilon^{-N+1}$,
 the following near-conservation estimates of the modified momentum
and actions
\begin{equation*}
\begin{aligned}
&\sum\limits_{l=0}^{M}\omega_l^{2s+1}\frac{|\hat{I}_l(\mathbf{q}_{n},\mathbf{p}_{n})-\hat{I}_l(\mathbf{q}_0,\mathbf{p}_0)|}{\epsilon^2}
\leq
C  \epsilon,\\
&\frac{|\hat{K}(\mathbf{q}_{n},\mathbf{p}_{n})-\hat{K}(\mathbf{q}_0,\mathbf{p}_0)|}{\epsilon^2}
\leq C (\epsilon+M^{-s}+\epsilon t M^{-s+1})
\end{aligned}
\end{equation*}
hold with a  constant $C$  which depends on $s, N,$ and $C_0$, but
not on    $\epsilon, M, h$ and the time $t$. If
 \eqref{another-non-res cond} is not satisfied, then the
bound $C \epsilon$ is weakened to $C \epsilon^{1/2}$.
\end{theo}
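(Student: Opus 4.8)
The plan is to follow the two–stage modulated Fourier expansion (MFE) strategy of Hairer and Lubich, adapted to the discrete AAVF flow \eqref{AAVFmethod}. First I would represent the numerical solution at $t=nh$ as a short modulated trigonometric sum
\begin{equation*}
\mathbf{q}_n \approx \sum_{\norm{\mathbf{k}}\le 2N} \mathbf{z}^{\mathbf{k}}(t)\,\mathrm{e}^{\mathrm{i}(\mathbf{k}\cdot\boldsymbol{\omega})t},\qquad t=nh,
\end{equation*}
where each coefficient $\mathbf{z}^{\mathbf{k}}=(z_j^{\mathbf{k}})$ is a smooth function of the slow time $t$, expected to be of size $\epsilon^{[[\mathbf{k}]]}$ in the weighted norm. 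Since the shift $n\mapsto n\pm1$ multiplies the $\mathbf{k}$-th mode by $\mathrm{e}^{\pm\mathrm{i}(\mathbf{k}\cdot\boldsymbol{\omega})h}$, eliminating $\mathbf{p}_n$ from \eqref{AAVFmethod} yields a symmetric two–step recursion whose linear part $\mathbf{q}_{n+1}-2\cos(h\Omega)\mathbf{q}_n+\mathbf{q}_{n-1}$, applied to the ansatz, produces on the $(j,\mathbf{k})$ component the factor
\begin{equation*}
2\cos\!\big(h\,\mathbf{k}\cdot\boldsymbol{\omega}\big)-2\cos(h\omega_j)
=4\sin\!\Big(\tfrac{h}{2}(\omega_j-\mathbf{k}\cdot\boldsymbol{\omega})\Big)\sin\!\Big(\tfrac{h}{2}(\omega_j+\mathbf{k}\cdot\boldsymbol{\omega})\Big),
\end{equation*}
which is precisely the quantity controlled by the non-resonance conditions \eqref{inequa} and \eqref{another-non-res cond}.

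Stage one is the construction and estimation of the coefficients. Matching equal powers of $\mathrm{e}^{\mathrm{i}(\mathbf{k}\cdot\boldsymbol{\omega})t}$ and Taylor–expanding the averaged nonlinearity $\int_0^1 f((1-\sigma)\mathbf{q}_n+\sigma\mathbf{q}_{n+1})\,d\sigma$ about the dominant modes, I would split the relations into the near-diagonal ones ($\mathbf{k}=\pm\langle j\rangle$), where the sine product vanishes to leading order and, after rescaling, the relations become second–order differential equations for the leading amplitudes $z_j^{\pm\langle j\rangle}$, and the remaining ones, which are solved algebraically because the sine product is bounded below by \eqref{inequa}. Near-resonant pairs $(j,\mathbf{k})\in\mathcal{R}_{\epsilon,h}$ are treated separately: the hypothesis \eqref{non-resonance cond} ensures the corresponding coefficients stay $O(\epsilon^N)$. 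Solving the hierarchy order by order and summing the bounds in the weighted $H^{s+1}$ norm — where the assumption $s\ge\sigma+1$ absorbs the loss of one power of $\boldsymbol{\omega}$ per level — yields the coefficient estimates and a defect that, propagated through the symmetric energy–preserving recursion, shows the expansion matches $\mathbf{q}_n$ up to high order over one expansion window.

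Stage two extracts the two almost-invariants from the formal symmetries of the modulation system. Multiplying the modulation equations by the appropriate conjugate amplitudes and summing produces total derivatives plus small remainders, reflecting invariance under the phase rotations $z_j^{\mathbf{k}}\mapsto\mathrm{e}^{\mathrm{i}\theta k_l}z_j^{\mathbf{k}}$ (one per frequency, giving the actions) and under the single translation $z_j^{\mathbf{k}}\mapsto\mathrm{e}^{\mathrm{i}j\theta}z_j^{\mathbf{k}}$ (giving the momentum). Evaluating these formal invariants on the leading amplitudes $z_j^{\pm\langle j\rangle}$ reproduces $\hat{I}_j$ and $\hat{K}$; the modification factor $\cos(\tfrac12 h\omega_j)/\sinc(\tfrac12 h\omega_j)$ is exactly the weight that appears when $z_j^{\langle j\rangle}$ is expressed through the numerical pair $(q_{j,n},p_{j,n})$ via the trigonometric coefficients $\phi_0,\phi_1$ of the scheme. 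Because these quantities are conserved by the modulation system up to high order in $\epsilon$ and agree with the physical quantities to the same order, patching the expansions across the $O(\epsilon^{-N})$ intervals tiling $0\le t\le\epsilon^{-N+1}$ keeps the accumulated drift within the relative $O(\epsilon)$ bound. For the momentum the extra factor $j$ in $\hat{K}$, which can be as large as $M$, prevents cancellation of the spatial truncation and aliasing errors of the pseudo-spectral discretisation, producing the additional terms $M^{-s}$ and $\epsilon t M^{-s+1}$.

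I expect the main obstacle to lie in stage one's bookkeeping under the near-resonance set $\mathcal{R}_{\epsilon,h}$: one must verify that dividing by the possibly small sine products never destroys the $\epsilon^{[[\mathbf{k}]]}$ size of the coefficients and that the per-level loss of $\boldsymbol{\omega}$-powers remains summable in $H^{s+1}$, which is where \eqref{non-resonance cond}, \eqref{further-non-res cond} and $s\ge\sigma+1$ are all consumed. The secondary delicate point is that condition \eqref{another-non-res cond}, governing the quadratic combinations $\mathbf{k}=\pm\langle j_1\rangle\pm\langle j_2\rangle$, is precisely what upgrades the final estimate from $C\epsilon^{1/2}$ to $C\epsilon$, so I would track carefully where those quadratic resonances enter the drift of the almost-invariants.
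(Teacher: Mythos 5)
Your proposal follows essentially the same route as the paper: a truncated multi-frequency modulated Fourier expansion of the two-step reformulation $\mathbf{q}_{n+1}-2\cos(h\Omega)\mathbf{q}_{n}+\mathbf{q}_{n-1}$, with the sine product $4\sin\big(\tfrac{h}{2}(\omega_j-\mathbf{k}\cdot\boldsymbol{\omega})\big)\sin\big(\tfrac{h}{2}(\omega_j+\mathbf{k}\cdot\boldsymbol{\omega})\big)$ governing the split into differential (diagonal) and algebraic (non-diagonal) modulation equations, two almost-invariants obtained from the invariance of the extended potential under the phase rotations $S_{\boldsymbol{\mu}}(\theta)$ and the translation $T(\theta)$ (the latter broken only by the aliasing terms $\mathcal{U}_l$, $l\neq 0$, which produce the $M^{-s}$ and $\epsilon t M^{-s+1}$ contributions exactly as you predict), and patching of windows of length $\epsilon^{-1}$. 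The only slips are cosmetic: the diagonal modulation equations are solved as first-order ODEs — the dominant term is $\pm 2\mathrm{i}\sin(\tfrac{1}{2}h\omega_j)h\epsilon\,\dot{\zeta}_j^{\pm\langle j\rangle}$, with higher derivatives deferred to the previous iterate of the paper's reverse Picard iteration — rather than second-order ones, and the near-resonant coefficients are simply set to zero with \eqref{non-resonance cond} bounding the resulting defect, rather than constructed at size $\mathcal{O}(\epsilon^N)$.
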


The proof of this theorem will be  presented in detail in Section
\ref{sec:proof} by using the technique of multi-frequency modulated
Fourier expansions. It can be concluded from this theorem that the
AAVF method  has a near-conservation of   a modified momentum and
modified actions over long times.  Although the result cannot be
obtained for the momentum $K$ and actions $I_j$,  we note that  $K$
and   $I_j$ are  no longer exactly conserved quantities in the
semi-discretisation, which is seen from Theorem \ref{08 theo 1}.
Moreover, it will be shown in the next subsection that in comparison
with the near-conservation of  $K$ and  $I_j$, the modified momentum
and modified actions are preserved better by AAVF method. This
soundly supports the result of Theorem \ref{main theo}.

We have noticed that the authors in \cite{Cohen08-1} analysed the
long-time behaviour of a symmetric and symplectic trigonometric
integrator for solving  wave equations. It was shown in
\cite{Cohen08-1} that this integrator has a near-conservation   of
energy, momentum and actions in numerical discretisations. It is
noted that the method studied   in \cite{Cohen08-1} cannot preserve
the  energy \eqref{H} exactly. However, from Proposition \ref{prop
h} and Theorem \ref{main theo}, it follows that the AAVF method not
only preserves  the energy \eqref{H} exactly but also has a
near-conservation of modified momentum and actions over long times.

 \subsection{Numerical experiment} \begin{figure}[ptb]
\centering\tabcolsep=2mm
\begin{tabular}
[l]{lll}%
\includegraphics[width=12cm,height=4cm]{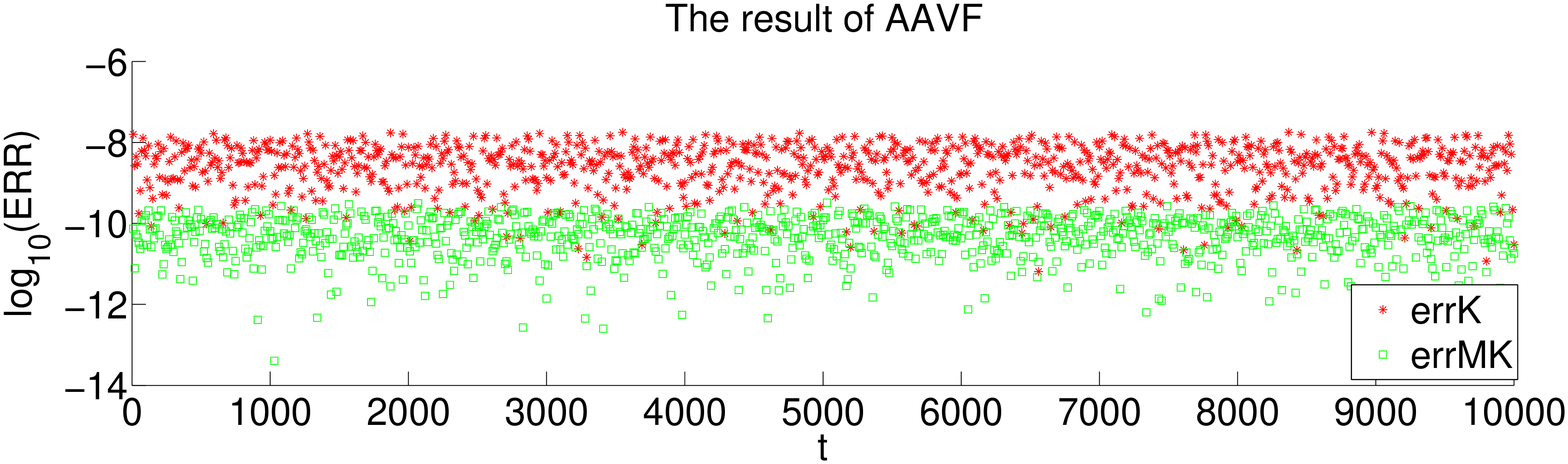}\\
\includegraphics[width=12cm,height=4cm]{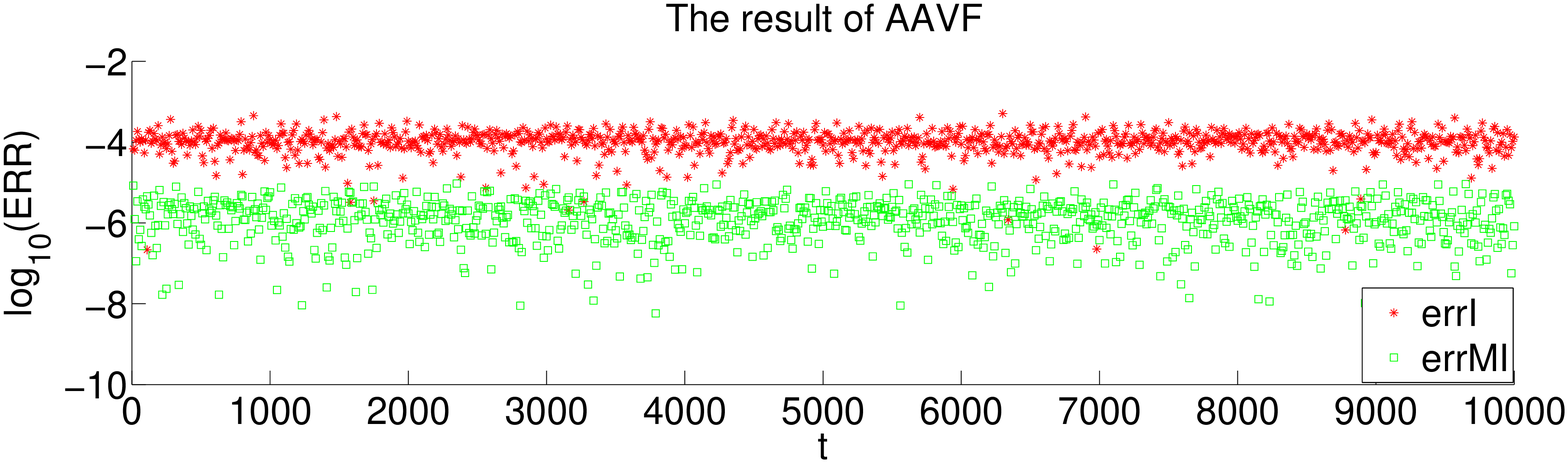}
\end{tabular}
\caption{The logarithm of the  errors against $t$.}%
\label{fig0}%
\end{figure}
We now carry out  a numerical experiment to show the numerical
behaviour of AAVF
 method. The semilinear wave equation \eqref{wave equa} with  $\rho=0.5$ and $g(u)=-u^2$ is considered (see \cite{Cohen08-1}) and its initial conditions
 are given by
$u(x,0)=0.1\big(\frac{x}{\pi}-1\big)^3\big(\frac{x}{\pi}+1\big)^2,\
\partial_t u(x,0)=0.01\frac{x}{\pi}\big(\frac{x}{\pi}-1\big)\big(\frac{x}{\pi}+1\big)^2$
for $-\pi\leq x\leq \pi$.
 We consider the spatial discretisation  with
the  dimension $2M = 2^7$ and consider    applying
 midpoint rule    to the integral appearing in   the AAVF formula
\eqref{AAVFmethod}. It can be checked that the assumption
\eqref{initi cond} holds for $s=2$. This problem is solved with  the
stepsize $h = 0.05$   on $[0,10000]$ and  the relative errors of
momentum/modified momentum and actions/modified actions against $t$
are shown in Figure \ref{fig0}.   Here we use the following
notations in the figures:
$\textmd{errK}=\frac{|K(\mathbf{q}_{n},\mathbf{p}_{n})-K(\mathbf{q}_0,\mathbf{p}_0)|}{|K(\mathbf{q}_0,\mathbf{p}_0)|},\
\textmd{errMK}=\frac{|\hat{K}(\mathbf{q}_{n},\mathbf{p}_{n})-\hat{K}(\mathbf{q}_0,\mathbf{p}_0)|}{|\hat{K}(\mathbf{q}_0,\mathbf{p}_0)|}$
and $\textmd{errI}=\frac{\sum\limits_{l=0}^{M}\omega_l^{5}
|I_l(\mathbf{q}_{n},\mathbf{p}_{n})-I_l(\mathbf{q}_0,\mathbf{p}_0)|}{\sum\limits_{l=0}^{M}\omega_l^{5}
|I_l(\mathbf{q}_0,\mathbf{p}_0)|}, \
\textmd{errMI}=\frac{\sum\limits_{l=0}^{M}\omega_l^{5}
|\hat{I}_l(\mathbf{q}_{n},\mathbf{p}_{n})-\hat{I}_l(\mathbf{q}_0,\mathbf{p}_0)|}{\sum\limits_{l=0}^{M}\omega_l^{5}
|\hat{I}_l(\mathbf{q}_0,\mathbf{p}_0)|}$. It follows from the
results that the modified momentum and modified actions are better
conserved than the momentum and actions, which supports the results
given in Theorem \ref{main theo}.

\section{The proof of the main result} \label{sec:proof}
\subsection{Preliminaries for the  analysis}
Define   five operators   by
\begin{equation*}
\begin{aligned}L_1^{\mathbf{k}}:&=\mathrm{e}^{\mathrm{i}(\mathbf{k} \cdot\boldsymbol{\omega})
h}\mathrm{e}^{\epsilon
hD}-2\cos(h\Omega)+\mathrm{e}^{-\mathrm{i}(\mathbf{k}
\cdot\boldsymbol{\omega})
h}\mathrm{e}^{-\epsilon hD},\\
L_2^{\mathbf{k}}:&=\mathrm{e}^{\frac{1}{2}\mathrm{i}(\mathbf{k}
\cdot\boldsymbol{\omega}) h}\mathrm{e}^{\frac{1}{2}\epsilon
hD}+\mathrm{e}^{-\frac{1}{2}\mathrm{i}(\mathbf{k}
\cdot\boldsymbol{\omega})
h}\mathrm{e}^{-\frac{1}{2}\epsilon hD},\\
L_3^{\mathbf{k}}:&=(\mathrm{e}^{\mathrm{i}(\mathbf{k}
\cdot\boldsymbol{\omega}) h}\mathrm{e}^{\epsilon
hD}-1)(\mathrm{e}^{\mathrm{i}(\mathbf{k} \cdot\boldsymbol{\omega})
h}\mathrm{e}^{\epsilon hD}+1)^{-1},\\
L_4^{\mathbf{k}}(\sigma):&=
(1-\sigma)\mathrm{e}^{-\frac{1}{2}\mathrm{i}(\mathbf{k}
\cdot\boldsymbol{\omega}) h}\mathrm{e}^{-
 \frac{h}{2}\epsilon D} +\sigma\mathrm{e}^{\frac{1}{2}\mathrm{i}(\mathbf{k} \cdot\boldsymbol{\omega})
h}\mathrm{e}^{
 \frac{h}{2}\epsilon D},\\
 L^{\mathbf{k}}:&=(L^{\mathbf{k}}_2)^{-1} L^{\mathbf{k}}_1,\end{aligned}
\end{equation*} where $D$ is the
differential operator (see \cite{hairer2006}). For these operators,
the following results are clear.
\begin{prop}\label{lhd pro}
The operator $L^{\mathbf{k}}$ can be expressed in   Taylor
expansions as follows:
\begin{equation}\label{L expansion}
\begin{aligned}&L^{\pm\langle j\rangle}(hD)\alpha_j^{\pm\langle j\rangle}(\epsilon t)= \pm2 \textmd{i}
\epsilon h  s_{\langle j\rangle} \dot{\alpha}_j^{\pm\langle
j\rangle}(\epsilon t)+\frac{1}{2} \epsilon^2
h^2\sec(\frac{1}{2}h\omega_j)\ddot{\alpha}_j^{\pm\langle
j\rangle}(\epsilon t)+\cdots,\\
 &L^{\mathbf{k}}(hD)\alpha_j^{\mathbf{k}}(\epsilon t)=  2 \frac{s_{\langle j\rangle+\mathbf{k}}s_{\langle
j\rangle-k}}{c_{\mathbf{k}}}\alpha_j^{\mathbf{k}}(\epsilon
t)+\textmd{i} \epsilon h
 \frac{s_{\mathbf{k}}(1+c_{\langle j\rangle+\mathbf{k}}c_{\langle
j\rangle-\mathbf{k}})}{c_{\mathbf{k}}^2}\dot{\alpha}_j^{\mathbf{k}}(\epsilon t)+\cdots,\\
\end{aligned}
\end{equation}
for $\abs{j}>0$ and $\mathbf{k}\neq \pm\langle j\rangle$, where
$s_{\mathbf{k}} =\sin(
\frac{h}{2}(\mathbf{k}\cdot\boldsymbol{\omega}))$ and
$c_{\mathbf{k}} =\cos(
\frac{h}{2}(\mathbf{k}\cdot\boldsymbol{\omega}))$. The Taylor
expansions of $ L_3^{\mathbf{k}}$ are given by
\begin{equation*}
\begin{aligned}
 &L_3^{\mathbf{k}}\alpha_j^{\mathbf{k}}(\epsilon t)=   \textmd{i}\tan\big(\frac{1}{2}h(\mathbf{k} \cdot\boldsymbol{\omega})\big)\alpha_j^{\mathbf{k}}(\epsilon t)+
\frac{h\epsilon}{1+c_{2 k}} \dot{\alpha}_j^{\mathbf{k}}(\epsilon t)+\cdots,\\
\end{aligned}
\end{equation*}
for $\abs{j}>0$ and $\norm{\mathbf{k}}\leq 2N$. Moreover, for the
operator $L_4^{\mathbf{k}}(\sigma)$ with $\norm{\mathbf{k}}\leq 2N$,
we have
\begin{equation*}
\begin{aligned}
L_4^{\mathbf{k}}(\frac{1}{2})=\cos\big(\frac{h(\mathbf{k} \cdot
\boldsymbol{\omega})}{2}\big)+\frac{1}{2}\sin\big(\frac{h(\mathbf{k}
\cdot \boldsymbol{\omega})}{2}\big)(\textmd{i} h\epsilon D)+\cdots.
\end{aligned}
\end{equation*}
\end{prop}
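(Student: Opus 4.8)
\proof
The plan is to treat each operator $L_i^{\mathbf{k}}$ as a formal power series in the small quantity $\epsilon h$, exploiting that the only source of $\epsilon$-dependence is the shift operator, which acts on a slow-time modulation function by $\mathrm{e}^{c\epsilon hD}\alpha(\epsilon t)=\alpha(\epsilon t+c\epsilon h)=\sum_{m\geq0}\frac{(c\epsilon h)^m}{m!}\alpha^{(m)}(\epsilon t)$. Substituting these Taylor series into the definitions of $L_1^{\mathbf{k}},L_2^{\mathbf{k}},L_3^{\mathbf{k}},L_4^{\mathbf{k}}$ and collecting powers of $\epsilon h$ reduces every claim to two routine ingredients: first, an identification of scalar coefficients obtained by combining the purely oscillatory factors $\mathrm{e}^{\pm\mathrm{i}(\mathbf{k}\cdot\boldsymbol{\omega})h}$ through Euler's formula, and second, a sequence of elementary trigonometric simplifications written in the variables $s_{\mathbf{k}},c_{\mathbf{k}},s_{\langle j\rangle\pm\mathbf{k}},c_{\langle j\rangle\pm\mathbf{k}}$.

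For $L_4^{\mathbf{k}}(\tfrac12)$ the statement is immediate: expanding the two exponentials to first order and using $\mathrm{e}^{\mathrm{i}\phi/2}\pm\mathrm{e}^{-\mathrm{i}\phi/2}$ with $\phi=(\mathbf{k}\cdot\boldsymbol{\omega})h$ gives the cosine at order $0$ and the sine (times $\mathrm{i}h\epsilon D$) at order $1$. For $L_3^{\mathbf{k}}$ I would set $z=\mathrm{e}^{\mathrm{i}(\mathbf{k}\cdot\boldsymbol{\omega})h}\mathrm{e}^{\epsilon hD}$, write $L_3^{\mathbf{k}}=(z-1)(z+1)^{-1}$, and expand $z=z_0(1+\epsilon hD+\cdots)$ with $z_0=\mathrm{e}^{\mathrm{i}(\mathbf{k}\cdot\boldsymbol{\omega})h}$; the Neumann expansion of $(z+1)^{-1}$ yields $(z_0-1)(z_0+1)^{-1}=\mathrm{i}\tan(\tfrac12 h(\mathbf{k}\cdot\boldsymbol{\omega}))$ at order $0$ and $2z_0(z_0+1)^{-2}\epsilon hD$ at order $1$, the latter simplifying to $\epsilon h\,(1+c_{2\mathbf{k}})^{-1}D$ via $(z_0+1)^2=\mathrm{e}^{\mathrm{i}\phi}\,4c_{\mathbf{k}}^2$ and $1+c_{2\mathbf{k}}=2c_{\mathbf{k}}^2$.

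The operator $L^{\mathbf{k}}=(L_2^{\mathbf{k}})^{-1}L_1^{\mathbf{k}}$ requires the most care. I would expand $L_1^{\mathbf{k}}$ and $L_2^{\mathbf{k}}$ separately; the zeroth-order part of $L_1^{\mathbf{k}}$ is $2\cos((\mathbf{k}\cdot\boldsymbol{\omega})h)-2\cos(h\omega_j)$, which the product-to-sum identity turns into $4s_{\langle j\rangle+\mathbf{k}}s_{\langle j\rangle-\mathbf{k}}$, while $L_2^{\mathbf{k}}$ has invertible leading coefficient $2c_{\mathbf{k}}$. Inverting the latter as a geometric series in $\epsilon hD$ and multiplying by the series for $L_1^{\mathbf{k}}$ produces the claimed expansion. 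When $\mathbf{k}\neq\pm\langle j\rangle$ the order-$0$ term is $2s_{\langle j\rangle+\mathbf{k}}s_{\langle j\rangle-\mathbf{k}}/c_{\mathbf{k}}$, and the order-$1$ coefficient, which multiplies $\mathrm{i}\epsilon h\dot{\alpha}_j^{\mathbf{k}}$ and first appears as $s_{\mathbf{k}}(2c_{\mathbf{k}}^2-s_{\langle j\rangle+\mathbf{k}}s_{\langle j\rangle-\mathbf{k}})/c_{\mathbf{k}}^2$, collapses to the stated $s_{\mathbf{k}}(1+c_{\langle j\rangle+\mathbf{k}}c_{\langle j\rangle-\mathbf{k}})/c_{\mathbf{k}}^2$ once one verifies the identity $2c_{\mathbf{k}}^2-s_{\langle j\rangle+\mathbf{k}}s_{\langle j\rangle-\mathbf{k}}=1+c_{\langle j\rangle+\mathbf{k}}c_{\langle j\rangle-\mathbf{k}}$ by expressing both sides through $\cos(a\pm b)$ with $a=\tfrac{h}{2}(\omega_j+\mathbf{k}\cdot\boldsymbol{\omega})$, $b=\tfrac{h}{2}(\omega_j-\mathbf{k}\cdot\boldsymbol{\omega})$.

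For the resonant case $\mathbf{k}=\pm\langle j\rangle$ one of the factors $s_{\langle j\rangle\pm\mathbf{k}}$ reduces to $s_{\bfzero}=0$, so the order-$0$ term vanishes and the expansion starts at order $1$ with $\pm2\mathrm{i}\epsilon h s_{\langle j\rangle}\dot{\alpha}_j^{\pm\langle j\rangle}$. I expect the genuine obstacle to lie at order $2$: the coefficient of $\ddot{\alpha}$ receives contributions both from the order-$2$ part of $L_1^{\pm\langle j\rangle}$ (namely $\cos(h\omega_j)$) and from the cross term between the order-$1$ parts of $(L_2^{\pm\langle j\rangle})^{-1}$ and $L_1^{\pm\langle j\rangle}$, and only after inserting $\cos(h\omega_j)=1-2s_{\langle j\rangle}^2$ do these combine into the clean $\tfrac12\epsilon^2h^2\sec(\tfrac12 h\omega_j)$. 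Keeping the bookkeeping of these competing second-order contributions straight, rather than any single identity, is the step most prone to error. \QED
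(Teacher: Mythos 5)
Your proposal is correct: all the expansions and the key trigonometric identities you invoke (in particular $2c_{\mathbf{k}}^2-s_{\langle j\rangle+\mathbf{k}}s_{\langle j\rangle-\mathbf{k}}=1+c_{\langle j\rangle+\mathbf{k}}c_{\langle j\rangle-\mathbf{k}}$, and the cancellation $\cos(h\omega_j)+2s_{\langle j\rangle}^2=1$ at second order in the resonant case) check out, and the paper itself offers no proof, simply declaring these results ``clear'' as direct Taylor expansions of the shift operators in $\epsilon hD$. Your computation is exactly the verification the paper leaves implicit, so the approaches coincide.
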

The following lemma is given in \cite{Cohen08} which will be
 needed in the analysis of this paper.
\begin{lem}\label{lem 08} (See \cite{Cohen08}.)
For $s>1/2$, one has $ \sum\limits_{\norm{\mathbf{k}}\leq K}
\boldsymbol{\omega}^{-2s\abs{\mathbf{k}}}\leq C_{K,s}\leq\infty. $
For $s>1/2$ and $m\geq 2$, it is true that
\begin{equation*}
  \sup_{\norm{\mathbf{k}}\leq K}
\sum\limits_{\mathbf{k}^1+\cdots+\mathbf{k}^m=\mathbf{k}}
\frac{\boldsymbol{\omega}^{-2s(|\mathbf{k}^1|+\cdots+|\mathbf{k}^m|)}}{\boldsymbol{\omega}^{-2s|\mathbf{k}|}}
\leq C_{m,K,s}<\infty,
\end{equation*}
where the sum is taken over $(\mathbf{k}^1,\ldots,\mathbf{k}^m)$
satisfying $\norm{\mathbf{k}^i}\leq K$. For $s\geq1$, it is further
true that $
  \sup_{\norm{\mathbf{k}}\leq K}
\frac{\sum\limits_{l\geq0}\abs{k_l}\omega_l^{2s+1}}{\boldsymbol{\omega}^{2s|\mathbf{k}|}(1+\abs{\mathbf{k}
\cdot\boldsymbol{\omega}})}
  \leq C_{K,s}<\infty.
$
\end{lem}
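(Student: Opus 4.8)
All three estimates are elementary bounds on sums over the frequency lattice, and my plan is to treat them with one common device: since $\omega_l=\sqrt{\rho+l^2}\ge\sqrt{1+\rho}>1$ for every $l\ge1$, geometric sums in each such mode converge, and the only dangerous mode is $l=0$, where $\omega_0=\sqrt{\rho}$ may be $<1$; this single mode is always tamed by the standing bound $\norm{\mathbf{k}}\le K$, which caps $|k_0|\le K$. For the first estimate I would discard the coupling in the constraint: since $\{\norm{\mathbf{k}}\le K\}\subseteq\{|k_0|\le K,\ k_l\in\ZZ\ (l\ge1)\}$ and all terms are nonnegative, the sum is at most $\big(\sum_{|k_0|\le K}\omega_0^{-2s|k_0|}\big)\prod_{l\ge1}\frac{1+\omega_l^{-2s}}{1-\omega_l^{-2s}}$. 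The first factor is a finite sum (a constant in $K,s,\rho$), and the infinite product converges because $\sum_{l\ge1}\omega_l^{-2s}\le\sum_{l\ge1}l^{-2s}<\infty$ precisely when $s>1/2$; the resulting bound is independent of $M$.

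For the second estimate I would first weaken the constraint from $\norm{\mathbf{k}^i}\le K$ to merely $|k_0^i|\le K$ (an upper bound, since the former implies the latter). With only this cap on the zero mode, the summation domain becomes a product over the modes $l$, and since the summand $\prod_l\omega_l^{-2s\sum_i|k_l^i|}$ is also a product over $l$, the whole sum factorises. After dividing by $\boldsymbol{\omega}^{-2s|\mathbf{k}|}$, the $l$-th factor is $S_l(k_l)=\sum_{k_l^1+\cdots+k_l^m=k_l}\omega_l^{-2s(\sum_i|k_l^i|-|k_l|)}$, whose exponent is $\le0$ by the triangle inequality. For $l\ge1$ the factor $S_l$ converges because $\omega_l>1$ controls the (polynomially many) tuples with a prescribed cancellation defect $\sum_i|k_l^i|-|k_l|=2t$; moreover $S_l(0)=1+O(\omega_l^{-4s})$, so the product of these factors over the modes with $k_l=0$ converges for $s>1/2$. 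The at most $\norm{\mathbf{k}}\le K$ modes with $k_l\ne0$ each contribute a factor bounded by a constant depending on $K,m,s$, and the $l=0$ factor is a finite sum. The product of all factors is then bounded uniformly in $M$. (Alternatively this follows from the $m=2$ case by induction on $m$.)

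The third estimate is the delicate one, because $\mathbf{k}\cdot\boldsymbol{\omega}$ can be small through cancellation even when the frequencies present in $\mathbf{k}$ are large, so the numerator cannot simply be absorbed by the weight $\boldsymbol{\omega}^{2s|\mathbf{k}|}$. Let $l_0$ be the index of the largest frequency in the support of $\mathbf{k}$; crudely, $\sum_{l}|k_l|\omega_l^{2s+1}\le K\,\omega_{l_0}^{2s+1}$, while the weight dominates any single factor, $\boldsymbol{\omega}^{2s|\mathbf{k}|}\ge c_1\omega_{l_0}^{2s}$ with $c_1=\min(1,\rho^{sK})$. I would then split into two regimes. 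If $\sum_{l\ne l_0}|k_l|\omega_l\le\tfrac12\omega_{l_0}$, the reverse triangle inequality gives $|\mathbf{k}\cdot\boldsymbol{\omega}|\ge\omega_{l_0}-\sum_{l\ne l_0}|k_l|\omega_l\ge\tfrac12\omega_{l_0}$, hence $\omega_{l_0}\le2(1+|\mathbf{k}\cdot\boldsymbol{\omega}|)$ and the quotient is at most $2K/c_1$. Otherwise a second support index $l_1$ carries mass, and $K\omega_{l_1}\ge\sum_{l\ne l_0}|k_l|\omega_l>\tfrac12\omega_{l_0}$ gives $\omega_{l_0}<2K\omega_{l_1}$; now keeping two factors, $\boldsymbol{\omega}^{2s|\mathbf{k}|}\ge c_1\omega_{l_0}^{2s}\omega_{l_1}^{2s}$, so the quotient is at most $K\omega_{l_0}/(c_1\omega_{l_1}^{2s})\le(2K^2/c_1)\,\omega_{l_1}^{\,1-2s}$, which is bounded since $s\ge1$ forces $1-2s\le-1$. (When $l_1=0$ the factor $\omega_{l_1}^{\,1-2s}$ is a constant in $\rho$.) Taking the larger of the two bounds proves the third estimate.

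The main obstacle is precisely this last dichotomy: the factor $1+|\mathbf{k}\cdot\boldsymbol{\omega}|$ is indispensable and must be used to cover the near-cancellation regime in which $l_0$ is effectively isolated, while in the complementary regime the extra power $\omega_{l_1}^{2s}$ supplied by a second frequency furnishes the needed decay. The only recurring technical care elsewhere is the systematic isolation of the low mode $\omega_0=\sqrt{\rho}$, which the constraint $\norm{\mathbf{k}}\le K$ renders harmless in every estimate.
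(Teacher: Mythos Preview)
The paper does not supply its own proof of this lemma: it is quoted verbatim from Cohen, Hairer, and Lubich (\emph{Arch.\ Ration.\ Mech.\ Anal.}\ \textbf{187}, 2008) and invoked as a black box. There is therefore no paper-side argument to compare against.

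Your argument is correct and self-contained. The first two estimates are handled cleanly by factorising over modes once the $\ell^1$ constraint is relaxed to a cap on the low mode $|k_0|\le K$; the convergence of the resulting geometric sums and infinite products for $s>1/2$ is exactly the right mechanism. The third estimate is where the real content lies, and your dichotomy is sound: either the dominant frequency $\omega_{l_0}$ survives in $|\mathbf{k}\cdot\boldsymbol{\omega}|$ (no cancellation regime), in which case the denominator absorbs the extra power of $\omega_{l_0}$, or a second supported frequency $\omega_{l_1}$ is comparable to $\omega_{l_0}$, in which case the weight $\boldsymbol{\omega}^{2s|\mathbf{k}|}$ contributes at least $\omega_{l_0}^{2s}\omega_{l_1}^{2s}$ and the surplus factor $\omega_{l_1}^{1-2s}$ is bounded precisely because $s\ge1$. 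The edge cases $l_0=0$ or $l_1=0$ are correctly disposed of by the constant $c_1=\min(1,\rho^{sK})$. One cosmetic point: in the first estimate you might state explicitly that the product runs only over $1\le l\le M$ but is dominated by the convergent infinite product, so the bound is uniform in $M$; you say this, but it is worth emphasising since $M$-independence is the whole point.
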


\subsection{The outline of the proof}
The proof relies on a careful research of a modulated Fourier
expansion of the AAVF method \eqref{AAVFmethod}.
 Assume that the conditions of Theorem \ref{main theo} are true. For
the numerical solution $(\mathbf{q}_{n}, \mathbf{p}_{n})$ given by
\eqref{AAVFmethod}, we will construct the following truncated
multi-frequency modulated Fourier expansion (with $N$ from
\eqref{near-resonant R})
\begin{equation}
\begin{aligned} &\tilde{\mathbf{q}}(t)=
\sum\limits_{\norm{\mathbf{k}}\leq 2N}
\mathrm{e}^{\mathrm{i}(\mathbf{k} \cdot\boldsymbol{\omega})
t}\zeta^{\mathbf{k}}(\epsilon t),\ \ \tilde{\mathbf{p}}(t)=
\sum\limits_{\norm{\mathbf{k}}\leq2N}
\mathrm{e}^{\mathrm{i}(\mathbf{k} \cdot\boldsymbol{\omega})
t}\eta^{\mathbf{k}}(\epsilon t),
\end{aligned}
\label{MFE-AAVF}%
\end{equation}
where  $t=nh$ and
$\zeta_{-j}^{-\mathbf{k}}=\overline{\zeta_{j}^{\mathbf{k}}}$,
$\eta_{-j}^{-\mathbf{k}}=\overline{\eta_{j}^{\mathbf{k}}}$. For this
modulated Fourier expansion,   the following key points will be
considered one by one in the rest of this section.
  \begin{itemize}
 \item In Sect. \ref{subsec:mod equ}   formal modulation equations  for the modulation
functions are derived.

\item In Sect. \ref{subsec:rev pic} we consider an iterative construction of the
functions using  reverse Picard iteration.

\item We then  work with a more convenient rescaling and study the estimation of  non-linear terms  in Sect. \ref{subsec:res
est}.

\item  Abstract reformulation of the iteration is presented in Sect. \ref{subsec:ref
rev}.

\item In Sect. \ref{subsec:bou coe}  we control the size of the numerical solution by   studying the bounds of modulation functions.

\item In Sect. \ref{subsec:def}  the bound of the defect is  estimated.

\item We study the
difference of the numerical solution and its modulated Fourier
expansion in Sect.  \ref{subsec:rem}.

\item In Sect. \ref{subsec:alm inv} we show two invariants of the
modulation system  and establish their relationship with the
modified momentum and modified actions.

\item Finally, the previous results that are valid only on a short time
interval  are extended  to a long time interval in Sect.
\ref{subsec:fro to}.
\end{itemize}

 It is noted that the above  procedure is  a standard approach to studying the
long-time behavior of numerical methods of Hamiltonian partial
differential equations by using modulated Fourier expansions (see,
e.g. \cite{Cohen08,Cohen08-1,Gauckler16,Gauckler12,Hairer08}). The
proof  presented here closely follows   these previous publications
but with some modifications adapted to the AAVF method. The main
differences in the analysis   arise due to the implicitness of the
AAVF method and the integral appearing in the method.
\subsection{Modulation equations}\label{subsec:mod equ}
Throughout the proof,   denote by $C$ a generic constant which is
independent of $\epsilon, M, h$ and   $t=nh$.

 In the light of the symmetry of the
AAVF
 method and the following property
$$\displaystyle\int_{0}^{1}f((1-\sigma)\mathbf{q}_{n}+\sigma
\mathbf{q}_{n-1})d\sigma=\displaystyle\int_{0}^{1}f((1-\sigma)\mathbf{q}_{n-1}+\sigma
\mathbf{q}_{n})d\sigma,$$ one obtains
\begin{equation}
\begin{aligned}&\mathbf{q}_{n+1}-2\cos(h\Omega)\mathbf{q}_{n}+\mathbf{q}_{n-1}\\
=&h^2\phi_2(V)\big[\displaystyle\int_{0}^{1}f((1-\sigma)\mathbf{q}_{n}+\sigma
\mathbf{q}_{n+1})d\sigma+
\displaystyle\int_{0}^{1}f((1-\sigma)\mathbf{q}_{n-1}+\sigma
\mathbf{q}_{n})d\sigma\big].
\end{aligned}\label{MFE-2}%
\end{equation}

 We look for a modulated Fourier expansion of the form
\begin{equation*}
\begin{aligned} &\tilde{\mathbf{q}}_{h}(t+\frac{h}{2},\sigma)=\sum\limits_{\norm{\mathbf{k}}\leq 2N}
\mathrm{e}^{\mathrm{i}(\mathbf{k}\cdot\boldsymbol{\omega})
(t+\frac{h}{2})}\xi^{\mathbf{k}}\Big(\epsilon(t+\frac{h}{2}),\sigma\Big)
\end{aligned}
\end{equation*}
for the   term $(1-\sigma)\mathbf{q}_{n}+\sigma \mathbf{q}_{n+1}$.
Then it is obtained that
\begin{equation}\label{xip}
\begin{aligned} \xi^{\mathbf{k}}\Big(\epsilon(t+\frac{h}{2}),\sigma\Big)=&\Big((1-\sigma)\mathrm{e}^{-\frac{1}{2}\mathrm{i}(\mathbf{k} \cdot
\boldsymbol{\omega}) h}\mathrm{e}^{-
 \frac{h}{2}\epsilon D} +\sigma\mathrm{e}^{\frac{1}{2}\mathrm{i}(\mathbf{k} \cdot\boldsymbol{\omega})
h}\mathrm{e}^{
 \frac{h}{2}\epsilon D}\Big)\zeta^{\mathbf{k}}\Big(\epsilon(t+\frac{h}{2})\Big)\\
 =&L^{\mathbf{k}}_4(\sigma)\zeta^{\mathbf{k}}\Big(\epsilon(t+\frac{h}{2})\Big).
\end{aligned}
\end{equation}
In the same way, for $(1-\sigma)\mathbf{q}_{n-1}+\sigma
\mathbf{q}_{n}$, we have the following modulated Fourier expansion
\begin{equation*}
\begin{aligned} &\tilde{\mathbf{q}}_{h}(t+\frac{h}{2},\sigma)=\sum\limits_{\norm{\mathbf{k}}\leq 2N}
\mathrm{e}^{\mathrm{i}(\mathbf{k}\cdot\boldsymbol{\omega})
(t+\frac{h}{2})}\xi^{\mathbf{k}}\Big(\epsilon(t-\frac{h}{2}),\sigma\Big)
\end{aligned}
\end{equation*}
with
\begin{equation}\label{xim}
\begin{aligned} \xi^{\mathbf{k}}\Big(\epsilon(t-\frac{h}{2}),\sigma\Big)=L^{\mathbf{k}}_4(\sigma)\zeta^{\mathbf{k}}\big(\epsilon(t-\frac{h}{2})\big).
\end{aligned}
\end{equation}
Inserting the  modulated Fourier expansions \eqref{MFE-AAVF},
\eqref{xip}, and \eqref{xim} into \eqref{MFE-2} yields
\begin{equation*}
\begin{aligned}&\tilde{\mathbf{q}}(t+h)-2\cos(h\Omega)\tilde{\mathbf{q}}(t)+\tilde{\mathbf{q}}(t-h)\\
=&h^2\phi_2(V)\Big[\displaystyle\int_{0}^{1}f\big(\tilde{\mathbf{q}}_{h}(t+\frac{h}{2},\sigma)\big)d\sigma+
\displaystyle\int_{0}^{1}f\big(\tilde{\mathbf{q}}_{h}(t-\frac{h}{2},\sigma)\big)d\sigma\Big],
\end{aligned}
\end{equation*}
which can be rewritten  as \begin{equation}\label{new
q}(\mathrm{e}^{\frac{1}{2} hD}+\mathrm{e}^{-\frac{1}{2} hD})^{-1}(
\mathrm{e}^{ hD}-2\cos(h\Omega)+ \mathrm{e}^{-
hD})\tilde{\mathbf{q}}(t)=h^2\phi_2(V)\displaystyle\int_{0}^{1}f(\tilde{\mathbf{q}}_{h}(t,\sigma))d\sigma.\end{equation}

In what follows, we rewrite this equation by using the same way
introduced in  \cite{Hairer08}. We start with making the following
notation.  For a $2\pi$-periodic function $w(x)$, denote by
$(\mathcal{Q}w)(x)$ the trigonometric interpolation polynomial to
$w(x)$ in the points $x_k$. If $w(x)$ is of the form
$w(x)=\sum\limits_{j=-\infty}^{\infty} w_j e^{\mathrm{i}jx},$ then
one has that $(\mathcal{Q}w)(x)= \sum\limits_{\abs{j} \leq
M}''(\sum\limits_{l=-\infty}^{\infty}w_{j+2Ml}) e^{\mathrm{i}jx}$ by
considering $x_k=\frac{k \pi}{M}.$ For a $2M$-periodic coefficient
sequence $\mathbf{q} = (q_j)$,  $(\mathcal{P}\mathbf{q})(x)$ is
referred to the trigonometric polynomial with coefficients $q_j$,
i.e., $(\mathcal{P}\mathbf{q})(x) = \sum\limits_{\abs{j} \leq M}'
q_j e^{\mathrm{i}jx}.$ By using these new denotations,  \eqref{new
q} becomes
\begin{equation}\label{new qq}
(\mathrm{e}^{\frac{1}{2} hD}+\mathrm{e}^{-\frac{1}{2} hD})^{-1}(
\mathrm{e}^{ hD}-2\cos(h\Omega)+ \mathrm{e}^{-
hD})\mathcal{P}\tilde{\mathbf{q}}(t)=h^2\phi_2(V)\displaystyle\int_{0}^{1}\mathcal{Q}g(\mathcal{P}\tilde{\mathbf{q}}_{h}(t,\sigma))d\sigma.\end{equation}
Taylor expansion of the non-linearity $\mathcal{Q}g$ at $0$ is given
by
\footnote{It is noted that $g(0) = g'(0) = 0$ is used here.}
\begin{equation*}
\begin{aligned}&\mathcal{Q}g(\mathcal{P}\tilde{\mathbf{q}}_{h}(t,\sigma))=\sum\limits_{m\geq
2}\frac{g^{(m)}(0)}{m!}\mathcal{Q}(\mathcal{P}\tilde{\mathbf{q}}_{h}(t,\sigma))^m\\
=&\sum\limits_{m\geq 2}\frac{g^{(m)}(0)}{m!}
\Big(\sum\limits_{\abs{j_1} \leq M}''
\sum\limits_{l=-\infty}^{\infty}
\sum\limits_{\norm{\mathbf{k}^1}\leq 2N}'
\mathrm{e}^{\mathrm{i}(\mathbf{k}^1\cdot\boldsymbol{\omega})
t}\xi^{\mathbf{k}^1}_{j_1+2Ml}(\tau,\sigma)
e^{\mathrm{i}j_1x}\Big)\\
&\cdots \Big(\sum\limits_{\abs{j_m} \leq
M}''\sum\limits_{l=-\infty}^{\infty}
\sum\limits_{\norm{\mathbf{k}^m}\leq 2N}'
\mathrm{e}^{\mathrm{i}(\mathbf{k}^m\cdot\boldsymbol{\omega})
t}\xi^{\mathbf{k}^m}_{j_m+2Ml}(\tau,\sigma) e^{\mathrm{i}j_mx}\Big)\\
=&\sum\limits_{m\geq 2}\frac{g^{(m)}(0)}{m!} \sum\limits_{\abs{j}
\leq M}''\sum\limits_{j_1+\cdots+j_m\equiv j\ \textmd{mod}\ 2M}'
\sum\limits_{\norm{\mathbf{k}^1}\leq
2N,\ldots,\norm{\mathbf{k}^m}\leq 2N}
(\xi^{\mathbf{k}^1}_{j_1}\cdots \xi^{\mathbf{k}^m}_{j_m})(\tau,\sigma)\\
&
\mathrm{e}^{\mathrm{i}((\mathbf{k}^1+\cdots+\mathbf{k}^m)\cdot\boldsymbol{\omega})
t} e^{\mathrm{i}jx},
\end{aligned}
\end{equation*}
where $\tau=h \epsilon$ and the prime on the sum indicates that a
factor $1/2$ is included in the appearance of
$\xi^{\mathbf{k}^i}_{j_i}$ with $j_i = \pm M$.
  Inserting this into \eqref{new qq},
considering the $j$th Fourier coefficient and comparing the
coefficients of
$\mathrm{e}^{\mathrm{i}(\mathbf{k}\cdot\boldsymbol{\omega}) t}$, we
obtain
\begin{equation}\label{ljkqp}
\begin{aligned}L^{\mathbf{k}} \zeta_j^{\mathbf{k}}=&-h^2\phi_2(h^2\omega^2_j)\sum\limits_{m\geq 2}\frac{g^{(m)}(0)}{m!}
\sum\limits_{\mathbf{k}^1+\cdots+\mathbf{k}^m=\mathbf{k}}\sum\limits_{j_1+\cdots+j_m\equiv
j\
\textmd{mod}\ 2M}'\\
& \int_{0}^{1}\Big[\big(\xi_{j_1}^{\mathbf{k}^1}\cdot\ldots\cdot
\xi_{j_m}^{\mathbf{k}^m}\big)(t\epsilon,\sigma)\Big]d\sigma.
\end{aligned}
\end{equation}
 It is noted that the integral
appearing here can be calculated exactly.

According to
 the Taylor expansion \eqref{L expansion} of $L^{\mathbf{k}}$, the
dominating term   is $\pm2\mathrm{i}\sin(\frac{1}{2}h\omega_j)
h\epsilon \dot{\zeta}_j^{\pm \langle j\rangle}$  for  $\mathbf{k} =
\pm \langle j\rangle$. If $\mathbf{k}\neq \pm \langle j\rangle$,
then the dominating term is $2 \frac{s_{\langle
j\rangle+\mathbf{k}}s_{\langle
j\rangle-\mathbf{k}}}{c_{\mathbf{k}}}$  by considering the condition
\eqref{inequa}. If
 \eqref{inequa} is not true, the  condition
\eqref{non-resonance cond} ensures that the defect in simply setting
$ \zeta_j^{\mathbf{k}}\equiv 0$ is of size
$\mathcal{O}(\epsilon^{N+1})$ in an appropriate Sobolev-type norm.
The above analysis and \eqref{ljkqp} determine the formal modulation
equations of  modulated functions $\zeta^{\mathbf{k}}$.

For the  modulation equations of $\eta^{\mathbf{k}}$,    it follows
from \eqref{AAVFmethod} that  \begin{equation}\label{qp connec}
\mathbf{q}_{n+1}-\mathbf{q}_{n} =\Omega^{-1}\tan(\frac{1}{2}h
\Omega) ( \mathbf{p}_{n+1}+ \mathbf{p}_{n} ).\end{equation} By the
definition of $L_3$, this relation can be expressed as $$
L_3^{\mathbf{k}}\zeta^{\mathbf{k}}=\Omega^{-1}\tan(\frac{1}{2}h
\Omega) \eta^{\mathbf{k}}.$$ In terms of the Taylor series of
$L^{\mathbf{k}}_3$, the relationship between $\eta^{\mathbf{k}}$ and
$\zeta^{\mathbf{k}}$ can be established:
\begin{equation}\label{eta modula sys}
\begin{aligned} & \eta_j^{\pm\langle j\rangle}=\pm \mathrm{i}\omega_j \zeta_j^{\pm\langle
j\rangle}+\mathcal{O}(h \epsilon),\ \   \eta_j^{\mathbf{k}}=
\frac{\tan(\frac{1}{2}h(\mathbf{k}\cdot
\boldsymbol{\omega}))}{\tan(\frac{1}{2}h\omega_j)}\mathrm{i}\omega_j\zeta_j^{\mathbf{k}}+\mathcal{O}(h
\epsilon)
\end{aligned}
\end{equation}
for $\mathbf{k}\neq \pm\langle j\rangle,$ which gives the modulation
equations of $\eta^{\mathbf{k}}$.

On the other hand,   it needs to derive the initial values
 for  $\dot{\zeta}_j^{\pm \langle j\rangle}$ appearing in \eqref{ljkqp}. By considering $\tilde{\mathbf{q}} (0) =
\mathbf{q}(0)$, one has
\begin{equation}\label{initial pl}
\begin{aligned}&\zeta_j^{\langle
j\rangle}(0)+\zeta_j^{-\langle
j\rangle}(0)=q_j(0)-\sum\limits_{\mathbf{k}\neq \pm  \langle
j\rangle} \zeta_j^{\mathbf{k}}(0).
\end{aligned}
\end{equation}
Moreover, it follows from  $\tilde{\mathbf{p}} (0) = \mathbf{p}(0)$
that $\eta_j^{\langle j\rangle}(0)+\eta_j^{-\langle
j\rangle}(0)=p_j(0)-\sum\limits_{\mathbf{k}\neq \pm  \langle
j\rangle} \eta_j^{\mathbf{k}}(0),$  which yields
\begin{equation}\label{initial mi}
\begin{aligned}&\mathrm{i} \omega_j(\zeta_j^{\langle
j\rangle}(0)-\zeta_j^{-\langle
j\rangle}(0))=p_j(0)-\sum\limits_{\mathbf{k}\neq \pm  \langle
j\rangle}
\eta_j^{\mathbf{k}}(0)\\
&=p_j(0)-\sum\limits_{\mathbf{k}\neq \pm  \langle
j\rangle}\frac{\tan(\frac{1}{2}h(\mathbf{k}\cdot\boldsymbol{\omega}))}{\tan(\frac{1}{2}h
\omega_j)}\mathrm{i}\omega_j
\zeta_j^{\mathbf{k}}(0)+\mathcal{O}(h\epsilon).
\end{aligned}
\end{equation}
The formulae \eqref{initial pl} and \eqref{initial mi} determine the
initial values for $\zeta_j^{\pm \langle j\rangle}$.

\subsection{Reverse Picard iteration}\label{subsec:rev pic}
  Following
\cite{Cohen08-1,Hairer08}, the reverse Picard iteration of the
functions $\zeta^{\mathbf{k}}$  is considered here such that after
$4N$ iteration steps, the defects in \eqref{ljkqp}, \eqref{initial
pl} and \eqref{initial mi} are of magnitude
$\mathcal{O}(\epsilon^{N+1})$ in the $H^s$ norm.

Denote  by $[\cdot]^{(n)}$ the $n$th iterate.    For $\mathbf{k} =
\pm \langle j\rangle$, we consider the iteration procedure as
follows:
\begin{equation}\label{Pic ite j}
\begin{aligned}&\pm2\mathrm{i}s_{\langle
j\rangle}h \epsilon \big[\dot{\zeta}_j^{\pm  \langle
j\rangle}\big]^{(n+1)}=\Big[-h^2\phi_2(h^2\omega^2_j)\sum\limits_{m\geq
2}\frac{g^{(m)}(0)}{m!}
\sum\limits_{\mathbf{k}^1+\cdots+\mathbf{k}^m=\mathbf{k}}\sum\limits_{j_1+\cdots+j_m\equiv
j\
\textmd{mod}\ 2M}'\\
&  \int_{0}^{1}\Big[\big(\xi_{j_1}^{\mathbf{k}^1}\cdot\ldots\cdot
\xi_{j_m}^{\mathbf{k}^m}\big)(t\epsilon,\sigma)\Big]d\sigma-\big(\frac{1}{2}
\epsilon^2 h^2\sec(\frac{1}{2}h\omega_j) \ddot{\zeta}_j^{\pm \langle
j\rangle}+\cdots\big)\Big]^{(n)}.
\end{aligned}
\end{equation}  For $\mathbf{k} \neq\pm  \langle j\rangle$
 and
$j$ satisfying the non-resonant \eqref{inequa}, the  iteration
procedure is of the form
\begin{equation}\label{Pic ite notj}
\begin{aligned}&2 \frac{s_{\langle j\rangle+\mathbf{k}}s_{\langle
j\rangle-\mathbf{k}}}{c_{\mathbf{k}}}
\big[\zeta_j^{\mathbf{k}}\big]^{(n+1)}=\Big[-h^2\phi_2(h^2\omega^2_j)\sum\limits_{m\geq
2}\frac{g^{(m)}(0)}{m!}
\sum\limits_{\mathbf{k}^1+\cdots+\mathbf{k}^m=\mathbf{k}}\sum\limits_{j_1+\cdots+j_m\equiv
j\
\textmd{mod}\ 2M}'\\
&  \int_{0}^{1}\Big[\big(\xi_{j_1}^{\mathbf{k}^1}\cdot\ldots\cdot
\xi_{j_m}^{\mathbf{k}^m}\big)(t\epsilon,\sigma)\Big]d\sigma-\big(\textmd{i}
\epsilon h
 \frac{s_{\mathbf{k}}(1+c_{\langle j\rangle+\mathbf{k}}c_{\langle
j\rangle-\mathbf{k}})}{c_{\mathbf{k}}^2}
\dot{\zeta}_j^{\mathbf{k}}+\cdots\big)\Big]^{(n)},
\end{aligned}
\end{equation}
where we let $\zeta_j^{\mathbf{k}}=0$ for  $\mathbf{k}\neq
\pm\langle j\rangle$ in the near-resonant set
$\mathcal{R}_{\epsilon,h}$.  For the initial values \eqref{initial
pl} and \eqref{initial mi}, the iteration procedure  reads
\begin{equation}\label{pica initial pl}
\begin{aligned}&\big[\zeta_j^{\langle
j\rangle}(0)+\zeta_j^{-\langle
j\rangle}(0)\big]^{(n+1)}=\big[q_j(0)-\sum\limits_{\mathbf{k}\neq
\pm \langle j\rangle} \zeta_j^{\mathbf{k}}(0)\big]^{(n)},\\
&\mathrm{i} \omega_j\big[\zeta_j^{\langle
j\rangle}(0)-\zeta_j^{-\langle
j\rangle}(0)\big]^{(n+1)}=\big[p_j(0)-\sum\limits_{\mathbf{k}\neq
\pm \langle j\rangle}\frac{\tan(\frac{1}{2}h(\mathbf{k}\cdot
\boldsymbol{\omega}))}{\tan(\frac{1}{2}h
\omega_j)}\mathrm{i}\omega_j
\zeta_j^{\mathbf{k}}(0)+\mathcal{O}(h\epsilon)\big]^{(n)}.
\end{aligned}
\end{equation}

In these iterations it is    assumed that $\norm{\mathbf{k}}\leq K:=
2N$ and $\norm{\mathbf{k}^i}\leq K$ for $i = 1,\ldots ,m$. There is
an initial value problem of first-order ODEs for $\zeta_j^{\pm
\langle j\rangle}$ (for $|j|\leq M$) and algebraic equations for
$\zeta_j^{\mathbf{k}}$   with $\mathbf{k}\neq\pm \langle j\rangle$
at each iteration step. The starting iterates ($n = 0$) are chosen
as $\zeta_j^{\mathbf{k}}(\tau)=0$   for $\mathbf{k}\neq\pm \langle
j\rangle$, and $\zeta_j^{\pm\langle
j\rangle}(\tau)=\zeta_j^{\pm\langle j\rangle}(0)$, where
$\zeta_j^{\pm\langle j\rangle}(0)$   are
determined by \eqref{pica initial pl}. 

\subsection{Rescaling and estimation of the nonlinear terms}\label{subsec:res est}
Similarly to  Sect. 3.5 of \cite{Cohen08} and Sect. 6.3 of
\cite{Cohen08-1}, in what follows, we consider  a more convenient
rescaling
\begin{equation*}
\begin{aligned}
c\zeta_{j}^{\mathbf{k}}=\frac{\boldsymbol{\omega}^{|\mathbf{k}|}}{\epsilon^{[[\mathbf{k}]]}}\zeta_j^{\mathbf{k}},\
\ c\zeta^{\mathbf{k}}=\big(c\zeta_{j}^{\mathbf{k}}\big)_{|j|\leq
M}=\frac{\boldsymbol{\omega}^{|\mathbf{k}|}}{\epsilon^{[[\mathbf{k}]]}}\zeta^{\mathbf{k}}
\end{aligned}
\end{equation*}
 in the space $\mathrm{H}^s = (H^s)^{\mathcal{K}} = \{c\zeta =
(c\zeta^{\mathbf{k}})_{\mathbf{k}\in \mathcal{K}}:
c\zeta^{\mathbf{k}}\in H^s \}.$ The norm of this space is defined as
$|||c\zeta|||_s^2=\sum\limits_{\mathbf{k}\in \mathcal{K}}
\norm{c\zeta^{\mathbf{k}}}_s^2$, where  the set $\mathcal{K}$ is
defined by $\mathcal{K}=\{\mathbf{k}=(k_l)_{l=0}^M\ \textmd{with
integers} \ k_l:\ \norm{\mathbf{k}}\leq K\}$ with $K = 2N.$
Likewise, we use the notation $c\xi^{\mathbf{k}}\in \mathrm{H}^s$
with the same meaning.

In order to express the non-linearity of \eqref{ljkqp} in these
rescaled variables, define the nonlinear function
$\textbf{f}=(f_j^{\mathbf{k}})$ by
\begin{equation*}
\begin{aligned}f_j^{\mathbf{k}}\big(c\xi(\tau)\big) =&\frac{\boldsymbol{\omega}^{|\mathbf{k}|}}{\epsilon^{[[\mathbf{k}]]}}\sum\limits_{m=2}^N\frac{g^{(m)}(0)}{m!}
\sum\limits_{\mathbf{k}^1+\cdots+\mathbf{k}^m=\mathbf{k}}\frac{\epsilon^{[[\mathbf{k}^1]]+\cdots+[[\mathbf{k}^m]]}}{\boldsymbol{\omega}^{|\mathbf{k}^1|+\cdots+|\mathbf{k}^m|}} \\
&\sum\limits_{j_1+\cdots+j_m\equiv j\ \textmd{mod}\ 2M}'
\int_{0}^{1}\big(c\xi_{j_1}^{\mathbf{k}^1}\cdot\ldots\cdot
c\xi_{j_m}^{\mathbf{k}^m}\big)(\tau,\sigma)d\sigma.
\end{aligned}
\end{equation*}
Regarding  this function, we have the following bounds.
\begin{prop}\label{pro: f}
It is true that
\begin{equation}\label{bounds f12}
\begin{aligned}
\sum\limits_{\mathbf{k}\in \mathcal{K}}
\norm{f^{\mathbf{k}}(c\xi)}_s^2  \leq C \epsilon P(||| c\tilde{\xi}
|||_s^2),\ \
 \sum\limits_{|j|\leq M}
\norm{f^{\pm\langle j\rangle }(c\xi)}_s^2\leq C\epsilon^3
P_1(|||c\tilde{\xi}|||_s^2),
\end{aligned}
\end{equation}
where  $c\tilde{\xi}(\tau):=\sup_{0\leq
\sigma\leq1}\{c\xi(\tau,\sigma)\}$ and $P$ and $P_1$ are polynomials
with coefficients bounded independently of $\epsilon, h,$ and $M$.
\end{prop}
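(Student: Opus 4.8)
The plan is to follow the standard modulated-Fourier-expansion estimates of Cohen, Hairer and Lubich, reducing everything to the combinatorics of the index $[[\mathbf{k}]]$ together with the algebra property of $H^s$ and the summation bounds of Lemma \ref{lem 08}. Since the outer sum over $m$ is finite ($2\leq m\leq N$) with coefficients $g^{(m)}(0)/m!$ bounded independently of $\epsilon,h,M$, it suffices to estimate, for each fixed $m$, the $m$-th term of $f_j^{\mathbf{k}}$; the two polynomials $P$ and $P_1$ in \eqref{bounds f12} then arise by collecting these contributions, the $m$-th one being a monomial of degree $m$ in $|||c\tilde{\xi}|||_s^2$ because $f^{\mathbf{k}}$ is $m$-linear in $c\xi$.

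First I would isolate the power of $\epsilon$. The net exponent carried by the $m$-th term is $\sum_{i=1}^m[[\mathbf{k}^i]]-[[\mathbf{k}]]$ with $\mathbf{k}^1+\cdots+\mathbf{k}^m=\mathbf{k}$. Using that $\norm{\mathbf{k}}=\norm{\sum_i\mathbf{k}^i}$ and $\sum_i\norm{\mathbf{k}^i}$ have the same parity (in each coordinate the difference $\sum_i|k^i_l|-|\sum_i k^i_l|$ is even) together with the convention $[[\mathbf{0}]]=3/2$, a short case distinction on how many $\mathbf{k}^i$ vanish yields $\sum_i[[\mathbf{k}^i]]-[[\mathbf{k}]]\geq\tfrac12$ in general, and the sharper bound $\sum_i[[\mathbf{k}^i]]-[[\mathbf{k}]]\geq\tfrac32$ when $\norm{\mathbf{k}}=1$, i.e. for $\mathbf{k}=\pm\langle j\rangle$. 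After squaring the $H^s$ norm these produce exactly the prefactors $\epsilon$ and $\epsilon^3$ claimed in the two inequalities of \eqref{bounds f12}.

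Next I would estimate the $H^s$ norm of the convolution product. For $s\geq 1>1/2$ the space $H^s$ is an algebra, so the discrete convolution $\sum_{j_1+\cdots+j_m\equiv j\,(2M)}'\prod_i c\xi_{j_i}^{\mathbf{k}^i}$ is bounded in $\norm{\cdot}_s$ by $C^{m-1}\prod_i\norm{c\xi^{\mathbf{k}^i}}_s$, the mod-$2M$ aliasing being controlled by the boundedness of the underlying trigonometric interpolation on $H^s$; replacing $\int_0^1\cdots\,d\sigma$ by the supremum over $\sigma$ then lets me pass to $c\tilde{\xi}$. The remaining weight $\boldsymbol{\omega}^{|\mathbf{k}|}/\boldsymbol{\omega}^{|\mathbf{k}^1|+\cdots+|\mathbf{k}^m|}\leq 1$ (since $\sum_i|\mathbf{k}^i|\geq|\mathbf{k}|$ componentwise) is redistributed so that, after a Cauchy--Schwarz inequality in the sum over decompositions $\mathbf{k}^1+\cdots+\mathbf{k}^m=\mathbf{k}$ and finally in the sum over $\mathbf{k}\in\mathcal{K}$, the combinatorial weight is controlled by the summation bounds of Lemma \ref{lem 08}. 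This converts $\sum_{\mathbf{k}\in\mathcal{K}}\norm{f^{\mathbf{k}}}_s^2$ into a constant times the appropriate power of $|||c\tilde{\xi}|||_s^2$.

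The main obstacle is the combined bookkeeping in this last step: one must track simultaneously the $\epsilon$-powers coming from $[[\cdot]]$, the $\boldsymbol{\omega}$-weights, and the algebra constants while summing over both the frequency convolution $j_1+\cdots+j_m\equiv j\,(2M)$ and the index convolution $\mathbf{k}^1+\cdots+\mathbf{k}^m=\mathbf{k}$, in such a way that Lemma \ref{lem 08} applies cleanly. In particular, securing the improved factor $\epsilon^3$ for $\mathbf{k}=\pm\langle j\rangle$ is the delicate point: it hinges on the sharp parity count above together with the nonstandard value $[[\mathbf{0}]]=3/2$, which is precisely what forces an extra power of $\epsilon$ whenever a zero index occurs in the decomposition.
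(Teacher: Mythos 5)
Your proposal is correct and follows essentially the same route as the paper's proof: Cauchy--Schwarz over the decompositions $\mathbf{k}^1+\cdots+\mathbf{k}^m=\mathbf{k}$ with the weight sums controlled by Lemma~\ref{lem 08} (with exponent $1$), the algebra property of the weighted norm for the mod-$2M$ convolution with the $\sigma$-integral bounded by the supremum $c\tilde{\xi}$, and the counting bound $[[\mathbf{k}^1]]+\cdots+[[\mathbf{k}^m]]\geq \frac{1}{2}(m-1)+[[\mathbf{k}]]$, sharpened to $[[\mathbf{k}^1]]+\cdots+[[\mathbf{k}^m]]\geq 5/2$ when $\mathbf{k}=\pm\langle j\rangle$, which after squaring gives exactly the factors $\epsilon$ and $\epsilon^3$. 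Your parity-plus-case-distinction justification of the sharpened bound (including the role of $[[\mathbf{0}]]=3/2$) is in fact more explicit than the paper's ``it can be checked'' at that point.
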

\begin{proof}
In the light of the inequality $(\sum\limits_{l=1}^N a_m)^2\leq
N\sum\limits_{l=1}^N a_m^2,$ and  the Cauchy-Schwarz inequality, one
has
\begin{equation*}
\begin{aligned}&\sum\limits_{\mathbf{k}\in \mathcal{K}}
\norm{f^{\mathbf{k}}(c\xi)}_s^2=\sum\limits_{\norm{\mathbf{k}}\leq
K}
\sum\limits_{|j|\leq M}''\omega_j^{2s} |f^{\mathbf{k}}_j|^2\\
=&\sum\limits_{\norm{\mathbf{k}}\leq K} \sum\limits_{|j|\leq
M}''\omega_j^{2s}\frac{\boldsymbol{\omega}^{2|\mathbf{k}|}}{\epsilon^{2[[\mathbf{k}]]}}\Big(
\sum\limits_{m=2}^N\frac{g^{(m)}(0)}{m!}
\sum\limits_{\mathbf{k}^1+\cdots+\mathbf{k}^m=\mathbf{k}}\frac{\epsilon^{[[\mathbf{k}^1]]+\cdots+[[\mathbf{k}^m]]}}{\boldsymbol{\omega}^{|\mathbf{k}^1|+\cdots+|\mathbf{k}^m|}} \\
&\sum\limits_{j_1+\cdots+j_m\equiv j\ \textmd{mod}\ 2M}'
\int_{0}^{1}\big(c\xi_{j_1}^{\mathbf{k}^1}\cdots
c\xi_{j_m}^{\mathbf{k}^m}\big)(\tau,\sigma)d\sigma\Big)^2\\
\leq&\sum\limits_{\norm{\mathbf{k}}\leq K} N\sum\limits_{|j|\leq
M}''\omega_j^{2s}\frac{\boldsymbol{\omega}^{2|\mathbf{k}|}}{\epsilon^{2[[\mathbf{k}]]}}
\sum\limits_{m=2}^N\Big(\frac{g^{(m)}(0)}{m!}\Big)^2
\Big(\sum\limits_{\mathbf{k}^1+\cdots+\mathbf{k}^m=\mathbf{k}}\frac{\epsilon^{[[\mathbf{k}^1]]+\cdots+[[\mathbf{k}^m]]}}{\boldsymbol{\omega}^{|\mathbf{k}^1|+\cdots+|\mathbf{k}^m|}} \\
&\sum\limits_{j_1+\cdots+j_m\equiv j\ \textmd{mod}\ 2M}'
\int_{0}^{1}\big(c\xi_{j_1}^{\mathbf{k}^1}\cdots
c\xi_{j_m}^{\mathbf{k}^m}\big)(\tau,\sigma)d\sigma\Big)^2\\
\leq&\sum\limits_{\norm{\mathbf{k}}\leq K} N\sum\limits_{|j|\leq
M}''\omega_j^{2s}\frac{\boldsymbol{\omega}^{2|\mathbf{k}|}}{\epsilon^{2[[\mathbf{k}]]}}
\sum\limits_{m=2}^N\Big(\frac{g^{(m)}(0)}{m!}\Big)^2
\sum\limits_{\mathbf{k}^1+\cdots+\mathbf{k}^m=\mathbf{k}}\Big(\frac{\epsilon^{[[\mathbf{k}^1]]+\cdots+[[\mathbf{k}^m]]}}{\boldsymbol{\omega}^{|\mathbf{k}^1|+\cdots+|\mathbf{k}^m|}} \Big)^2\\
&\sum\limits_{\mathbf{k}^1+\cdots+\mathbf{k}^m=\mathbf{k}}\Big(\sum\limits_{j_1+\cdots+j_m\equiv
j\ \textmd{mod}\ 2M}'
\int_{0}^{1}\big(c\xi_{j_1}^{\mathbf{k}^1}\cdots
c\xi_{j_m}^{\mathbf{k}^m}\big)(\tau,\sigma)d\sigma\Big)^2\\
\\
\leq&\sum\limits_{\norm{\mathbf{k}}\leq K} N\sum\limits_{|j|\leq
M}''\omega_j^{2s}\frac{\boldsymbol{\omega}^{2|\mathbf{k}|}}{\epsilon^{2[[\mathbf{k}]]}}
\sum\limits_{m=2}^N\Big(\frac{g^{(m)}(0)}{m!}\Big)^2
\sum\limits_{\mathbf{k}^1+\cdots+\mathbf{k}^m=\mathbf{k}}\Big(\frac{\epsilon^{[[\mathbf{k}^1]]+\cdots+[[\mathbf{k}^m]]}}{\boldsymbol{\omega}^{|\mathbf{k}^1|+\cdots+|\mathbf{k}^m|}} \Big)^2\\
&\sum\limits_{\mathbf{k}^1+\cdots+\mathbf{k}^m=\mathbf{k}}\Big(\sum\limits_{j_1+\cdots+j_m\equiv
j\ \textmd{mod}\ 2M}' \big(c\tilde{\xi}_{j_1}^{\mathbf{k}^1}\cdots
c\tilde{\xi}_{j_m}^{\mathbf{k}^m}\big)(\tau)\Big)^2.
\end{aligned}
\end{equation*}
According to the definition of $[[\cdot]]$, we have
\begin{equation*}
\begin{aligned}&[[\mathbf{k}^1]]+\cdots+[[\mathbf{k}^m]]=\frac{1}{2}(m+\norm{\mathbf{k}^1}+\cdots+\norm{\mathbf{k}^m})\geq\frac{1}{2}(m+\norm{\mathbf{k}})=
\frac{1}{2}(m-1)+[[\mathbf{k}]],\end{aligned}
\end{equation*}
which yields that
$\frac{\epsilon^{[[\mathbf{k}^1]]+\cdots+[[\mathbf{k}^m]]}}{\epsilon^{[[\mathbf{k}]]}}\leq
\epsilon^{\frac{1}{2}(m-1)}.$ Therefore, from Lemma \ref{lem 08}, it
follows that \begin{equation*}
\begin{aligned}\sum\limits_{\mathbf{k}\in \mathcal{K}}
\norm{f^{\mathbf{k}}(c\xi)}_s^2 &\leq N \epsilon\sum\limits_{m=
2}^N\Big(\frac{g^{(m)}(0)}{m!}\Big)^2 \epsilon^{(m-2)}C_{m,K,1}
\Big(\sum\limits_{\norm{\mathbf{k}}\leq
K}\norm{c\tilde{\xi}^{\mathbf{k}}}_s^2\Big)^m\\
&\leq C \epsilon P(||| c\tilde{\xi}|||_s^2),
\end{aligned}
\end{equation*}
where $P(x)=\sum\limits_{m=2}^N\Big(\frac{g^{(m)}(0)}{m!}\Big)^2
\epsilon^{(m-2)}C_{m,K,1}x^m.$

For the special case that $\mathbf{k} = \pm \langle j\rangle$, it
can be checked that
$[[\mathbf{k}^1]]+\cdots+[[\mathbf{k}^m]]\geq5/2$ for $m\geq 2$ by
considering $\mathbf{k}^1+\cdots+\mathbf{k}^m=\pm \langle j\rangle$.
Thus the bound restriction to this special case can be improved to a
factor $\epsilon^3$ instead of $\epsilon$ and then we have the
second result of \eqref{bounds f12}. \hfill\end{proof}

In a similar way, we consider    different rescaling
\begin{equation}\label{diff resca}
\begin{aligned}
\hat{c}\zeta_{j}^{\mathbf{k}}=\frac{\boldsymbol{\omega}^{s|\mathbf{k}|}}{\epsilon^{[[\mathbf{k}]]}}\zeta_j^{\mathbf{k}},\
\
\hat{c}\zeta^{\mathbf{k}}=\big(\hat{c}\zeta_{j}^{\mathbf{k}}\big)_{|j|\leq
M}=\frac{\boldsymbol{\omega}^{s|\mathbf{k}|}}{\epsilon^{[[\mathbf{k}]]}}\zeta^{\mathbf{k}}
\end{aligned}
\end{equation}
 in   $\mathrm{H}^1 = (H^1)^{\mathcal{K}}$ with norm
$|||\hat{c}\zeta|||_1^2=\sum\limits_{\norm{\mathbf{k}}\leq K}
\norm{\hat{c}\zeta^{\mathbf{k}}}_1^2$, where
$\hat{f}_j^{\mathbf{k}}$ is defined as $f_j^{\mathbf{k}}$ but with
$\boldsymbol{\omega}^{|\mathbf{k}|}$ replaced by
$\boldsymbol{\omega}^{s|\mathbf{k}|}$. We use similar notations
$\hat{c}\xi^{\mathbf{k}}\in \mathrm{H}^1$ and also get similar
bounds
\begin{equation*}
\begin{aligned} &\sum\limits_{\mathbf{k}\in \mathcal{K}}
\norm{\hat{f}^{\mathbf{k}}(\hat{c}\xi)}_1^2\leq C\epsilon
\hat{P}(||| \hat{c}\tilde{\xi}|||_1^2),\ \ \sum\limits_{|j|\leq M}
\norm{\hat{f}^{\pm\langle j\rangle }(\hat{c}\xi)}_1^2\leq
C\epsilon^3 \hat{P}_1(|||\hat{c}\tilde{\xi}|||_1^2)
\end{aligned}
\end{equation*}
with other functions $\hat{P}$ and $\hat{P}_1$.

\subsection{Reformulation of the reverse Picard iteration}\label{subsec:ref rev}
 In this subsection, we split  $c\zeta$ into two parts in the light of the two cases: $\mathbf{k}=\pm \langle j\rangle$
 and $\mathbf{k}
\neq\pm \langle j\rangle$ as follows:
\begin{equation}\label{abzeta}
\left\{\begin{aligned}
&a\zeta_j^{\mathbf{k}}=c\zeta_j^{\mathbf{k}}\qquad \textmd{if}\
\mathbf{k}=\pm \langle
j\rangle, \quad \textmd{and 0 else},\\
&b\zeta_j^{\mathbf{k}}=c\zeta_j^{\mathbf{k}}\qquad \textmd{if}\
\eqref{inequa}\ \textmd{is satisfied}, \quad \textmd{and 0 else}.
\end{aligned}\right.
\end{equation}
We remark that for  $a\zeta = (a\zeta_j^{\mathbf{k}} )\in
\mathrm{H}^s$ and $b\zeta = (b\zeta_j^{\mathbf{k}} )\in
\mathrm{H}^s$,  one has $a\zeta + b\zeta = c\zeta$ and
$|||a\zeta|||_s^2+|||b\zeta|||_s^2=|||c\zeta|||_s^2$. The same
denotation and  property are used for  $c\xi$.

We try to rewrite  the iterations \eqref{Pic ite j} and \eqref{Pic
ite notj} in an abstract form
\begin{equation}\label{Abstruct Pic ite}
\left\{\begin{aligned} &a\dot{\zeta}^{(n+1)}=\Omega^{-1}F(a\zeta^{(n)},b\zeta^{(n)})-Aa\zeta^{(n)}, \\
 &b\zeta^{(n+1)}=\Omega^{-1}\Psi G(a\zeta^{(n)},b\zeta^{(n)})-Bb\zeta^{(n)},
\end{aligned}\right.
\end{equation}
where \begin{equation*}
\begin{aligned} &(\Omega x)_j^{\mathbf{k}}=(\omega_j+|\mathbf{k}\cdot\boldsymbol{\omega}|) x_j^{\mathbf{k}},\quad  (\Psi x)_j^{\mathbf{k}}=2\phi_2(h^2\omega^2_j)\cos(\frac{1}{2}h(\mathbf{k}\cdot\boldsymbol{\omega}))
x_j^{\mathbf{k}},
\end{aligned}
\end{equation*}
and the operators $A, B$  are respectively defined as
\begin{equation*}
\begin{aligned} &(Aa\zeta)_j^{\pm \langle j\rangle}(\tau)=\frac{1}{\pm2\mathrm{i}s_{\langle j\rangle}h \epsilon}\Big(\frac{1}{2} \epsilon^2
h^2\sec(\frac{1}{2}h\omega_j) a\ddot{\zeta}_j^{\pm  \langle
j\rangle}+\cdots\Big), \\
&(Bb\zeta)_j^{\mathbf{k}}(\tau)= \frac{c_{\mathbf{k}}}{2s_{\langle
j\rangle+\mathbf{k}}s_{\langle j\rangle-\mathbf{k}}}\Big(\textmd{i}
\epsilon h
 \frac{s_{\mathbf{k}}(1+c_{\langle j\rangle+\mathbf{k}}c_{\langle
j\rangle-\mathbf{k}})}{c_{\mathbf{k}}^2}b\dot{\zeta}_j^{\mathbf{k}}+\cdots\Big)
\quad \textmd{for} \ (j, \mathbf{k}) \ \textmd{satisfying
}\eqref{inequa}.
\end{aligned}
\end{equation*}
The functions $F =(F ^{\mathbf{k}}_ j)$ and $G = (G ^{\mathbf{k}}_
j)$ are given respectively by
\begin{equation*}
\begin{aligned} &F_j^{\pm \langle
j\rangle}(a\zeta,b\zeta)=\frac{1}{\mp
\textmd{i}\epsilon}\frac{2\phi_2(h^2\omega^2_j)}{
\textmd{sinc}(\frac{1}{2}h\omega_j)}f_j^{\pm \langle
j\rangle}(c\xi),\ \
 G_j^{\mathbf{k}}(a\zeta,b\zeta)=-\frac{h^2(\omega_j+|\mathbf{k}\cdot\boldsymbol{\omega}|)}{4s_{\langle j\rangle+\mathbf{k}}s_{\langle
j\rangle-\mathbf{k}}}f_j^{\mathbf{k}}(c\xi)
\end{aligned}
\end{equation*}
for   $(j,k)$ satisfying \eqref{inequa}.

Since \begin{equation*}
\begin{aligned}\abs{\frac{1}{\pm2\mathrm{i}s_{\langle j\rangle}h \epsilon} \frac{1}{2} \epsilon^2
h^2\sec(\frac{1}{2}h\omega_j) }= \abs{\frac{\frac{1}{2}h
\epsilon}{\sin( h\omega_j)}}\leq\frac{1}{2}
\epsilon^{1/2},\end{aligned}
\end{equation*}
 the operator
$A$ is bounded by
\begin{equation*}
\begin{aligned} &|||(Aa\zeta)(\tau)|||_s\leq C
\sum\limits_{l=2}^Nh^{l-2}\epsilon^{l-3/2}|||\frac{d^l}{d\tau^l}(a\zeta)(\tau)|||_s.
\end{aligned}
\end{equation*}
We then compute
\begin{equation*}
\begin{aligned}&\abs{\frac{c_{\mathbf{k}}}{2s_{\langle j\rangle+\mathbf{k}}s_{\langle
j\rangle-\mathbf{k}}} \textmd{i} \epsilon h
 \frac{s_{\mathbf{k}}(1+c_{\langle j\rangle+\mathbf{k}}c_{\langle
j\rangle-\mathbf{k}})}{c_{\mathbf{k}}^2}}\leq \abs{\frac{\epsilon
h}{\epsilon^{1/2}h^2(
 \omega_j+|\mathbf{k} \cdot\boldsymbol{\omega}|)}
 \frac{s_{\mathbf{k}}(1+c_{\langle j\rangle+\mathbf{k}}c_{\langle
j\rangle-\mathbf{k}})}{c_{\mathbf{k}}}}\\
&\leq  \frac{\epsilon^{1/2}}{h}\frac{\frac{h}{2}|\mathbf{k} \cdot
\boldsymbol{\omega}|}{
 \omega_j+|\mathbf{k} \cdot\boldsymbol{\omega}|}
 \abs{\frac{1+c_{\langle j\rangle+\mathbf{k}}c_{\langle
j\rangle-\mathbf{k}}}{c_{\mathbf{k}}}}\leq C \epsilon^{1/2},
\end{aligned}
\end{equation*}
where we used $\abs{s_{\mathbf{k}}}\leq \frac{h}{2}|\mathbf{k}
\cdot\boldsymbol{\omega}|$. Thus  $B$ is bounded by
\begin{equation*}
\begin{aligned}
&|||(Bb\zeta)(\tau)|||_s\leq C
\epsilon^{1/2}|||(b\dot{\zeta})(\tau)|||_s+C
\sum\limits_{l=2}^Nh^{l-2}\epsilon^{l-1/2}|||\frac{d^l}{d\tau^l}(b\zeta)(\tau)|||_s.
\end{aligned}
\end{equation*}

From $\abs{\frac{2\phi_2(h^2\omega^2_j)}{
\textmd{sinc}(\frac{1}{2}h\omega_j)}}=\abs{
\textmd{sinc}(\frac{1}{2}h\omega_j)}\leq 1$ and \eqref{bounds f12},
it follows that   $|||F|||_s\leq C\epsilon^{1/2}$. By considering
\eqref{inequa} and \eqref{bounds f12}, one gets   $|||G|||_s\leq C$.
Moreover, according to \eqref{further-non-res cond}, we have
\begin{equation*}
\begin{aligned} |||\Psi^{-1}\Omega^{-1}F|||^2_s=&\sum\limits_{\mathbf{k}\in
\mathcal{K}} \sum\limits_{|j|\leq
M}''\omega_j^{2s}\abs{(\Psi^{-1}\Omega^{-1}F)_j^{\mathbf{k}}}^2
=\sum\limits_{\mathbf{k}\in \mathcal{K}} \sum\limits_{|j|\leq
M}''\omega_j^{2s}\abs{ \frac{h/2}{\epsilon\sin(h\epsilon)}}^2
\abs{f_j^{\pm \langle j\rangle}}^2\\
\leq&C\sum\limits_{\mathbf{k}\in \mathcal{K}} \sum\limits_{|j|\leq
M}''\omega_j^{2s}\abs{ \frac{1}{\epsilon ^{3/2}}}^2 \abs{f_j^{\pm
\langle j\rangle}}^2=C\frac{1}{\epsilon ^3} |||f^{\pm \langle
j\rangle}|||^2_s\leq C,
\end{aligned}
\end{equation*}
which implies $|||\Psi^{-1}\Omega^{-1}F|||_s\leq C$.

For the initial value condition \eqref{pica initial pl}, it  can be
rewritten as
\begin{equation}\label{abs ini con}
\begin{aligned} &a\zeta^{(n+1)}(0)=v+Pb\zeta^{(n)}(0)+Qb\zeta^{(n)}(0),
\end{aligned}
\end{equation}
where $ v_j^{\pm\langle j\rangle
}=\frac{\omega_j}{\epsilon}\Big(\frac{1}{2}q_j(0)\mp\frac{\textmd{i}}{2\omega_j}p_j(0)\Big)
$ and the operators $P$ and $Q$   are defined by
\begin{equation*}
\begin{aligned}
(Pb\zeta)_j^{\pm\langle
j\rangle}(0)=&-\frac{1}{2}\frac{\omega_j}{\epsilon}\sum\limits_{\mathbf{k}\neq
\pm\langle j\rangle}\frac{\epsilon^{[[\mathbf{k}]]}}{\boldsymbol{\omega}^{|\mathbf{k}|}}b\zeta_j^{\mathbf{k}}(0),\\
 (Qb\zeta)_j^{\pm\langle
j\rangle}(0)=&\mp\frac{1}{2
\omega_j}\frac{\omega_j}{\epsilon}\sum\limits_{\mathbf{k}\neq
\pm\langle
j\rangle}\frac{\epsilon^{[[\mathbf{k}]]}}{\boldsymbol{\omega}^{|\mathbf{k}|}}b\eta_j^{\mathbf{k}}(0).
\end{aligned}
\end{equation*}
From \eqref{initi cond}, it can be verified that   $v$ is bounded in
$\mathrm{H}^s$. For the bounds of the  operators $P$ and $Q$, we
have
\begin{equation*}
\begin{aligned} |||Pb\zeta(0)|||^2_s=&\sum\limits_{\mathbf{k}\in
\mathcal{K}} \sum\limits_{|j|\leq
M}''\omega_j^{2s}\abs{\frac{1}{2}\frac{\omega_j}{\epsilon}\sum\limits_{\mathbf{k}\neq
\pm\langle
j\rangle}\frac{\epsilon^{[[\mathbf{k}]]}}{\boldsymbol{\omega}^{|\mathbf{k}|}}b\zeta_j^{\mathbf{k}}(0)}^2\\
\leq&\frac{1}{4\epsilon^2}\sum\limits_{\mathbf{k}\in \mathcal{K}}
\sum\limits_{|j|\leq M}''\omega_j^{2s+2}
\Big(\sum\limits_{\mathbf{k}\neq \pm\langle
j\rangle}\frac{\epsilon^{2[[\mathbf{k}]]}}{\boldsymbol{\omega}^{2|\mathbf{k}|}}\Big)\Big(\sum\limits_{\mathbf{k}\neq
\pm\langle j\rangle}b\zeta_j^{\mathbf{k}}(0) ^2\Big)\\
\leq&\frac{1}{4}\sum\limits_{\mathbf{k}\in \mathcal{K}}
\sum\limits_{|j|\leq M}''\omega_j^{2s+2}
\Big(\sum\limits_{\mathbf{k}\neq \pm\langle j\rangle}
\boldsymbol{\omega}^{-2|\mathbf{k}|}\Big)
\Big(\sum\limits_{\mathbf{k}\neq \pm\langle
j\rangle}b\zeta_j^{\mathbf{k}}(0) ^2\Big)\\
\leq& C |||\Omega b\zeta(0)|||^2_{s}\leq  C |||
b\zeta(0)|||^2_{s+1},
\end{aligned}
\end{equation*}
and similarly
\begin{equation*}
\begin{aligned} |||(Qb\zeta)(0)|||^2_s \leq C |||
b\eta(0)|||^2_{s}.
\end{aligned}
\end{equation*}
Thence the bounds $|||(Pb\zeta)(0)|||_s\leq C$ and $
|||(Qb\zeta)(0)|||_s\leq C$  are obtained.   The starting iterates
of \eqref{abs ini con} are chosen as $a\zeta^{(0)}(\tau ) = v$ and
$b\zeta^{(0)}(\tau ) = 0$.

\subsection{Bounds of the coefficient functions}\label{subsec:bou coe}
\begin{prop}\label{pro: functions} The modulation
functions $\zeta^{\mathbf{k}}$  of \eqref{MFE-AAVF}  are bounded by
\begin{equation}\begin{aligned}
&\sum\limits_{\norm{\mathbf{k}}\leq2N}\Big(\frac{\boldsymbol{\omega}^{|\mathbf{k}|}}{\epsilon^{[[\mathbf{k}]]}}\norm{\zeta^{\mathbf{k}}(\epsilon
t)}_s\Big)^2\leq C
\end{aligned}
\label{bound modu func}%
\end{equation}
and the same bound holds for any fixed number of derivatives of
 $\zeta^{\mathbf{k}}$ with respect to the slow time  $\tau = \epsilon
t$.
\end{prop}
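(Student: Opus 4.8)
The plan is to prove \eqref{bound modu func} by controlling the reverse Picard iterates $(a\zeta^{(n)},b\zeta^{(n)})$ of the abstract system \eqref{Abstruct Pic ite} uniformly in the iteration index $n$, working in the rescaled space $\mathrm{H}^s$ in which the rescaling \eqref{abzeta} was designed precisely so that the target estimate \eqref{bound modu func} reads $|||c\zeta|||_s\leq C$. Since the construction of Section \ref{subsec:rev pic} terminates after $4N$ iterations, it suffices to show that each iterate remains in a fixed ball of $\mathrm{H}^s$ on a short slow-time interval $0\leq\tau\leq\tau_0$ with $\tau_0=\mathcal{O}(1)$. Because $\tau$-derivatives of $c\zeta$ re-enter the right-hand sides (through the operators $A,B$ and through $c\xi=L_4^{\mathbf{k}}(\sigma)c\zeta$), I would prove bounds on $c\zeta$ and on a fixed number of its $\tau$-derivatives simultaneously, by induction on $n$, starting from the bounded initial iterate $a\zeta^{(0)}=v$, $b\zeta^{(0)}=0$.

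For the inductive step I would treat the two blocks separately. For the $a$-part I integrate the first line of \eqref{Abstruct Pic ite}, writing $a\zeta^{(n+1)}(\tau)=a\zeta^{(n+1)}(0)+\int_0^{\tau}\big(\Omega^{-1}F-Aa\zeta^{(n)}\big)\,\mathrm{d}s$; the initial value is controlled by \eqref{abs ini con} together with the established bounds $|||v|||_s\leq C$, $|||(Pb\zeta)(0)|||_s\leq C$ and $|||(Qb\zeta)(0)|||_s\leq C$, the integrand $\Omega^{-1}F$ is $\mathcal{O}(\epsilon^{1/2})$ since $|||F|||_s\leq C\epsilon^{1/2}$, and $A$ carries positive powers of $h$ and $\epsilon$ so that $Aa\zeta^{(n)}$ is a genuine higher-order correction. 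For the $b$-part the equation is algebraic, $b\zeta^{(n+1)}=\Omega^{-1}\Psi G-Bb\zeta^{(n)}$; here $\Omega^{-1}\Psi G$ is $\mathcal{O}(1)$ because $|||G|||_s\leq C$ while $\Psi$ (via $|\phi_2|\leq 1/2$) and $\Omega^{-1}$ are bounded, and $B$ supplies an $\mathcal{O}(\epsilon^{1/2})$ factor. Once $|||c\zeta^{(n)}|||_s$ is assumed bounded, the nonlinear inputs $F$ and $G$ are bounded through Proposition \ref{pro: f}, since $|||c\tilde{\xi}|||_s$ is controlled by $|||c\zeta|||_s$ and finitely many of its derivatives via the Taylor expansion of $L_4^{\mathbf{k}}$ in Proposition \ref{lhd pro}; the polynomials $P,P_1$ are then evaluated at an $\mathcal{O}(1)$ argument, which closes the bound for iterate $n+1$.

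The derivative bounds are obtained by differentiating \eqref{Abstruct Pic ite} in $\tau$ and repeating the same argument: each $\tau$-derivative of $F$, $G$, $A$, $B$ produces only lower-order derivatives already controlled at step $n$, together with the extra small factors attached to $A$, $B$, and the subleading terms of $L_4^{\mathbf{k}}$. Because only a fixed number of iterations ($4N$) and a fixed number of derivatives are ever needed, the constants stay bounded as $n$ grows, and passing to the final iterate (equivalently, the limit to which the $c\zeta^{(n)}$ converge) yields \eqref{bound modu func} together with its derivative analogue.

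I expect the main obstacle to be the bookkeeping that makes this induction self-consistent: the reverse Picard iteration is not a contraction, so no fixed-point principle can be invoked directly. The estimate must be propagated through the coupling between $a\zeta$ and $b\zeta$ and through the dependence of the right-hand sides on derivatives of $c\zeta$; the crux is to check that every term feeding back into the iteration either carries a positive power of $\epsilon$ (from $A$, $B$, $F$, or the higher terms of $L_4^{\mathbf{k}}$) or involves only quantities already bounded at the previous iteration order, so that the multiplicative growth over the $4N$ steps is absorbed into a single constant $C$ depending on $s,N,C_0$ but not on $\epsilon,M,h$.
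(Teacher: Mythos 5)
Your proposal is correct and takes essentially the same approach as the paper: the paper's own proof of Proposition \ref{pro: functions} is exactly an induction on the iterates $a\zeta^{(n)}, b\zeta^{(n)}$ and their $\tau$-derivatives for $n\leq 4N$ on $0\leq\tau\leq 1$, resting on the operator and nonlinearity bounds for $A$, $B$, $F$, $G$, $P$, $Q$, $v$ and Proposition \ref{pro: f} from Sections \ref{subsec:res est}--\ref{subsec:ref rev}, with the starting iterates $a\zeta^{(0)}=v$, $b\zeta^{(0)}=0$. The paper compresses this into a one-sentence appeal to induction; your write-up merely makes that bookkeeping explicit.
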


\begin{proof}
On the basis of the above analysis and by induction, it is easy to
prove that the iterates $a\zeta^{(n)},\ b\zeta^{(n)}$ and their
derivatives with respect to  $\tau$ are bounded in $\mathrm{H}^s$
for $0 \leq \tau\leq 1$ and $n \leq 4N$.   These bounds
  show that $c\zeta^{(n)}=a\zeta^{(n)}+ b\zeta^{(n)}$ is bounded in
$\mathrm{H}^s$ and then the bound  \eqref{bound modu func} is
obtained. \hfill\end{proof}

\begin{prop}\label{pro: bound of the e}
  The expansion \eqref{MFE-AAVF} is bounded by
\begin{equation}
\norm{\tilde{\mathbf{q}}(t)}_{s+1}+\norm{\tilde{\mathbf{p}}(t)}_{s}\leq
C\epsilon\quad for \quad 0\leq t\leq \epsilon^{-1}.
\label{bound TMFE}%
\end{equation}
 For $|j|\leq M$, it further holds that
\begin{equation}\begin{aligned}
&\tilde{q}_j(t)=\zeta_j^{\langle j\rangle}(\epsilon t)
\mathrm{e}^{\mathrm{i} \omega_j t}+\zeta_j^{-\langle
j\rangle}(\epsilon t) \mathrm{e}^{-\mathrm{i} \omega_j t}+r_j \quad
\textmd{with} \quad \norm{\mathbf{r}}_{s+1}\leq C\epsilon^2.
\end{aligned}
\label{jth TMFE}
\end{equation}
If the condition  \eqref{another-non-res cond} fails to be
satisfied, then the bound is $\norm{r}_{s+1}\leq C\epsilon^{3/2}$.
\end{prop}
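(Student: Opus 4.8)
The plan is to derive both estimates from the rescaled modulation bound \eqref{bound modu func} of Proposition \ref{pro: functions}, i.e. from $|||c\zeta|||_s^2=\sum_{\norm{\mathbf{k}}\leq 2N}\frac{\boldsymbol{\omega}^{2|\mathbf{k}|}}{\epsilon^{2[[\mathbf{k}]]}}\norm{\zeta^{\mathbf{k}}(\epsilon t)}_s^2\leq C$, by inserting the series \eqref{MFE-AAVF}, applying the triangle inequality together with the Cauchy--Schwarz inequality, and invoking Lemma \ref{lem 08}. Two facts produce the $\mathcal{O}(\epsilon)$ scale: every multi-index satisfies $[[\mathbf{k}]]\geq 1$, so each weight carries at least a factor $\epsilon$, and $\sum_{\norm{\mathbf{k}}\leq 2N}\boldsymbol{\omega}^{-2|\mathbf{k}|}\leq C$ by the first assertion of Lemma \ref{lem 08}; combining these against \eqref{bound modu func} is the mechanism that extracts the smallness.

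First I would prove $\norm{\tilde{\mathbf{q}}}_{s+1}+\norm{\tilde{\mathbf{p}}}_s\leq C\epsilon$. The crucial observation is that for the dominant indices $\mathbf{k}=\pm\langle j\rangle$ the rescaling weight is $\boldsymbol{\omega}^{|\langle j\rangle|}=\omega_j$, which supplies exactly the one extra power of $\omega_j$ needed to pass from the $H^s$ control in \eqref{bound modu func} to the $H^{s+1}$ norm of $\tilde{\mathbf{q}}$; concretely the diagonal part of \eqref{bound modu func} gives $\sum_{|j|\leq M}\omega_j^{2(s+1)}\abs{\zeta_j^{\pm\langle j\rangle}}^2\leq\epsilon^2|||c\zeta|||_s^2\leq C\epsilon^2$. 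The companion bound for $\tilde{\mathbf{p}}$ then follows from \eqref{eta modula sys}: the diagonal relation $\eta_j^{\pm\langle j\rangle}=\pm\mathrm{i}\omega_j\zeta_j^{\pm\langle j\rangle}+\mathcal{O}(h\epsilon)$ trades one power of $\omega_j$ for the drop from $H^{s+1}$ to $H^s$, so that $\sum_{|j|\leq M}\omega_j^{2s}\abs{\eta_j^{\pm\langle j\rangle}}^2\leq C\epsilon^2$, while the off-diagonal $\eta^{\mathbf{k}}$ are of higher order by the same mechanism as for $\zeta$.

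For the refined expansion \eqref{jth TMFE} I would split $\tilde q_j$ into the two diagonal contributions $\mathbf{k}=\pm\langle j\rangle$, which form the displayed leading part, and the remainder $r_j=\sum_{\mathbf{k}\neq\pm\langle j\rangle}\mathrm{e}^{\mathrm{i}(\mathbf{k}\cdot\boldsymbol{\omega})t}\zeta_j^{\mathbf{k}}$. Each off-diagonal coefficient is read off from the algebraic iteration \eqref{Pic ite notj}, whose leading ($m=2$) source is a product of two diagonal modes and is therefore of size $\mathcal{O}(\epsilon^2)$ in $H^{s+1}$ by the Sobolev-algebra estimates of Lemma \ref{lem 08}. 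The decisive factor is $h^2\phi_2(h^2\omega_j^2)=(1-\cos(h\omega_j))/\omega_j^2$, which contributes a decay $\omega_j^{-2}$ taming the $H^{s+1}$ weight, divided by the small resonance denominator $s_{\langle j\rangle+\mathbf{k}}s_{\langle j\rangle-\mathbf{k}}$. Under \eqref{another-non-res cond} the lower bound $\abs{s_{\langle j\rangle+\mathbf{k}}s_{\langle j\rangle-\mathbf{k}}}\geq c h^2\abs{2\phi_2(h^2\omega_j^2)}$ cancels this $\phi_2$ factor, so $\zeta_j^{\mathbf{k}}$ is a bounded multiple of the quadratic source and $\norm{\mathbf{r}}_{s+1}\leq C\epsilon^2$; if only the generic condition \eqref{inequa} is available, the denominator is bounded below merely by $\epsilon^{1/2}h^2(\omega_j+\abs{\mathbf{k}\cdot\boldsymbol{\omega}})$, which costs an extra factor $\epsilon^{-1/2}$ and yields $\norm{\mathbf{r}}_{s+1}\leq C\epsilon^{3/2}$.

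The main obstacle is precisely the treatment of these near-resonant quadratic modes $\mathbf{k}=\pm\langle j_1\rangle\pm\langle j_2\rangle$ with $j=j_1+j_2$: one must sum them over all $j$ in the $H^{s+1}$ norm while tracking the exact power of $\epsilon$ carried by the resonance denominator, and the dichotomy $\epsilon^2$ versus $\epsilon^{3/2}$ in the statement is exactly the reflection of which lower bound, \eqref{another-non-res cond} or \eqref{inequa}, can be invoked. Once this critical case is settled, verifying that the remaining off-diagonal modes contribute at strictly higher order in $\epsilon$ is a routine consequence of \eqref{bound modu func} and Lemma \ref{lem 08}.
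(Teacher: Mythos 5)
Your overall architecture matches the paper's proof in outline (the diagonal modes $\mathbf{k}=\pm\langle j\rangle$ carry the leading part, and the quadratic modes $\mathbf{k}=\pm\langle j_1\rangle\pm\langle j_2\rangle$ with $j=j_1+j_2$ decide the dichotomy $\epsilon^2$ versus $\epsilon^{3/2}$ through \eqref{another-non-res cond} versus \eqref{inequa}; your treatment of those critical quadratic modes is essentially the paper's). But there are two genuine gaps. First, \eqref{bound modu func} cannot by itself yield the $H^{s+1}$ bound \eqref{bound TMFE}. The rescaling weight $\boldsymbol{\omega}^{|\mathbf{k}|}$ depends only on $\mathbf{k}$, not on the component index $j$: for the diagonal entries $\zeta_j^{\pm\langle j\rangle}$ it happens to equal $\omega_j$, which is why your diagonal computation is correct, but for an off-diagonal pair --- say $\mathbf{k}=\pm\langle l\rangle$ with $l$ small and $|j|$ large, or $\mathbf{k}=\mathbf{0}$ --- the weight supplies no power of $\omega_j$ at all. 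Hence \eqref{bound modu func} controls only $\norm{\zeta^{\mathbf{k}}}_s$, and passing to $\norm{\cdot}_{s+1}$ costs a factor as large as $\omega_M\sim M$, destroying the required uniformity in $M$; ``the same mechanism'' as in the diagonal case is simply not available. The paper avoids this by first proving the $\Omega$-weighted iterate bounds \eqref{Bounds func zeta}, in particular $|||\Psi^{-1}\Omega\, b\zeta|||_s\leq C$, whence $|||b\zeta|||_{s+1}\leq|||\Omega b\zeta|||_s\leq C$, together with $|||\Omega\, a\dot{\zeta}(\tau)|||_s\leq C\epsilon^{1/2}$ to control the drift $a\zeta(\tau)-a\zeta(0)$ in $H^{s+1}$, and only then applying Cauchy--Schwarz and Lemma \ref{lem 08} as you propose. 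The same weighted bounds are what give $\norm{\tilde{\mathbf{p}}}_s\leq C\epsilon$ via \eqref{qp connec}; your appeal to \eqref{eta modula sys} alone runs into the same missing power of $\omega_j$ for the off-diagonal $\eta^{\mathbf{k}}$ (and the quotient of tangents appearing there is not uniformly bounded either).

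Second, your closing claim that the remaining off-diagonal modes are of strictly higher order ``as a routine consequence of \eqref{bound modu func} and Lemma \ref{lem 08}'' fails for the modes with $\norm{\mathbf{k}}=1$, i.e.\ $\mathbf{k}=\pm\langle l\rangle$ read in a component $j$ with $|j|\neq l$. These have $[[\mathbf{k}]]=1$, so \eqref{bound modu func} bounds them only by $\mathcal{O}(\epsilon)$ --- the same order as the leading diagonal terms --- and without further argument they would pollute $\mathbf{r}$ already at order $\epsilon$, ruining both $C\epsilon^2$ and the fallback $C\epsilon^{3/2}$. The paper removes them by a separate ingredient: the improved nonlinear estimate, namely the second bound in \eqref{bounds f12}, $\sum_{|j|\leq M}\norm{f^{\pm\langle j\rangle}(c\xi)}_s^2\leq C\epsilon^3$ (resting on $[[\mathbf{k}^1]]+\cdots+[[\mathbf{k}^m]]\geq 5/2$ whenever $\mathbf{k}^1+\cdots+\mathbf{k}^m=\pm\langle j\rangle$), pushed through the abstract iteration \eqref{Abstruct Pic ite} to give $\big(\sum_{\norm{\mathbf{k}}=1}\norm{(\Psi^{-1}\Omega\, b\zeta)^{\mathbf{k}}}_s^2\big)^{1/2}\leq C\epsilon$, which after undoing the scaling is $\mathcal{O}(\epsilon^2)$ in $H^{s+1}$. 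If you add these two ingredients --- the $\Omega$-weighted bounds \eqref{Bounds func zeta} for part one, and the improved first-order-mode estimate for part two --- your argument aligns with the paper's proof; as written, both estimates of the proposition remain unproved.
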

\begin{proof}
More   precisely,   the following bounds for the $(4N)$-th iterates
can be obtained
\begin{equation}\label{Bounds func zeta}
\begin{array}{ll}   |||a\zeta(0)|||_s\leq C,\ \ &|||\Omega a\dot{\zeta}(\tau)|||_s\leq
C\epsilon^{1/2},\\
|||\Psi^{-1}a\dot{\zeta}(\tau)|||_s\leq C,\ \ &|||\Psi^{-1}\Omega b
\zeta (\tau)|||_s\leq C,
\end{array}
\end{equation}
where $C$ depends on $N$, but not on $\epsilon, h, M$. From these
results \eqref{Bounds func zeta}, it follows that
\begin{equation*}
\begin{aligned} |||a\dot{\zeta}  |||_{s+1}&=|||\Omega a\dot{\zeta}  |||_{s}\leq
C\epsilon^{1/2},\\
|||b\zeta  |||^2_{s+1}&=\sum\limits_{\mathbf{k}\in \mathcal{K}}
\sum\limits_{|j|\leq M}''\omega_j^{2s+2}\abs{b\zeta_j
}^2=\sum\limits_{\mathbf{k}\in \mathcal{K}} \sum\limits_{|j|\leq
M}''\omega_j^{2s}\frac{\omega_j^{2}}{(\omega_j+|\mathbf{k}\cdot
\boldsymbol{\omega}|)^2}\abs{(\omega_j+|\mathbf{k}\cdot\boldsymbol{\omega}|)b\zeta_j }^2\\
&\leq |||\Omega b\zeta  (\tau)|||^2_{s}\leq C.
\end{aligned}
\end{equation*}
Hence, we have
\begin{equation*}
\begin{aligned}|||c\zeta (\tau)-a\zeta(0)|||_{s+1}=|||a\zeta (\tau)+b\zeta
(\tau)-a\zeta(0)|||_{s+1}  \leq |||a\dot{\zeta} |||_{s+1}+|||b\zeta
|||_{s+1}\leq C.
\end{aligned}
\end{equation*}
Then according to the fact that
$\zeta_j^{\mathbf{k}}=\frac{\epsilon^{[[\mathbf{k}]]}}{\boldsymbol{\omega}^{|\mathbf{k}|}}(c\zeta_j^{\mathbf{k}}-a\zeta_j^{\mathbf{k}}(0)+a\zeta_j^{\mathbf{k}}(0)),
$  it is yielded that
\begin{equation*}
\begin{aligned}
&|||\tilde{\mathbf{q}}  |||^2_{s+1}=\sum\limits_{\mathbf{k}\in
\mathcal{K}} \sum\limits_{|j|\leq
M}''\omega_j^{2s+2}\abs{\sum\limits_{\norm{\mathbf{k}}\leq 2N}
\mathrm{e}^{\mathrm{i}(\mathbf{k} \cdot\boldsymbol{\omega}) t}\zeta_j^{\mathbf{k}}}^2\\
\leq&\sum\limits_{\mathbf{k}\in \mathcal{K}} \sum\limits_{|j|\leq
M}''\omega_j^{2s+2}\Big[\frac{\epsilon}{\omega_j}\big(\abs{a\zeta_j^{\langle
j\rangle}(0)}+\abs{a\zeta_j^{-\langle j\rangle}(0)}\big)
+\sum\limits_{\norm{\mathbf{k}}\leq 2N}\frac{\epsilon^{[[\mathbf{k}]]}}{\boldsymbol{\omega}^{|\mathbf{k}|}}\abs{c\zeta_j^{\mathbf{k}}-a\zeta_j^{\mathbf{k}}(0)}\Big]^2\\
\leq&2\epsilon^2\sum\limits_{\mathbf{k}\in \mathcal{K}}
\sum\limits_{|j|\leq M}''\omega_j^{2s}\Big(  \abs{a\zeta_j^{\langle
j\rangle}(0)}+\abs{a\zeta_j^{-\langle j\rangle}(0)}\Big)^2\\
& +2\sum\limits_{\mathbf{k}\in \mathcal{K}}\sum\limits_{|j|\leq
M}''\omega_j^{2s+2}\Big(
\sum\limits_{\norm{\mathbf{k}}\leq 2N}\frac{\epsilon^{[[\mathbf{k}]]}}{\boldsymbol{\omega}^{|\mathbf{k}|}}\abs{c\zeta_j^{\mathbf{k}}-a\zeta_j^{\mathbf{k}}(0)}\Big)^2\\
\leq&4\epsilon^2 |||a\zeta (0)|||_s^2 +2\sum\limits_{\mathbf{k}\in
\mathcal{K}}\sum\limits_{|j|\leq M}''\omega_j^{2s+2}
\Big(\sum\limits_{\norm{\mathbf{k}}\leq 2N}\frac{\epsilon^{2[[\mathbf{k}]]}}{\boldsymbol{\omega}^{2|\mathbf{k}|}}\Big)\Big(\sum\limits_{\norm{\mathbf{k}}\leq 2N}\abs{c\zeta_j^{\mathbf{k}}-a\zeta_j^{\mathbf{k}}(0)}^2 \Big)\\
\leq&4\epsilon^2 |||a\zeta (0)|||_s^2
+2C_{K,1}\epsilon^2|||c\zeta-a\zeta(0)|||^2_{s+1} \leq C \epsilon^2.
\end{aligned}
\end{equation*}
By \eqref{qp connec} and similar analysis, it can be proved that
$|||\tilde{\mathbf{p}} |||_{s}\leq C \epsilon$. Thus   the bound
\eqref{bound TMFE} is true.

From \eqref{bounds f12} and \eqref{Abstruct Pic ite}, it follows
that $\Big(\sum\limits_{\norm{\mathbf{k}}=1}\norm{(\Psi^{-1}\Omega b
\zeta)^{\mathbf{k}}}_s^2\Big)^{1/2}\leq C \epsilon$ for $b \zeta=(b
\zeta)^{(4N)}$. Moreover, in terms  of \eqref{another-non-res cond},
it is obtained that $$ \sum\limits_{|j|\leq
M}\sum\limits_{j_1+j_2=j}\sum\limits_{\mathbf{k}=\pm\langle
j_1\rangle\pm\langle j_2\rangle}
 \omega_j^{2(s+1)}|b\zeta_j^{\mathbf{k}}|^2 \leq C \epsilon.$$
  These bounds  as well as   \eqref{Bounds func zeta} lead to
  \eqref{jth TMFE}.
\hfill\end{proof}

 For the alternative scaling \eqref{diff resca}, one can obtain the
same bounds
\begin{equation}\label{Bounds  zeta sca}
\begin{aligned} & |||\hat{a}\zeta(0)|||_1\leq C,\ \ |||\Omega \hat{a}\dot{\zeta}(\tau)|||_1\leq
C\epsilon^{1/2},\ \  |||\Psi^{-1}\Omega \hat{b} \zeta
(\tau)|||_1\leq C.
\end{aligned}
\end{equation}
The following bound is also true   for this scaling:
\begin{equation}\label{Bounds  eta sca}
\begin{aligned}
&\Big(\sum\limits_{\norm{\mathbf{k}}=1}\norm{(\Psi^{-1}\Omega
\hat{b} \zeta)^{\mathbf{k}}}_1^2\Big)^{1/2}\leq C \epsilon.
\end{aligned}
\end{equation}

\subsection{Defects}\label{subsec:def}
We express the defect in \eqref{AAVFmethod} as another form
\begin{equation}\label{defects}
\begin{aligned}
&\delta_j(t)=\frac{\tilde{q}_j(t+h)-2\cos(h\omega_j)\tilde{q}_j(t)+\tilde{q}_j(t-h)}{h^2\phi_2(h^2\omega^2_j)}\\
&-\Big[\displaystyle\int_{0}^{1}f_j((1-\sigma)\tilde{\mathbf{q}}_{h}(t)+\sigma
\tilde{\mathbf{q}}_{h}(t+h))d\sigma+
\displaystyle\int_{0}^{1}f_j((1-\sigma)\tilde{\mathbf{q}}_{h}(t-h)+\sigma
\tilde{\mathbf{q}}_{h}(t))d\sigma\Big],
\end{aligned}
\end{equation}
where $\tilde{q}_j$  is given in \eqref{MFE-AAVF} with
$\zeta^{\mathbf{k}}_j=(\zeta^{\mathbf{k}}_j)^{(4N)}$  obtained after
$4N$ iterations of the procedure in Sect. \ref{subsec:rev pic}. This
defect can also be rewritten  as
\begin{equation*}
\begin{aligned}
&\delta_j(t)=\sum\limits_{\norm{\mathbf{k}}\leq
NK}\mathbf{d}^{\mathbf{k}}(\epsilon t)
e^{\textmd{i}(\mathbf{k}\cdot\boldsymbol{\omega}) t}+R(t),
\end{aligned}
\end{equation*}
where
\begin{equation}\label{djk}
\begin{aligned}
d_j^{\mathbf{k}}=&\frac{1}{h^2\phi_2(h^2\omega^2_j)}\tilde{L}_j^{\mathbf{k}}
\zeta_j^{\mathbf{k}}+\sum\limits_{m= 2}^N\frac{g^{(m)}(0)}{m!}
\sum\limits_{\mathbf{k}^1+\cdots+\mathbf{k}^m=\mathbf{k}}\sum\limits_{j_1+\cdots+j_m\equiv
j\
\textmd{mod}\ 2M}'\\
&\int_{0}^{1} \Big[\big(\xi_{j_1}^{\mathbf{k}^1}\cdot\ldots\cdot
\xi_{j_m}^{\mathbf{k}^m}\big)(t\epsilon,\sigma)\Big]d\sigma.
\end{aligned}
\end{equation}
It is noted that here we consider  $\norm{\mathbf{k}}\leq NK$ for
$d_j^{\mathbf{k}}$, and it is assumed that
$\zeta_j^{\mathbf{k}}=\eta_j^{\mathbf{k}}=0$ for
$\norm{\mathbf{k}}>K:=2N$. The truncation of the operator
$L_j^{\mathbf{k}}$ after the $\epsilon^N$ term is denoted by
$\tilde{L}_j^{\mathbf{k}}$. The remainder terms of the Taylor
expansion of $f$ after $N$ terms are contained in $R$. By  the bound
\eqref{bound TMFE} and the estimates \eqref{Bounds func zeta}, it is
true that
 $\norm{R}_{s+1} \leq
C\epsilon^{N+1}$.

  Using  Cauchy-Schwarz inequality and  Lemma \ref{lem 08},
we obtain
 \begin{equation*}
\begin{aligned}
&\norm{\sum\limits_{\norm{\mathbf{k}}\leq
NK}\mathbf{d}^{\mathbf{k}}(\epsilon t)
e^{\textmd{i}(\mathbf{k}\cdot\boldsymbol{\omega}) t}}_{s}^2=
\sum\limits_{|j|\leq
M}''\omega_j^{2s}\abs{\sum\limits_{\norm{\mathbf{k}}\leq
NK}d_j^{\mathbf{k}}
e^{\textmd{i}(\mathbf{k}\cdot\boldsymbol{\omega}) t}}^2\\
 = & \sum\limits_{|j|\leq
M}''\omega_j^{2s}\abs{\sum\limits_{\norm{\mathbf{k}}\leq
NK}\boldsymbol{\omega}^{-|\mathbf{k}|}(\boldsymbol{\omega}^{|\mathbf{k}|}d_j^{\mathbf{k}} e^{\textmd{i}(\mathbf{k}\cdot\boldsymbol{\omega}) t})}^2\\
\leq & \sum\limits_{|j|\leq M}''\omega_j^{2s}\Big(
\sum\limits_{\norm{\mathbf{k}}\leq
NK}\boldsymbol{\omega}^{-2|\mathbf{k}|}\Big)\Big(\sum\limits_{\norm{\mathbf{k}}\leq
NK}(\boldsymbol{\omega}^{|\mathbf{k}|}d_j^{\mathbf{k}} )^2\Big)\\
\leq& C_{NK,1} \sum\limits_{\norm{\mathbf{k}}\leq
NK}\norm{\boldsymbol{\omega}^{|\mathbf{k}|}\mathbf{d}^{\mathbf{k}}(\epsilon
t)}_{s}^2.
\end{aligned}
\end{equation*}
 The right-hand side of this result can be estimated as
follows.
\begin{prop}\label{pro: bound of d}
It is obtained that $ \sum\limits_{\norm{\mathbf{k}}\leq
NK}\norm{\boldsymbol{\omega}^{|\mathbf{k}|}\mathbf{d}^{\mathbf{k}}(\epsilon
t)}_{s}^2\leq C\epsilon^{2(N+1)}. $

\end{prop}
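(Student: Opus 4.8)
The weighted coefficient $\boldsymbol{\omega}^{|\mathbf{k}|}\mathbf{d}^{\mathbf{k}}$ is precisely the residual obtained by inserting the final iterate $\zeta^{\mathbf{k}}=(\zeta^{\mathbf{k}})^{(4N)}$ into the truncated modulation equation \eqref{ljkqp}. The plan is to read off this residual from the reverse Picard iteration of Sect.~\ref{subsec:rev pic}. Comparing the definition \eqref{djk} of $d_j^{\mathbf{k}}$ with the iteration steps \eqref{Pic ite j} and \eqref{Pic ite notj}, the leading part of $\frac{1}{h^2\phi_2(h^2\omega_j^2)}\tilde{L}_j^{\mathbf{k}}\zeta_j^{\mathbf{k}}$ evaluated at the $(4N)$-th iterate cancels, by construction of the iteration, the nonlinear term evaluated at the $(4N-1)$-th iterate, so that $d_j^{\mathbf{k}}$ reduces to (i) the difference of the nonlinearity at two consecutive iterates together with the subdominant part of $\tilde{L}_j^{\mathbf{k}}$ applied to the increment $\zeta^{(4N)}-\zeta^{(4N-1)}$, and (ii) the tail of the Taylor expansion \eqref{L expansion} of $L^{\mathbf{k}}$ beyond its $\epsilon^N$ term (the Taylor remainder of $g$ past $m=N$ having already been moved into $R$, which satisfies $\norm{R}_{s+1}\leq C\epsilon^{N+1}$).

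For part (i) I would use the defect recursion that this decomposition encodes, namely $d^{(n)}=\mathcal{M}\,d^{(n-1)}+\tau$, where the increment obeys $\zeta^{(n)}-\zeta^{(n-1)}=-L_0^{-1}d^{(n-1)}$ ($L_0$ being the dominant part of $\tilde{L}$), $\tau$ is the $L$-truncation source of part (ii), and $\mathcal{M}$ combines the subdominant operators $A,B$ of \eqref{Abstruct Pic ite} with the linearised nonlinearity composed with $L_0^{-1}$. The central structural fact is that $\mathcal{M}$ is bounded by $C\epsilon^{1/2}$ in the rescaled norm: the contribution of $A,B$ is $\mathcal{O}(\epsilon^{1/2})$ by Sect.~\ref{subsec:ref rev}, while the nonlinear contribution is $\mathcal{O}(\epsilon^{1/2})$ because $g$ vanishes to second order, so that differentiating $\mathbf{f}$ and composing with $L_0^{-1}$ gains one half-power of $\epsilon$ (here the estimates of Proposition~\ref{pro: f} and the a priori bounds of Proposition~\ref{pro: functions} are used). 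Unrolling the recursion, $d^{(4N)}=\mathcal{M}^{4N}d^{(0)}+\sum_{k\geq0}\mathcal{M}^{k}\tau$: the transient $\mathcal{M}^{4N}d^{(0)}$ is $\mathcal{O}(\epsilon^{2N})$ and the geometric sum is controlled by $\tau=\mathcal{O}(\epsilon^{N+1})$, so part (i) lies at or below the truncation level $\epsilon^{N+1}$.

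Next I would deal with the near-resonant indices and the $\mathbf{k}$-summation. For $(j,\mathbf{k})\in\mathcal{R}_{\epsilon,h}$ we set $\zeta_j^{\mathbf{k}}\equiv0$, so $d_j^{\mathbf{k}}$ reduces to its pure nonlinear part, and the non-resonance assumption \eqref{non-resonance cond} (with $s\geq\sigma+1$) turns the factor $\omega_j^{\sigma}/\boldsymbol{\omega}^{\sigma|\mathbf{k}|}$ times $\epsilon^{\norm{\mathbf{k}}/2}$ into $\mathcal{O}(\epsilon^N)$, again leaving a residual of the required order. Collecting the two parts and summing the squared contributions over $\norm{\mathbf{k}}\leq NK$ --- pulling out the weight $\boldsymbol{\omega}^{-|\mathbf{k}|}$ by Cauchy--Schwarz and invoking Lemma~\ref{lem 08} to bound $\sum_{\mathbf{k}}\boldsymbol{\omega}^{-2|\mathbf{k}|}$ as well as the convolution sums generated by the products --- then gives $\sum_{\norm{\mathbf{k}}\leq NK}\norm{\boldsymbol{\omega}^{|\mathbf{k}|}\mathbf{d}^{\mathbf{k}}(\epsilon t)}_{s}^2\leq C\epsilon^{2(N+1)}$.

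The step I expect to be the main obstacle is the careful bookkeeping of $\epsilon$-powers through the rescaling $c\zeta_j^{\mathbf{k}}=(\boldsymbol{\omega}^{|\mathbf{k}|}/\epsilon^{[[\mathbf{k}]]})\zeta_j^{\mathbf{k}}$. One must verify that when the subdominant Taylor terms of $L^{\mathbf{k}}$ --- which, relative to the leading term, carry extra powers of $h\epsilon$ as displayed in \eqref{L expansion} --- are combined with the rescaling factor and with $1/(h^2\phi_2(h^2\omega_j^2))$, they reproduce exactly the $\epsilon^{1/2}$ estimates recorded for $A$ and $B$, and that the nonlinear increment aligns so that the Cauchy--Schwarz splitting against $\sum_{\mathbf{k}}\boldsymbol{\omega}^{-2|\mathbf{k}|}$ closes at order $\epsilon^{2(N+1)}$. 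This accounting, complicated here by the factor $2\phi_2(h^2\omega_j^2)/\mathrm{sinc}(\frac{1}{2}h\omega_j)$ and by the $\sigma$-integral that are specific to the AAVF method, is the delicate part; once it is in place the bound follows along the lines of the defect estimates in \cite{Hairer08,Cohen08-1}.
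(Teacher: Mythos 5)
Your strategy is in essence the paper's own: for non-resonant modes you identify $\boldsymbol{\omega}^{|\mathbf{k}|}d_j^{\mathbf{k}}$ with (a rescaled multiple of) the difference of consecutive reverse Picard iterates and exploit smallness of the iteration map, you kill the near-resonant modes with \eqref{non-resonance cond}, and you close the $\mathbf{k}$-summation with Cauchy--Schwarz and Lemma \ref{lem 08}. But there is a genuine gap: the sum in the proposition runs over $\norm{\mathbf{k}}\leq NK=2N^2$, while the modulation functions are constructed only for $\norm{\mathbf{k}}\leq K=2N$ and are set to zero beyond. For $K<\norm{\mathbf{k}}\leq NK$ the defect is \emph{not} part of the remainder $R$ (these modes arise from products of up to $N$ coefficient functions with $\norm{\mathbf{k}^i}\leq 2N$ and sit inside the sum \eqref{djk}), and it does not fit your decomposition (i)+(ii) either, since for such $\mathbf{k}$ there is no modulation equation and no $L$-expansion: $d_j^{\mathbf{k}}$ is the pure nonlinear convolution term. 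The paper disposes of these truncated modes separately, writing $d_j^{\mathbf{k}}=\epsilon^{[[\mathbf{k}]]}\boldsymbol{\omega}^{-|\mathbf{k}|}f_j^{\mathbf{k}}(c\xi)$ and using the counting $2[[\mathbf{k}]]=\norm{\mathbf{k}}+1\geq K+2=2(N+1)$ together with $|||f|||_s^2\leq C\epsilon$; your proposal never invokes this, and your remark that the Taylor remainder of $g$ past $m=N$ ``has been moved into $R$'' does not cover it.

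A second, structural flaw: your central claim that the iteration map $\mathcal{M}$ contracts with factor $C\epsilon^{1/2}$ \emph{per step} contradicts the paper's own observation (following Sect.~6.9 of \cite{Cohen08-1}) that $|||G|||_s\leq C$ only, i.e.\ the derivative of the $b$-update with respect to the diagonal variables $\tilde{a}\zeta$ is merely $\mathcal{O}(1)$: the small divisor $s_{\langle j\rangle+\mathbf{k}}s_{\langle j\rangle-\mathbf{k}}$ permitted by \eqref{inequa} consumes exactly the half-power of $\epsilon$ you propose to gain from $g'(0)=0$. One gains $\epsilon^{1/2}$ only over the composition of \emph{two} iteration steps (the Jacobian is triangular-dominant, and its square is $\mathcal{O}(\epsilon^{1/2})$ in all blocks), which is precisely why $4N$ iterations rather than $2N$ are performed. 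Your unrolled recursion $d^{(4N)}=\mathcal{M}^{4N}d^{(0)}+\sum_k\mathcal{M}^k\tau$ can be salvaged --- the geometric sum only needs boundedness of $\mathcal{M}$ over finitely many terms, and the transient still decays through the two-step contraction once the extra $\epsilon$-powers carried by the rescalings $\alpha_j^{\pm\langle j\rangle}\sim\epsilon^2$, $\beta_j^{\mathbf{k}}$ are accounted for --- but as written the one-step contraction estimate is false, and the bookkeeping must be redone in the paper's transformed variables to recover \eqref{right hand esti non}. With these two repairs (and the near-resonant case handled, as you correctly indicate, in the $\boldsymbol{\omega}^{s|\mathbf{k}|}$-rescaling with $\sigma=s-1$), your argument coincides with the paper's proof.
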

\begin{proof}
We will prove this result for three cases: truncated, near-resonant
and non-resonant modes.

 $\bullet$ \textbf{Truncated    and near-resonant modes.} For truncated modes
($\zeta_j^{\mathbf{k}}=\eta_j^{\mathbf{k}}=0$ for
$\norm{\mathbf{k}}>K:=2N$) and near-resonance modes ($(j,
\mathbf{k}) \in\mathcal{R}_{\epsilon,h}$), the defect is of  the
same form
\begin{equation*}
\begin{aligned}
d_j^{\mathbf{k}}=&\sum\limits_{m= 2}^N\frac{g^{(m)}(0)}{m!}
\sum\limits_{\mathbf{k}^1+\cdots+\mathbf{k}^m=\mathbf{k}}\sum\limits_{j_1+\cdots+j_m\equiv
j\ \textmd{mod}\ 2M}'\int_{0}^{1}
\Big[\big(\xi_{j_1}^{\mathbf{k}^1}\cdot\ldots\cdot
\xi_{j_m}^{\mathbf{k}^m}\big)(t\epsilon,\sigma)\Big]d\sigma.
\end{aligned}
\end{equation*}

For truncated modes  the defect is rewritten  as
$d_j^{\mathbf{k}}=\epsilon^{[[\mathbf{k}]]}\boldsymbol{\omega}^{-|\mathbf{k}|}
f_j^{\mathbf{k}} (c\xi). $ By \eqref{bounds f12}, and \eqref{Bounds
func zeta} with $NK$ instead of $K$, the bound $|||f|||_s^2\leq C
\epsilon$ is clear, which leads to
\begin{equation*}
\begin{aligned}
&\sum\limits_{\norm{\mathbf{k}}>K}\sum\limits_{|j|\leq
M}'\omega_j^{2s}|\boldsymbol{\omega}^{|\mathbf{k}|}d_j^{\mathbf{k}}|^2\leq
\sum\limits_{\norm{\mathbf{k}}>K}\sum\limits_{|j|\leq
M}'\omega_j^{2s}|f_j^{\mathbf{k}}|^2\epsilon^{2[[\mathbf{k}]]}\leq
C\epsilon^{2(N+1)},
\end{aligned}
\end{equation*}
where the fact that $2[[\mathbf{k}]]= \norm{\mathbf{k}} + 1 \geq K +
2 = 2(N + 1)$ is used.

 For the near-resonant modes   the  defect can be expressed in    the rescaling
\eqref{diff resca} as $d_j^{\mathbf{k}}=
\epsilon^{[[\mathbf{k}]]}\boldsymbol{\omega}^{-s|\mathbf{k}|}
\hat{f}_j^{\mathbf{k}}(\hat{c}\xi). $ Therefore, by considering
$|||\hat{f}|||_1^2\leq C \epsilon$ and the non-resonance condition
\eqref{non-resonance cond} with $\sigma =s - 1$, it is easy to check
that
\begin{equation*}
\begin{aligned}
 \sum\limits_{(j,k)\in
\mathcal{R}_{\epsilon,h}}\omega_j^{2s}|\boldsymbol{\omega}^{|\mathbf{k}|}d_j^{\mathbf{k}}|^2&=\sum\limits_{(j,k)\in
\mathcal{R}_{\epsilon,h}}\frac{\omega_j^{2(s-1)}}{\boldsymbol{\omega}^{2(s-1)|\mathbf{k}|}}\epsilon^{2[[\mathbf{k}]]}\omega_j^2|\hat{f}_j^{\mathbf{k}}|^2\\
&\leq C\sup_{(j,k)\in
\mathcal{R}_{\epsilon,h}}\frac{\omega_j^{2(s-1)}}{\boldsymbol{\omega}^{2(s-1)|\mathbf{k}|}}\epsilon^{2[[\mathbf{k}]]+1}\leq
C\epsilon^{2(N+1)}.
\end{aligned}
\end{equation*}

 $\bullet$ \textbf{Non-resonant mode.} For  the
non-resonant mode ($\norm{\mathbf{k}}>K$ and   $(j, \mathbf{k})$
satisfies \eqref{inequa}),   we reformulate the defect   in the
scaled variables of Sect. \ref{subsec:res est}  as
\begin{equation*}
\begin{aligned}
\boldsymbol{\omega}^{|\mathbf{k}|}d_j^{\mathbf{k}}=&
\epsilon^{[[\mathbf{k}]]}
\Big(\frac{1}{h^2\phi_2(h^2\omega^2_j)}\tilde{L}_j^{\mathbf{k}}
c\zeta_j^{\mathbf{k}}+f_j^{\mathbf{k}}(c\xi)\Big).
\end{aligned}
\end{equation*}
Splitting  them into  $\mathbf{k}={\pm\langle j\rangle}$ and
$\mathbf{k}\neq{\pm\langle j\rangle}$ yields
\begin{equation*}
\begin{array}{ll}
\omega_jd_j^{\pm\langle j\rangle}= \epsilon \Big(\pm
\textmd{i}\epsilon \omega_j
\frac{\textmd{sinc}(h\omega_j/2)}{\phi_2(h^2\omega^2_j)}
\big(a\dot{\zeta}_j^{\pm\langle j\rangle}+(Aa\zeta)_j^{\pm\langle
j\rangle}\big)+f_j^{\pm\langle j\rangle} (c\xi)\Big),\\
\boldsymbol{\omega}^{|\mathbf{k}|}d_j^{\mathbf{k}}=\epsilon^{[[\mathbf{k}]]}
\Big(\frac{2s_{\langle j\rangle+\mathbf{k}}s_{\langle
j\rangle-\mathbf{k}}}{h^2c_{\mathbf{k}}\phi_2(h^2\omega^2_j)}\big(b\zeta_j^{\mathbf{k}}+(Bb\zeta)_j^{\mathbf{k}}\big)+f_j^{\mathbf{k}}(c\xi)\Big).
\end{array}
\end{equation*}
It is noted that the functions here are actually the $4N$-th
iterates of the iteration in Sect.   \ref{subsec:rev pic}. By
expressing $f_j^{\pm\langle j\rangle}$ and $f_j^{\mathbf{k}}$ in
terms of $F,G$ and inserting them from \eqref{Abstruct Pic ite} into
this defect, it is arrived at that
\begin{equation*}
\begin{array}{ll}
\omega_jd_j^{\pm\langle j\rangle}= 2 \omega_j \alpha_j^{\pm\langle
j\rangle}\big( \big[a\dot{\zeta}_j^{\pm\langle
j\rangle}\big]^{(4N)}-\big[a\dot{\zeta}_j^{\pm\langle
j\rangle}\big]^{(4N+1)}\big), \ \ \ & \alpha_j^{\pm\langle
j\rangle}=\pm  \textmd{i}\epsilon^2
\frac{\textmd{sinc}(h\omega_j/2)}{2\phi_2(h^2\omega^2_j)},\\
\boldsymbol{\omega}^{|\mathbf{k}|}d_j^{\mathbf{k}}=
\beta_j^{\mathbf{k}}\big([b\zeta_j^{\mathbf{k}}]^{(4N)}-
[b\zeta_j^{\mathbf{k}} ]^{(4N+1)}\big), \ \ \ &
\beta_j^{\mathbf{k}}=\epsilon^{[[\mathbf{k}]]}\frac{2s_{\langle
j\rangle+\mathbf{k}}s_{\langle
j\rangle-\mathbf{k}}}{h^2c_{\mathbf{k}}\phi_2(h^2\omega^2_j)}.
\end{array}
\end{equation*}
Looking closer at these expressions, we  consider new variables
given as
\begin{equation*}
\begin{array}{ll}
 \tilde{a}\zeta_j^{\pm\langle j\rangle}=\alpha_j^{\pm\langle
j\rangle} a \zeta_j^{\pm\langle j\rangle},\ \
\tilde{b}\zeta_j^{\mathbf{k}}=\beta_j^{\mathbf{k}} b
\zeta_j^{\mathbf{k}}
\end{array}
\end{equation*}
and rewrite the iteration \eqref{Abstruct Pic ite} in these
variables as
\begin{equation*}
\begin{aligned} &\tilde{a}\dot{\zeta}^{(n+1)}=\Omega^{-1}\tilde{F}(\tilde{a}\zeta^{(n)},\tilde{b}\zeta^{(n)})-A\tilde{a}\zeta^{(n)}, \\
 &\tilde{b}\zeta^{(n+1)}=
 \tilde{G}(\tilde{a}\zeta^{(n)},\tilde{b}\zeta^{(n)})-B\tilde{b}\zeta^{(n)}.
\end{aligned}
\end{equation*}
Here the transformed functions are defined by
\begin{equation*}
\begin{array}{ll}
 \tilde{F}_j^{\pm\langle j\rangle}(\tilde{a}\zeta,\tilde{b}\zeta)=\alpha_j^{\pm\langle
j\rangle} F_j^{\pm\langle
j\rangle}(\alpha^{-1}\tilde{a}\zeta,\beta^{-1}\tilde{b}\zeta)=-\epsilon
f_j^{\pm\langle
j\rangle}(\alpha^{-1}\tilde{a}\zeta+\beta^{-1}\tilde{b}\zeta),\\
\tilde{G}_j^{\mathbf{k}}(\tilde{a}\zeta,\tilde{b}\zeta)=\beta_j^{\mathbf{k}}(\Psi
\Omega^{-1}
G)_j^{\mathbf{k}}(\alpha^{-1}\tilde{a}\zeta,\beta^{-1}\tilde{b}\zeta)=-\epsilon^{[[\mathbf{k}]]}
f_j^{\mathbf{k}}(\alpha^{-1}\tilde{a}\zeta+\beta^{-1}\tilde{b}\zeta).
\end{array}
\end{equation*}
In the iteration for the initial values, one has
\begin{equation*}
\begin{aligned}
  &\tilde{a} \zeta^{(n+1)}(0)=\alpha v
  +\tilde{P}\tilde{b}\zeta^{(n)}(0)+\tilde{Q}\tilde{b}\zeta^{(n)}(0),
\end{aligned}
\end{equation*}
where $\tilde{P} = \alpha P\beta^{-1}, \tilde{Q} = \alpha
Q\beta^{-1}$.  For the bound of  $\tilde{P}$, we have
\begin{equation*}
\begin{aligned}
&|||\tilde{P}\tilde{b}\zeta(0) |||^2_{s}\\
=&\sum\limits_{\mathbf{k}\in \mathcal{K}} \sum\limits_{|j|\leq
M}''\omega_j^{2s}\abs{\textmd{i}\epsilon^2
\frac{\textmd{sinc}(h\omega_j/2)}{2\phi_2(h^2\omega^2_j)}
\frac{1}{2}\frac{\omega_j}{\epsilon}\sum\limits_{\mathbf{k}\neq
\pm\langle
j\rangle}\frac{h^2c_{\mathbf{k}}\phi_2(h^2\omega^2_j)}{\epsilon^{[[\mathbf{k}]]}2s_{\langle
j\rangle+\mathbf{k}}s_{\langle
j\rangle-\mathbf{k}}}\frac{\epsilon^{[[\mathbf{k}]]}}{\boldsymbol{\omega}^{|\mathbf{k}|}}\tilde{b}\zeta_j^{\mathbf{k}}(0)
}^2\\
\leq& \frac{\epsilon^2h^4}{64}\sum\limits_{\mathbf{k}\in
\mathcal{K}} \sum\limits_{|j|\leq M}''\omega_j^{2s}
\Big(\sum\limits_{\mathbf{k}\neq \pm\langle
j\rangle}\frac{\omega_j}{\abs{s_{\langle
j\rangle+\mathbf{k}}s_{\langle
j\rangle-\mathbf{k}}}}\boldsymbol{\omega}^{-|\mathbf{k}|}\tilde{b}\zeta_j^{\mathbf{k}}(0)\Big)^2
\\
\leq& \frac{\epsilon^2h^4}{64}\sum\limits_{\mathbf{k}\in
\mathcal{K}} \sum\limits_{|j|\leq M}''\omega_j^{2s}
\Big(\sum\limits_{\mathbf{k}\neq \pm\langle
j\rangle}\frac{1}{\epsilon^{1/2}h^2}\boldsymbol{\omega}^{-|\mathbf{k}|}\tilde{b}\zeta_j^{\mathbf{k}}(0)\Big)^2\\
\leq& \frac{\epsilon }{64}\sum\limits_{\mathbf{k}\in \mathcal{K}}
\sum\limits_{|j|\leq M}''\omega_j^{2s}
\Big(\sum\limits_{\mathbf{k}\neq \pm\langle j\rangle}
\boldsymbol{\omega}^{-2|\mathbf{k}|}\sum\limits_{\mathbf{k}\neq
\pm\langle j\rangle} (\tilde{b}\zeta_j^{\mathbf{k}}(0))^2\Big)\leq C
\epsilon ||| \tilde{b}\zeta(0) |||^2_{s}.
\end{aligned}
\end{equation*}
Similarly, it is obtained that
\begin{equation*}
\begin{aligned}
&|||\tilde{Q}\tilde{b}\zeta(0) |||^2_{s} \leq C \epsilon |||
\tilde{b}\zeta(0) |||^2_{s}.
\end{aligned}
\end{equation*}

It can be verified that in an $H^s$-neighbourhood of $0$ where the
bounds \eqref{Bounds func zeta} hold, the partial derivatives of
$\tilde{F}$ with respect to $\tilde{a}\zeta$ and $\tilde{b}\zeta$
 are
bounded by $\mathcal{O}(\epsilon^{1/2})$. Meanwhile,  the partial
derivative   of $\tilde{G}$ with respect to $\tilde{b}\zeta$  is
bounded by $\mathcal{O}(\epsilon^{1/2})$ but that of $\tilde{G}$
with respect to $\tilde{a}\zeta$ is only $\mathcal{O}(1)$. It is
noted that these results are the same as those described in Sect.
6.9 of \cite{Cohen08-1}. Likewise, we obtain
\begin{equation*}
\begin{aligned}
&||| \Omega (\tilde{a}\dot{\zeta} ^{(4N+1)}-\tilde{a}\dot{\zeta}
^{(4N)})|||_{s}\leq C\epsilon^{N+2},\\
&|||  \tilde{b} \zeta ^{(4N+1)}-\tilde{b} \zeta
^{(4N)})|||_{s}\leq C\epsilon^{N+2},\\
&|||  \tilde{a} \zeta (0)^{(4N+1)}-\tilde{a} \zeta(0)
^{(4N)})|||_{s}\leq C\epsilon^{N+2}.
\end{aligned}
\end{equation*}
Thus,   it is yielded for $\tau\leq1 $ and $(j,
\mathbf{k})\in\mathcal{R}_{\epsilon,h}$ that
\begin{equation}\label{right hand esti non}
\begin{aligned}
&\Big( \sum\limits_{\norm{\mathbf{k}}\leq
K}\norm{\boldsymbol{\omega}^{|\mathbf{k}|}\mathbf{d}^{\mathbf{k}}(\tau)}_{s}^2\Big)^{1/2}\leq
C\epsilon^{N+1}.
\end{aligned}
\end{equation}

With \eqref{right hand esti non},  the defect   \eqref{defects} has
the bound $\norm{\delta(t)}_{s}\leq C\epsilon^{N+1}$   for $t\leq
\epsilon^{-1}.$ For  the defect in the initial conditions
\eqref{initial pl} and \eqref{initial mi}, it holds that $$
\norm{\mathbf{q}(0)-\tilde{\mathbf{q}}(0)}_{s+1}+\norm{\mathbf{p}(0)-\tilde{\mathbf{p}}(0)}_{s}\leq
C\epsilon^{N+1}.$$ With the alternative scaling \eqref{diff resca},
the following result is derived
\begin{equation}\label{alter esti non}
\begin{aligned}
&\Big( \sum\limits_{\norm{\mathbf{k}}\leq
K}\norm{\boldsymbol{\omega}^{s|\mathbf{k}|}\mathbf{d}^{\mathbf{k}}(\tau)}_{1}^2\Big)^{1/2}\leq
C\epsilon^{N+1}.
\end{aligned}
\end{equation}
\hfill\end{proof}

\subsection{Remainders}\label{subsec:rem}
\begin{prop}\label{pro: remain}For the difference of
the numerical solution and its modulated Fourier expansion, we have
\begin{equation}
\norm{\mathbf{q}_{n}-\tilde{\mathbf{q}}(t)}_{s+1}+\norm{\mathbf{p}_{n}-\tilde{\mathbf{p}}(t)}_{s}\leq
C\epsilon^N\quad for \quad 0\leq t=nh\leq \epsilon^{-1}.
\label{error MFE}%
\end{equation}
\end{prop}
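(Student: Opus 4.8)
The plan is to treat the truncated modulated Fourier expansion $(\tilde{\mathbf{q}},\tilde{\mathbf{p}})$ as an approximate solution of the AAVF recursion and to control the propagation of its defect by a discrete stability argument. By the construction in Sect.~\ref{subsec:def}, $\tilde{\mathbf{q}}$ satisfies the two-step relation \eqref{MFE-2} up to the defect \eqref{defects}, for which we have already established $\norm{\delta(t)}_{s}\leq C\epsilon^{N+1}$ for $t\leq\epsilon^{-1}$ together with the matching of initial data $\norm{\mathbf{q}(0)-\tilde{\mathbf{q}}(0)}_{s+1}+\norm{\mathbf{p}(0)-\tilde{\mathbf{p}}(0)}_{s}\leq C\epsilon^{N+1}$. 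First I would introduce the position error $\mathbf{e}_n=\mathbf{q}_n-\tilde{\mathbf{q}}(nh)$ and the corresponding momentum error $\hat{\mathbf{e}}_n=\mathbf{p}_n-\tilde{\mathbf{p}}(nh)$, and subtract the defect relation for $\tilde{\mathbf{q}}$ from the genuine recursion \eqref{AAVFmethod}. This yields a linear two-step difference equation for $\mathbf{e}_n$ whose right-hand side is $h^2\phi_2(V)$ times the difference $\Delta f_n$ of the two averaged nonlinearities, plus the inhomogeneity $h^2\phi_2(V)\delta_n$.

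Next I would rewrite this second-order recursion as a one-step system for the pair $(\mathbf{e}_n,\hat{\mathbf{e}}_n)$, using the same $\mathbf{q}$--$\mathbf{p}$ coupling \eqref{qp connec} that defines the method; the numerical non-resonance condition \eqref{further-non-res cond} guarantees that this reformulation, which reconstructs the velocity error through $\Omega^{-1}\tan(\tfrac{1}{2}h\Omega)$, is well defined. The homogeneous part of the system is the linear AAVF propagator assembled from $\cos(h\Omega)$, $\sin(h\Omega)(h\Omega)^{-1}$ and $\phi_2(V)$; since these are uniformly bounded functions of $h\Omega$ and the propagator is an isometry for the discrete harmonic energy, its powers are bounded uniformly in $n$, $h$ and $M$ in the $H^{s+1}\times H^{s}$ norm. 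The discrete variation-of-constants formula then gives, in that norm,
\begin{equation*}
\norm{(\mathbf{e}_n,\hat{\mathbf{e}}_n)}\leq \norm{(\mathbf{e}_0,\hat{\mathbf{e}}_0)}+h\sum\limits_{k=0}^{n-1}\big(\norm{\Delta f_k}_s+\norm{\delta_k}_s\big).
\end{equation*}

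The crucial input is the estimate of the nonlinear increment $\Delta f_k$. Because $g(0)=g'(0)=0$, the map $f$ has derivative of size $\mathcal{O}(\epsilon)$ on the ball of radius $\mathcal{O}(\epsilon)$ that contains $\tilde{\mathbf{q}}$ by \eqref{bound TMFE}, so on that ball $f$ is Lipschitz with constant $\mathcal{O}(\epsilon)$ in the weighted norms; here one invokes the algebra-type product bounds of Lemma \ref{lem 08} to control the nonlinearity in $H^{s}$. Hence $\norm{\Delta f_k}_s\leq C\epsilon\,\norm{\mathbf{e}_k}_{s+1}$, and I would close the estimate with a discrete Gronwall inequality, obtaining $\norm{(\mathbf{e}_n,\hat{\mathbf{e}}_n)}\leq\big(C\epsilon^{N+1}+C\,nh\,\epsilon^{N+1}\big)\mathrm{e}^{C\epsilon nh}$. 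Since $t=nh\leq\epsilon^{-1}$, the exponential factor is $\mathcal{O}(1)$ and $nh\,\epsilon^{N+1}\leq\epsilon^{N}$, which yields the claimed bound \eqref{error MFE}.

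The main obstacle I anticipate is keeping $\mathbf{q}_n$ inside the $\mathcal{O}(\epsilon)$-neighbourhood where the Lipschitz constant is $\mathcal{O}(\epsilon)$: this must be secured by a bootstrap, i.e.\ an induction on $n$ showing that as long as $\norm{\mathbf{e}_k}\leq C\epsilon^{N}$ the numerical solution remains in that ball, which is self-consistent because $\epsilon^{N}\ll\epsilon$. A secondary technical point is verifying the uniform-in-$M$ boundedness of the propagator and of the product estimates in the $H^{s+1}\times H^{s}$ norm, together with the correct handling of the $\Omega^{-1}\tan(\tfrac{1}{2}h\Omega)$ factors relating the position and momentum errors so that no spurious powers of $\epsilon^{-1/2}$ enter the Gronwall constant.
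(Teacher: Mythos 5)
Your proposal is correct and follows essentially the same route as the paper: the paper likewise forms the errors $\Delta\mathbf{q}_n$, $\Omega^{-1}\Delta\mathbf{p}_n$, propagates them through the uniformly bounded (mode-wise orthogonal) linear propagator with blocks $\cos(h\Omega)$, $\pm\sin(h\Omega)$, inserts the defect bound $\norm{\delta(t)}_{s}\leq C\epsilon^{N+1}$ and the $\mathcal{O}(\epsilon)$-Lipschitz bound $\norm{\Delta f}_{s}\leq \epsilon(\norm{\Delta\mathbf{q}_n}_{s}+\norm{\Delta\mathbf{p}_n}_{s-1})$ (quoted from Sect.~4.2 of \cite{Hairer08} and Sect.~6.10 of \cite{Cohen08-1}), and concludes $\norm{\Delta\mathbf{q}_n}_{s+1}+\norm{\Omega^{-1}\Delta\mathbf{p}_n}_{s+1}\leq C(1+t_n)\epsilon^{N+1}$ by exactly the discrete Gronwall accumulation you describe, with $t_n\leq\epsilon^{-1}$ absorbing the exponential. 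The one divergence is that the paper subtracts the modulated expansion from the one-step scheme \eqref{AAVFmethod} directly, so the momentum error propagates with bounded coefficients and the reconstruction of the velocity error by inverting $\Omega^{-1}\tan(\frac{1}{2}h\Omega)$ in \eqref{qp connec} --- and with it the spurious $\epsilon^{-1/2}$ factors from \eqref{further-non-res cond} that you rightly worry about --- never arises; you would do well to adopt that shortcut rather than go through the two-step relation.
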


\begin{proof}
With the notations $\Delta
\mathbf{q}_{n}=\tilde{\mathbf{q}}(t_n)-\mathbf{q}_{n},\ \Delta
\mathbf{p}_{n}=\tilde{\mathbf{p}}(t_n)-\mathbf{p}_{n}$, one gets
\begin{equation*}
\begin{aligned}
\left(
  \begin{array}{c}
    \Delta \mathbf{q}_{n+1} \\
    \Omega^{-1}\Delta \mathbf{p}_{n+1} \\
  \end{array}
\right)=\left(
          \begin{array}{cc}
            \cos(h\Omega) & \sin(h\Omega) \\
            -\sin(h\Omega) & \cos(h\Omega) \\
          \end{array}
        \right)\left(
  \begin{array}{c}
    \Delta \mathbf{q}_{n} \\
    \Omega^{-1}\Delta \mathbf{p}_{n} \\
  \end{array}
\right)+h\left(
          \begin{array}{c}
           h\Omega\phi_2(V)\Omega^{-1}( \Delta f +\delta)\\
           \phi_1(V) \Omega^{-1} (\Delta f + \delta) \\
          \end{array}
        \right),
\end{aligned}
\end{equation*}
where $$\Delta f=\int_{0}^{1}\big(f((1-\sigma)\mathbf{q}_{n}+\sigma
\mathbf{q}_{n+1})-f((1-\sigma)\tilde{\mathbf{q}}(t_n)+\sigma
\tilde{\mathbf{q}}(t_n+h))\big)d\sigma .$$ Using the Lipschitz bound
given in Sect. 4.2 of \cite{Hairer08} and Sect. 6.10 of
\cite{Cohen08-1}, we have
\begin{equation*}
\begin{aligned}
\norm{\Omega^{-1}\Delta f}_{s+1}=\norm{\Delta f}_{s}\leq \epsilon
(\norm{\Delta \mathbf{q}_{n}}_{s}+\norm{\Delta
\mathbf{p}_{n}}_{s-1}).
\end{aligned}
\end{equation*}
Moreover, it is clear that
$\norm{\Omega^{-1}\delta(t)}_{s+1}=\norm{\delta(t)}_{s}\leq
C\epsilon^{N+1}$. Therefore, we obtain
\begin{equation*}
\begin{aligned}
\norm{\left(
  \begin{array}{c}
    \Delta \mathbf{q}_{n+1} \\
    \Omega^{-1}\Delta \mathbf{p}_{n+1} \\
  \end{array}
\right) }_{s+1}\leq\norm{ \left(
  \begin{array}{c}
    \Delta \mathbf{q}_{n} \\
    \Omega^{-1}\Delta \mathbf{p}_{n} \\
  \end{array}
\right)}_{s+1}+h\Big(C\epsilon \norm{ \Delta
\mathbf{q}_{n}}_{s+1}+C\epsilon \norm{\Delta
\mathbf{p}_{n}}_{s+1}+C\epsilon^{N+1}\Big),
\end{aligned}
\end{equation*}
which implies $ \norm{ \Delta
\mathbf{q}_{n}}_{s+1}+\norm{\Omega^{-1} \Delta
\mathbf{p}_{n}}_{s+1}\leq C(1+t_n)\epsilon^{N+1} $ for
$t_n\leq\epsilon^{-1}.$ This proves \eqref{error MFE}.
\hfill\end{proof}

\subsection{Almost invariants}\label{subsec:alm inv}
\label{subsec:Conservation} According to the above analysis, we can
rewrite the defect formula \eqref{djk}  as
\begin{equation}\label{djk pote}
\begin{aligned}
&\frac{1}{h^2\phi_2(h^2\omega^2_j)}\tilde{L}_j^{\mathbf{k}}
\zeta_j^{\mathbf{k}}+\nabla_{-j}^{-\mathbf{k}}\mathcal{U}(\xi(t))=d_j^{\mathbf{k}},
\end{aligned}
\end{equation}
where $\nabla_{-j}^{-\mathbf{k}}\mathcal{U}(y)$ is the partial
derivative with respect to $y_{-j}^{-\mathbf{k}}$  of the extended
potential (see, e.g. \cite{Cohen08-1,Hairer08})
\begin{equation*}
\begin{aligned}
&\mathcal{U}(\xi(t,\sigma))=\sum\limits_{l=-N}^N\mathcal{U}_l(\xi(t,\sigma)),\\
&\mathcal{U}_l(\xi(t,\sigma))=\sum\limits_{m= 2}^N
\frac{U^{(m+1)}(0)}{(m+1)!}
\sum\limits_{\mathbf{k}^1+\cdots+\mathbf{k}^{m+1}=0}\sum\limits_{j_1+\cdots+j_{m+1}=2Ml}'
\int_{0}^{1}\big(\xi_{j_1}^{\mathbf{k}^1}\cdot\ldots\cdot
\xi_{j_{m+1}}^{\mathbf{k}^{m+1}}\big)(t,\sigma)d\sigma.
\end{aligned}
\end{equation*}

Following \cite{Cohen08-1},   define
$S_{\boldsymbol{\mu}}(\theta)y=\big(\mathrm{e}^{\mathrm{i}(\mathbf{k}
\cdot \boldsymbol{\mu}) \theta}y_j^{\mathbf{k}}\big)_{| j| \leq M,
\norm{\mathbf{k}}\leq K}$ and
$T(\theta)y=\big(\mathrm{e}^{\mathrm{i}j
\theta}y_j^{\mathbf{k}}\big)_{| j| \leq M, \norm{\mathbf{k}}\leq
K},$ where $\boldsymbol{\mu} = (\mu_l)_{l\geq0}$ is an arbitrary
real sequence for $\theta\in R.$ According to the results given in
\cite{Cohen08-1}, we obtain $
\mathcal{U}(S_{\boldsymbol{\mu}}(\theta)y)=\mathcal{U}(y)$ and
$\mathcal{U}_0(T(\theta)y)=\mathcal{U}_0(y)$ for $\theta\in
\mathbb{R}$. Therefore,
\begin{equation}\label{du}
\begin{aligned}
&0=\frac{d}{d\theta}\mid_{\theta=0}
\mathcal{U}(S_{\boldsymbol{\mu}}(\theta)\xi(t,\sigma)),\ \
0=\frac{d}{d\theta}\mid_{\theta=0}
\mathcal{U}_0(T(\theta)\xi(t,\sigma)).
\end{aligned}
\end{equation}

\begin{prop}
There exit two functions
$\mathcal{J}_{l}[\boldsymbol{\zeta},\boldsymbol{\eta}](\tau)$ and
$\mathcal{K}[\boldsymbol{\zeta},\boldsymbol{\eta}](\tau)$ such that
\begin{equation}\label{invariant1}
\begin{aligned}
&\sum\limits_{l=1}^M \omega_l^{2s+1} \left|\frac{d}{d \tau}
\mathcal{J}_{l}[\boldsymbol{\zeta},\boldsymbol{\eta}](\tau)\right|\leq C\epsilon^{N+1},\\
&\left|\frac{d}{d \tau}
\mathcal{K}[\boldsymbol{\zeta},\boldsymbol{\eta}](\tau)\right|\leq
C(\epsilon^{N+1}+\epsilon^{2}M^{-s+1}),
\end{aligned}
\end{equation}
where  $\tau\leq 1.$   Moreover, it is true that
\begin{equation}\label{invariant2}
\begin{aligned}
&  \mathcal{J}_{l}[\boldsymbol{\zeta},\boldsymbol{\eta}](\epsilon
t_n)=\hat{J}_{l}(\mathbf{q}_{n} ,
\mathbf{p}_{n})+\gamma_l(t_n)\epsilon^3,\\
&\mathcal{K}[\boldsymbol{\zeta},\boldsymbol{\eta}](\epsilon
t_n)=\hat{K}(\mathbf{q}_{n} ,
\mathbf{p}_{n})+\mathcal{O}(\epsilon^3)+\mathcal{O}(\epsilon^2
M^{-s}),
\end{aligned}
\end{equation}
where $$\hat{J}_{l} = \hat{I}_l + \hat{I}_{-l} = 2\hat{I}_l\quad
\textmd{for} \quad 0 < l < M,\quad \hat{J}_0 = \hat{I}_0,\quad
\hat{J}_M = \hat{I}_M.$$ Here all the constants are independent of
$\epsilon, M, h$, and $n$, and $\sum\limits_{l=0}^M \omega_l^{2s+1}
\gamma_l(t_n)\leq C$ for $t_n\leq \epsilon^{-1}.$
\end{prop}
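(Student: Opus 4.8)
The plan is to extract the two almost-invariants from the invariance of the extended potential $\mathcal{U}$ under the one-parameter group actions $S_{\boldsymbol{\mu}}(\theta)$ and $T(\theta)$, used together with the modulation equation in its potential form \eqref{djk pote}. The governing principle is that when \eqref{djk pote} is tested against the infinitesimal generator of a symmetry of $\mathcal{U}$, the nonlinear (potential-gradient) contribution cancels by \eqref{du}, while the linear operator $\tilde{L}$ contribution reorganizes into a total derivative in the slow time $\tau=\epsilon t$. Concretely, for the actions I would take $\boldsymbol{\mu}=\langle l\rangle$, so that $\mathbf{k}\cdot\boldsymbol{\mu}=k_l$ and all of $\mathcal{U}$ is invariant; for the momentum I would use the generator of $T(\theta)$, which carries the factor $j$ and leaves only $\mathcal{U}_0$ invariant.

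First I would multiply \eqref{djk pote} by $(\mathbf{k}\cdot\boldsymbol{\mu})\,\overline{\zeta_j^{\mathbf{k}}}$ (respectively by $j\,\overline{\zeta_j^{\mathbf{k}}}$ for the momentum), sum over $|j|\le M$ and $\norm{\mathbf{k}}\le K$, and add the complex conjugate to obtain a real expression. Differentiating the first identity in \eqref{du} at $\theta=0$ gives $\sum_{j,\mathbf{k}}(\mathbf{k}\cdot\boldsymbol{\mu})\,\zeta_j^{\mathbf{k}}\,\nabla_{-j}^{-\mathbf{k}}\mathcal{U}=0$, so the potential term drops out in the action case; the second identity does the same for the $\mathcal{U}_0$ part in the momentum case, while the remaining pieces $\mathcal{U}_l$ with $l\neq0$, whose inner sums run over $j_1+\cdots+j_{m+1}=2Ml$, carry a factor $2Ml$ after differentiation and are controlled by the aliasing decay, producing precisely the $\epsilon^2 M^{-s+1}$ term. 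The decisive step is to recognize the surviving $\tilde{L}$ contribution as $\frac{d}{d\tau}\mathcal{J}_l$ (respectively $\frac{d}{d\tau}\mathcal{K}$): by Proposition \ref{lhd pro} the dominant term of $\tilde{L}^{\pm\langle j\rangle}$ is $\pm2\mathrm{i}\epsilon h\, s_{\langle j\rangle}\dot{\zeta}_j^{\pm\langle j\rangle}$, and the symmetry of the AAVF method (Proposition \ref{prop h}) forces the combination of the $\tilde{L}$-term with its conjugate to be a perfect $\tau$-derivative, the non-derivative pieces cancelling. This defines $\mathcal{J}_l$ and $\mathcal{K}$.

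With these identities in hand, \eqref{invariant1} follows by bounding the defect terms on the right. After Cauchy--Schwarz the right side is a pairing of $(\mathbf{k}\cdot\boldsymbol{\mu})\,\zeta_j^{\mathbf{k}}$ against $d_j^{\mathbf{k}}$, which I would control by the defect bounds of Proposition \ref{pro: bound of d}, namely \eqref{right hand esti non} and \eqref{alter esti non}, together with the coefficient bounds of Propositions \ref{pro: functions} and \ref{pro: bound of the e}. For the weighted action estimate I would work in the alternative scaling \eqref{diff resca}, whose $\boldsymbol{\omega}^{s|\mathbf{k}|}$ weights furnish exactly the factor $\omega_l^{2s+1}$ in \eqref{invariant1}; here the third estimate of Lemma \ref{lem 08} converts the weight $\sum_l|k_l|\omega_l^{2s+1}$ into $\boldsymbol{\omega}^{2s|\mathbf{k}|}(1+|\mathbf{k}\cdot\boldsymbol{\omega}|)$ and guarantees convergence of the sums over $\mathbf{k}$. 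This yields the bound $C\epsilon^{N+1}$ for the actions and $C(\epsilon^{N+1}+\epsilon^2M^{-s+1})$ for the momentum.

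Finally, to establish \eqref{invariant2} I would identify the dominant terms of $\mathcal{J}_l$ and $\mathcal{K}$: only $\mathbf{k}=\pm\langle l\rangle$ contribute at leading order, and these reduce to quantities of the form $|\zeta_l^{\pm\langle l\rangle}|^2$. Inserting \eqref{jth TMFE}, which expresses $\tilde q_j$ through $\zeta_j^{\pm\langle j\rangle}$ up to $\mathcal{O}(\epsilon^2)$, together with the relation \eqref{eta modula sys} between $\eta^{\mathbf{k}}$ and $\zeta^{\mathbf{k}}$ and the remainder bound \eqref{error MFE}, matches these dominant terms to $\hat{J}_l(\mathbf{q}_n,\mathbf{p}_n)$ and $\hat{K}(\mathbf{q}_n,\mathbf{p}_n)$; the leftover contributions are $\mathcal{O}(\epsilon^3)$ (plus $\mathcal{O}(\epsilon^2M^{-s})$ for the momentum), with $\sum_l\omega_l^{2s+1}\gamma_l$ bounded by the same estimates. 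The modified prefactor $\cos(\frac{1}{2} h\omega_l)/\mathrm{sinc}(\frac{1}{2} h\omega_l)$ is not inserted by hand: it emerges from $s_{\langle l\rangle}=\sin(\frac{1}{2} h\omega_l)$ in the leading $\tilde{L}^{\pm\langle l\rangle}$-term and from the $\tan(\frac{1}{2} h\omega_j)$ relating the $\mathbf{q}$- and $\mathbf{p}$-increments in \eqref{qp connec}. I expect the main obstacle to be exactly this total-derivative step: verifying, through careful bookkeeping of the $\sigma$-integral (handled via $\xi^{\mathbf{k}}=L_4^{\mathbf{k}}(\sigma)\zeta^{\mathbf{k}}$) and of the higher-order Taylor terms of $\tilde{L}$, that the implicit AAVF method still produces an exact $\tau$-derivative carrying the correct modified prefactor.
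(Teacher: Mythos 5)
Your proposal is correct and takes essentially the same route as the paper: invariance of the extended potential under $S_{\boldsymbol{\mu}}(\theta)$ and $T(\theta)$, cancellation via \eqref{du}, recognition of the $\tilde{L}$-contribution as a total $\tau$-derivative (where the paper invokes the ``magic formulas'' of Hairer--Lubich--Wanner, p.~508, you invoke the symmetry of the AAVF method---the same mechanism), the defect bound \eqref{alter esti non} with the alternative scaling and Lemma \ref{lem 08} for \eqref{invariant1}, the aliasing decay of the $2Ml$-th Fourier coefficient for the $\epsilon^2 M^{-s+1}$ term, and identification of the dominant $\mathbf{k}=\pm\langle l\rangle$ contributions via \eqref{jth TMFE} and \eqref{error MFE} for \eqref{invariant2}. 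The one imprecision is that the test multiplier must be $\mathrm{i}(\mathbf{k}\cdot\boldsymbol{\mu})\,\xi_{-j}^{-\mathbf{k}}=\mathrm{i}(\mathbf{k}\cdot\boldsymbol{\mu})\,L_4^{-\mathbf{k}}(\sigma)\zeta_{-j}^{-\mathbf{k}}$ (the paper then fixes $\sigma=1/2$, which is also where the $\cos(\frac{1}{2}h\omega_j)$ in the modified prefactor actually originates) rather than the bare $(\mathbf{k}\cdot\boldsymbol{\mu})\,\overline{\zeta_j^{\mathbf{k}}}$, since \eqref{du} is an identity in $\xi$, not $\zeta$---a point you in effect acknowledge in your closing remark on the $L_4^{\mathbf{k}}(\sigma)$ bookkeeping.
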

\begin{proof}
 $\bullet$ \textbf{Proof of \eqref{invariant1}.}
From the first formula of \eqref{du}, it follows that
\begin{equation*}
\begin{aligned}
&0=\frac{d}{d\theta}\mid_{\theta=0}
\mathcal{U}(S_{\boldsymbol{\mu}}(\theta)\xi(t,\sigma))
=\sum\limits_{\norm{\mathbf{k}}\leq K}\sum\limits_{|j|\leq M}'
\mathrm{i}(\mathbf{k} \cdot \boldsymbol{\mu})
\xi_{-j}^{-\mathbf{k}}(t,\sigma)
\nabla_{-j}^{-\mathbf{k}}\mathcal{U}(\xi(t,\sigma))\\
=&\sum\limits_{\norm{\mathbf{k}}\leq K}\sum\limits_{|j|\leq M}'
\mathrm{i}(\mathbf{k} \cdot \boldsymbol{\mu})
L^{-\mathbf{k}}_4(\sigma)\zeta^{-\mathbf{k}}_{-j}
\Big(\frac{1}{h^2\phi_2(h^2\omega^2_j)}\tilde{L}_j^{\mathbf{k}}
\zeta_j^{\mathbf{k}}-d_j^{\mathbf{k}}\Big).
\end{aligned}
\end{equation*}
Since the right-hand side is independent of $\sigma$, we choose
$\sigma=1/2$ in the following analysis. With  the above formula, we
have
\begin{equation}
\begin{aligned}
&  \sum\limits_{\norm{\mathbf{k}}\leq K}\sum\limits_{|j|\leq M}'
\mathrm{i}(\mathbf{k} \cdot \boldsymbol{\mu}) L^{-\mathbf{k}}_4\big(
\frac{1}{2}\big)\zeta^{-\mathbf{k}}_{-j}
 \frac{1}{h^2\phi_2(h^2\omega^2_j)}\tilde{L}_j^{\mathbf{k}}
\zeta_j^{\mathbf{k}} =  \sum\limits_{\norm{\mathbf{k}}\leq
K}\sum\limits_{|j|\leq M}' \mathrm{i}(\mathbf{k} \cdot
\boldsymbol{\mu})L^{-\mathbf{k}}_4\big(
\frac{1}{2}\big)\zeta^{-\mathbf{k}}_{-j} d_j^{\mathbf{k}} .
\end{aligned}
\label{almost 1-2}%
\end{equation}
By the expansions   of $L^{-\mathbf{k}}_4( \frac{1}{2})$ and
$\tilde{L}_j^{\mathbf{k}}$ and the ``magic formulas" on p. 508 of
\cite{hairer2006}, it is known that the left-hand side of
\eqref{almost 1-2} is a total derivative of function $\epsilon
\mathcal{J}_{\boldsymbol{\mu}}[\boldsymbol{\zeta},\boldsymbol{\eta}](\tau)$
which depends on $\boldsymbol{\zeta}(\tau), \boldsymbol{\eta}(\tau)$
and their up to $(N-1)$th order derivatives. This means that
\eqref{almost 1-2} is identical to
\begin{equation*}
\begin{aligned}
& -\epsilon\frac{d}{d \tau}
\mathcal{J}_{\boldsymbol{\mu}}[\boldsymbol{\zeta},\boldsymbol{\eta}](\tau)
=
 \sum\limits_{\norm{\mathbf{k}}\leq K}\sum\limits_{|j|\leq M}' \mathrm{i}(\mathbf{k}
\cdot \boldsymbol{\mu}) L^{-\mathbf{k}}_4\big(
\frac{1}{2}\big)\zeta^{-\mathbf{k}}_{-j} d_j^{\mathbf{k}} .
\end{aligned}
\end{equation*}
Consider the special case of $\boldsymbol{\mu}=\langle l\rangle$ in
what follows. By letting
$z_j^{\mathbf{k}}=L_4^{\mathbf{k}}(1/2)\zeta^{\mathbf{k}}_{j}$ and
the property of $L_4^{\mathbf{k}}(1/2)$, it is easy to know that the
bounds of $z_j^{\mathbf{k}}$ and $\zeta_j^{\mathbf{k}}$ are of the
same magnitude. Splitting $\mathbf{d} = a\mathbf{d}+ b\mathbf{d}$
into two parts: the diagonal ($\mathbf{k}=\pm\langle j\rangle$) and
nondiagonal ($\mathbf{k}\neq\pm\langle j\rangle$), it is clear that
\begin{equation*}
\begin{aligned}
|||a\mathbf{d} |||_s^2+\sum\limits_{\norm{\mathbf{k}}\leq
K}|||\boldsymbol{\omega}^{s\abs{\mathbf{k}}} b\mathbf{d}
|||_0^2=\sum\limits_{\norm{\mathbf{k}}\leq
K}\norm{\boldsymbol{\omega}^{s\abs{\mathbf{k}}}\mathbf{d}^{\mathbf{k}}}_0^2\leq
C \epsilon^{2N+2},
\end{aligned}
\end{equation*}
where \eqref{alter esti non} is used. According to Lemma 3 of
\cite{Cohen08} and the facts that
$z_j^{\mathbf{k}}=\frac{\epsilon}{\omega_j^s}\hat{a}z_j^{\mathbf{k}}+\frac{\epsilon^{[[\mathbf{k}]]}}{\boldsymbol{\omega}
^{s\abs{\mathbf{k}}}}\hat{a}z_j^{\mathbf{k}}$ and
$|||\hat{a}z^{\mathbf{k}} |||_1\leq C,\
|||\Omega\hat{b}z^{\mathbf{k}} |||_1\leq C$ from \eqref{Bounds zeta
sca}, one arrives at
\begin{equation*}
\begin{aligned}
&\sum\limits_{l=1}^M \omega_l^{2s+1} \left|\frac{d}{d \tau}
\mathcal{J}_{l}[\boldsymbol{\zeta},\boldsymbol{\eta}](\tau)\right|=\frac{1}{\epsilon}\sum\limits_{l=1}^M
\omega_l^{2s+1}   \abs{\sum\limits_{\norm{\mathbf{k}}\leq
K}k_l\sum\limits_{j=-\infty}^{\infty}\zeta_j^{\mathbf{k}}d_j^{\mathbf{k}}}\\
\leq&
\frac{1}{\epsilon}\Big[|||\frac{\epsilon}{\omega_j^s}\hat{a}\zeta_j^{\mathbf{k}}|||_{s+1}|||a\mathbf{d}|||_{s}
+\Big(\sum\limits_{\norm{\mathbf{k}}\leq K}\norm{\boldsymbol{\omega}
^{s\abs{\mathbf{k}}}(1+\abs{k
\cdot\boldsymbol{\omega}})\frac{\epsilon^{[[\mathbf{k}]]}}{\boldsymbol{\omega}
^{s\abs{\mathbf{k}}}}\hat{a}\zeta_j^{\mathbf{k}}}_0^2\Big)^{1/2}\\
&\big(\sum\limits_{\norm{\mathbf{k}}\leq K}\norm{\boldsymbol{\omega}
^{s\abs{\mathbf{k}}}b\mathbf{d}^{\mathbf{k}}}_0^2\big)^{1/2}\Big]\\
\leq& C\epsilon ^{N+1}.
\end{aligned}
\end{equation*}
The first statement of  \eqref{invariant1} is proved.

With the second formula of \eqref{du} and in a similar way, one gets
\begin{equation}
\begin{aligned}
&  \sum\limits_{\norm{\mathbf{k}}\leq K}\sum\limits_{|j|\leq M}'
\mathrm{i}j L^{-\mathbf{k}}_4\big(
\frac{1}{2}\big)\zeta_{-j}^{-\mathbf{k}}
 \frac{1}{h^2\phi_2(h^2\omega^2_j)}\tilde{L}_j^{\mathbf{k}}
\zeta_j^{\mathbf{k}}\\
=& \sum\limits_{\norm{\mathbf{k}}\leq K}\sum\limits_{|j|\leq M}'
\mathrm{i}jL^{-\mathbf{k}}_4\big(
\frac{1}{2}\big)\zeta_{-j}^{-\mathbf{k}} \Big(d_j^{\mathbf{k}}-
\sum\limits_{l\neq0}\nabla_{-j}^{-\mathbf{k}}\big(\mathcal{U}_l(\xi(t,\sigma))\big)\Big).
\end{aligned}
\label{almost 2-2}%
\end{equation}
A careful observation shows that the left-hand side of \eqref{almost
2-2} can be written as a total derivative of function $\epsilon
\mathcal{K}[\boldsymbol{\zeta},\boldsymbol{\eta}](\tau)$, which
yields
\begin{equation}
\begin{aligned}
& -\epsilon\frac{d}{d \tau}
\mathcal{K}[\boldsymbol{\zeta},\boldsymbol{\eta}](\tau)  =
\sum\limits_{\norm{\mathbf{k}}\leq K}\sum\limits_{|j|\leq M}'
\mathrm{i}jL^{-\mathbf{k}}_4\big(
\frac{1}{2}\big)\zeta_{-j}^{-\mathbf{k}} \Big(d_j^{\mathbf{k}}-
\sum\limits_{l\neq0}\nabla_{-j}^{-\mathbf{k}}\big(\mathcal{U}_l(\xi(t,\sigma))\big)\Big).
\end{aligned}
\label{almost 2-3}%
\end{equation}
For the first expression in the right-hand side of this formula, it
follows from the Cauchy--Schwarz inequality and the bound
$\abs{j}\leq\omega_j$ that
\begin{equation*}
\begin{aligned}
&\abs{\sum\limits_{\norm{\mathbf{k}}\leq K} \sum\limits_{\abs{j}
\leq K}'\mathrm{i}jz_{-j}^{-\mathbf{k}}  d_j^{\mathbf{k}}} \leq
\Big(\sum\limits_{\norm{\mathbf{k}}\leq K} \sum\limits_{\abs{j} \leq
K}'\omega_j^2\abs{ z_{j}^{\mathbf{k}}}^2\Big)^{1/2}
\Big(\sum\limits_{\norm{\mathbf{k}}\leq K} \sum\limits_{\abs{j} \leq
K}'
\abs{ d_{j}^{\mathbf{k}}}^2\Big)^{1/2}\\
&\leq C \epsilon \Big(\sum\limits_{\norm{\mathbf{k}}\leq K}
\sum\limits_{\abs{j} \leq
K}'\frac{\omega_j^2}{\boldsymbol{\omega}^{\abs{\mathbf{k}}}}
 \frac{\epsilon^{[[\mathbf{k}]]}}{\epsilon^2} \frac{\boldsymbol{\omega}^{\abs{\mathbf{k}}}}{\epsilon^{[[\mathbf{k}]]}}   \abs{ z_{j}^{\mathbf{k}}}^2\Big)^{1/2}
 \Big(\sum\limits_{\norm{\mathbf{k}}\leq K}
\sum\limits_{\abs{j} \leq K}' \abs{
d_{j}^{\mathbf{k}}}^2\Big)^{1/2}\leq C \epsilon^{N+2}.
\end{aligned}
\end{equation*}
The remaining expression of \eqref{almost 2-3} contains terms of the
form \begin{equation*}
\begin{aligned}
&\sum\limits_{\norm{\mathbf{k}}\leq K}\sum\limits_{|j|\leq M}'
\mathrm{i}jz^{-\mathbf{k}}_{-j}
 \nabla_{-j}^{-\mathbf{k}} \mathcal{U}_l(\xi(t,\sigma))\\
 =&\sum\limits_{m=2}^N\frac{U^{(m+1)}(0)}{m!}
\sum\limits_{\mathbf{k}^1+\cdots+\mathbf{k}^{m+1}=\mathbf{k}}\sum\limits_{j_1+\cdots+j_{m+1}=
  2Ml}'  z_{j_1}^{\mathbf{k}^1} \cdots
z_{j_m}^{\mathbf{k}^m}\cdot
\mathrm{i}j_{m+1}z_{j_{m+1}}^{\mathbf{k}^{m+1}},
\end{aligned}
\end{equation*}
which is the $2Ml$th Fourier coefficient of the function (see
\cite{Hairer08})
\begin{equation*}
\begin{aligned}
&w(x):=\sum\limits_{m=2}^N\frac{U^{(m+1)}(0)}{m!}
\sum\limits_{\mathbf{k}^1+\cdots+\mathbf{k}^{m+1}=\mathbf{k}}\mathcal{P}
z^{\mathbf{k}^1}(x) \cdots \mathcal{P}z^{\mathbf{k}^m}(x)\cdot
\frac{d}{dx} \mathcal{P}z^{\mathbf{k}^{m+1}}(x).
\end{aligned}
\end{equation*}
As shown in the proof of Theorem 5.2 of  \cite{Hairer08}, it can be
confirmed that
   $\norm{w}_{s-1}\leq C \epsilon^3$,
and  the $2Ml$th Fourier coefficient of $w$ is bounded by $C
\epsilon^3\omega^{-s+1}_{2Ml}\leq  C \epsilon^3(2Ml)^{-s+1}.$ In
this way, the second statement of  \eqref{invariant1}  is obtained
by   \eqref{almost 2-3}.

 $\bullet$ \textbf{Proof of \eqref{invariant2}.}
In what follows, only  the second statement of \eqref{invariant2} is
proved since the first one  can be obtained in a similar way.

By the scheme of AAVF method, it is obtained that
$$2h \textmd{sinc} (h\Omega)\tilde{\mathbf{p}}(t) =\tilde{\mathbf{q}}(t+h)-\tilde{\mathbf{q}}(t-h)+\mathcal{O}(h^2),$$
which gives $ \tilde{p}_j(t)=\mathrm{i}\omega_j\big(\eta_j^{\langle
j\rangle}(\epsilon t) \mathrm{e}^{\mathrm{i} \omega_j
t}-\eta_j^{-\langle j\rangle}(\epsilon t) \mathrm{e}^{-\mathrm{i}
\omega_j
t}\big)+\mathcal{O}(h\epsilon^2)+\mathcal{O}(h^3\epsilon^2)$.
Therefore, it is true that $\zeta_j^{\langle
j\rangle}=\frac{1}{2}\big(\tilde{q}_j+\frac{1}{\mathrm{i}\omega_j}\tilde{p}_j\big)+\mathcal{O}(\epsilon^2)$
and $\zeta_j^{-\langle
j\rangle}=\frac{1}{2}\big(\tilde{q}_j-\frac{1}{\mathrm{i}\omega_j}\tilde{p}_j\big)+\mathcal{O}(\epsilon^2).
$  With these results, the construction of  $\mathcal{K}$ is given
below:
\begin{equation*}
\begin{aligned}
 &\mathcal{K}[\boldsymbol{\zeta},\boldsymbol{\eta}](\tau)= \sum\limits_{|j|\leq M}' j\frac{1}{2}\frac{4\epsilon h \sin(\frac{1}{2}h \omega_j)
 \cos(\frac{1}{2}h \omega_j)}{2h^2\phi_2(h^2  \omega_j^2)} \Big(
 |\zeta_{j}^{\langle j\rangle}|^2- |\zeta_{j}^{-\langle j\rangle}|^2\Big)+\mathcal{O}(\epsilon^3)\\
 =& \sum\limits_{|j|\leq M}' j\omega_j \frac{ \cos(\frac{1}{2}h \omega_j)}{ \textmd{sinc}(\frac{1}{2}h \omega_j)}\Big(
 |\zeta_{j}^{\langle j\rangle}|^2- |\zeta_{j}^{-\langle
 j\rangle}|^2\Big)+\mathcal{O}(\epsilon^3)\\
 =& \sum\limits_{|j|\leq M}'\frac{ j\omega_j }{4}\frac{ \cos(\frac{1}{2}h \omega_j)}{ \textmd{sinc}(\frac{1}{2}h \omega_j)}\Big(
 |\tilde{q}_j+\frac{1}{\mathrm{i}\omega_j}\tilde{p}_j|^2-
 |\tilde{q}_j-\frac{1}{\mathrm{i}\omega_j}\tilde{p}_j|^2\Big)+\mathcal{O}(\epsilon^3)\\
= &  \sum\limits_{|j|\leq M}'\frac{ \cos(\frac{1}{2}h \omega_j)}{
\textmd{sinc}(\frac{1}{2}h \omega_j)}\frac{ j\omega_j }{4}4
\frac{1}{\mathrm{i}\omega_j} \tilde{q}_{-j}\tilde{p}_j
+\mathcal{O}(\epsilon^3) \\
=&
\hat{K}(\tilde{\mathbf{q}},\tilde{\mathbf{p}})+\mathcal{O}(\epsilon^3)+\mathcal{O}(\epsilon^2
M^{-s})   =
\hat{K}(\mathbf{q}_{n},\mathbf{p}_{n})+\mathcal{O}(\epsilon^3)+\mathcal{O}(\epsilon^2
M^{-s}),
\end{aligned}
\end{equation*}
where the results \eqref{error MFE} and \eqref{jth TMFE}  are used.
\hfill\end{proof}

\subsection{From short to long time intervals} \label{subsec:fro to}
It is noted that with the analysis presented in this paper, the
statement of Theorem \ref{main theo} can be proved by patching
together many intervals of length $\epsilon^{-1}$ in the same way as
that used in \cite{Cohen08-1,Cohen08}.

\section{Conclusions} \label{sec:conclusions}
In this paper,  we   have shown the long time behaviour of the AAVF
method when applied to semilinear wave equations via spatial
spectral semi-discretizations. This method can exactly preserve the
energy in the semi-discretisation and has a near-conservation of
modified actions and modified momentum over long times. The main
result is proved by developing modulated Fourier expansion of the
AAVF method and showing two almost-invariants  of the modulation
system.

The main result of this paper explains rigorously the good long-time
behaviour of EP methods in the numerical treatment of semilinear
wave equations. The  analysis for multi-dimensional wave equations
will be  considered. It is also noted that long-term analysis of
many different methods except EP methods has been given recently for
Schr\"{o}dinger equations and the reader is referred to
\cite{Cohen12,Gauckler17,Gauckler10,Gauckler10-1}. Our another work
will be devoted to the long-term analysis of energy-preserving
methods for solving  Schr\"{o}dinger equations. We are hopeful of
obtaining near-conservation of actions, momentum and density as well
as exact-conservation of energy for  some EP methods applied to
Schr\"{o}dinger equations.
\section*{Acknowledgements}
The authors are grateful to Professor Christian Lubich for his
helpful comments and discussions on the topic of modulated Fourier
expansions. We also thank him for drawing our attention to the
long-term  analysis of energy-preserving methods, which motives this
paper.

\end{document}